\newtheorem{Theorem}{Theorem}[section] 
\newtheorem{Definition}{Definition}[section] 
\newtheorem{Proposition}{Proposition}[section] 
\newtheorem{Lemma}{Lemma}[section] 
\newtheorem{Corollary}{Corollary}[section] 
\newtheorem{Remark}{Remark}[section] 
\newcommand{\rec}[1]{{(\ref{#1})}}
\def \R{\mathbb{R}} 
\def \C{\mathbb{C}}
\def \Z{\mathbb{Z}} 
\def\11{1\!\!1}
\newcommand{\de}{\partial}
\newcommand{\ve}{\varepsilon}
\newcommand{\weak}{ \stackrel{*}{\rightharpoonup}}
\newcommand{\arc}{\mathrm{arc}}
\newcommand{\M}{\mathcal}
\newcommand{\lapfr}{(-\Delta)^\frac{1}{2}}
\newenvironment{proof}{\noindent {\bf Proof.}}{\hfill$\square$\medskip}
\DeclareMathOperator{\sign}{sign}
\DeclareMathOperator{\loc}{loc}
\DeclareMathOperator*{\dist}{dist}
\def\XXint#1#2#3{{\setbox0=\hbox{$#1{#2#3}{\int}$ }
\vcenter{\hbox{$#2#3$ }}\kern-.6\wd0}}
\begin{document}
\title{The nonlocal Liouville-type equation in $\R$ and conformal immersions of the disk with boundary singularities}
\author{ Francesca Da Lio\thanks{Department of Mathematics, ETH Z\"urich, R\"amistrasse 101, 8092 Z\"urich, Switzerland.}  \and Luca Martinazzi\thanks{Department of Mathematics and Computer Science, Universit\"at Basel, Spiegelgasse 1, 4051 Basel, Switzerland. Supported by the Swiss National Foundation, Project n. PP00P2-144669.}}

\maketitle

\begin{abstract}
In this paper we perform a blow-up and quantization analysis of  the fractional Liouville  equation in dimension $1$. More precisely, given a sequence $u_k :\R \to \R$ of solutions to 
\begin{equation}
 (-\Delta)^\frac12 u_k =K_ke^{u_k}\quad \text{in }\R,\label{eqr}
\end{equation}
with $K_k$ bounded in $L^\infty$ and $e^{u_k}$ bounded in $L^1$ uniformly with respect to $k$, we show that up to extracting a subsequence $u_k$ can blow-up at (at most) finitely many points $B=\{a_1,\dots, a_N\}$ and either (i) $u_k\to u_\infty$ in $W^{1,p}_{\loc}(\R\setminus B)$ and $K_ke^{u_k}\weak K_\infty e^{u_\infty}+ \sum_{j=1}^N \pi \delta_{a_j}$, or (ii) $u_k\to-\infty$ uniformly locally in $\R\setminus B$ and $K_k e^{u_k}\weak \sum_{j=1}^N \alpha_j \delta_{a_j}$ with $\alpha_j\ge \pi$ for every $j$. This result, resting on the geometric interpretation and analysis of \eqref{eqr} provided in a recent collaboration of the authors with T. Rivi\`ere and on a classical work of Blank about immersions of the disk into the plane, is a fractional counterpart of the celebrated works of Br\'ezis-Merle and Li-Shafrir on the $2$-dimensional Liouville equation, but providing sharp quantization estimates ($\alpha_j=\pi$ and $\alpha_j\ge \pi$) which are not known in dimension $2$ under the weak assumption that $(K_k)$ be bounded in $L^\infty$ and is allowed to change sign.
\end{abstract}

\section{Introduction}
The compactness properties of the Gauss equation on a surface and the Liouville equation in $\R^2$ are important tools in prescribing the Gaussian curvature on a surface, a very famous problem. Building on a previous work of the authors with T. Rivi\`ere \cite{DMR}, we study the compactness properties of the fractional Liouville equation $\lapfr u=Ke^u$ in $\R$ under fairly weak and natural geometric assumptions.

Let us first recall some classical result. If $(\Sigma, g_0)$  is a smooth, closed Riemann surface with Gauss curvature $K_{g_0}$, for any $u\in C^\infty$ the conformal metric $g_u:= e^{2u}g$ has Gaussian curvature $K$ determined by the Gauss equation:
\begin{equation}\label{ld2}
-\Delta_{g_0} u=K e^{2u}-K_{{g_0}}\quad \text{on } \Sigma,
\end{equation}
where $\Delta_{g_0}$ is the Laplace-Beltrami operator on $(\Sigma, g_0)$, (see e.g. \cite{ch} for more details).
\par
In particular when $\Sigma =\Omega\subset \R^2$ or $\Sigma=S^2$ the equation \rec{ld2} reads respectively
\begin{equation}\label{ld2r}
-\Delta u=K e^{2u}  \quad \text{in }\Omega\subset\R^2
\end{equation}
and
\begin{equation}\label{ld2s}
-\Delta_{S^2} u=K e^{2u}-1,\quad \text{on } S^2\,.
\end{equation}\par
Both equations \rec{ld2r} and \rec{ld2s} have been largely studied in the literature. For what concerns e.g. the compactness properties of \eqref{ld2r}, in a seminal work H. Br\'ezis and F. Merle \cite{BM} showed among other things the following blow-up behavior:

\begin{Theorem}[Thm 3, \cite{BM}]\label{thbm}
Given an open subset of $\Omega$ of $\R^2$, assume that $(u_k)\subset L^1_{\loc}(\Omega)$ is a sequence of weak solutions to \rec{ld2r} with $K=K_k\ge 0$ and such that for some $1<p\le  \infty$
$$\|K_k\|_{L^p}\le \bar \kappa\,,\quad  \|e^{2u_k}\|_{L^{p'}}\le \bar A\,.$$
Then up to subsequences either
\begin{enumerate}
\item $u_k$ is bounded in $L^{\infty}_{\loc}(\Omega)$, or\par
\item there is a finite (possibly empty) set $B=\{x_1,\ldots,x_N\}\subset\Omega$  (the blow-up set) such that
$u_k(x)\to -\infty$ locally uniformly in $\Omega\setminus B$, and
$$K_k e^{2u_k}\stackrel{*}{\rightharpoonup} \sum_{i=1}^N\alpha_i\delta_{x_i} \quad\text{for some numbers }\alpha_i\ge{\frac{2\pi}{p'} }\,.$$
\end{enumerate}
\end{Theorem}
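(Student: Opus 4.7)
My plan is to follow the original argument of Br\'ezis--Merle, articulated around three ingredients: the defect measure of $K_k e^{2u_k}$, the sharp Moser--Trudinger-type inequality of Br\'ezis--Merle for Newtonian potentials of $L^1$ data, and Harnack's inequality for harmonic functions.

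First, since $\mu_k := K_k e^{2u_k}$ is bounded in $L^1_{\loc}(\Omega)$ by H\"older, I extract a subsequence with $\mu_k\weak\mu$ as Radon measures, and I set
\[
 B := \Big\{ x \in \Omega : \mu(\{x\}) \ge \tfrac{2\pi}{p'}\Big\}.
\]
Local finiteness of $\mu$ forces $B$ to be locally finite (hence finite on each compact subset). For $x_0 \in \Omega \setminus B$, continuity of $\mu$ along the decreasing family $\overline{B_r(x_0)}$ provides $r>0$ with $\mu(\overline{B_r(x_0)}) < \tfrac{2\pi}{p'}$, so for $k$ large $\|\mu_k\|_{L^1(B_r(x_0))} \le \tfrac{2\pi}{p'} - \delta$ for some $\delta > 0$.

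Next, I decompose $u_k = v_k + w_k$ in $B_r(x_0)$, with $v_k$ solving $-\Delta v_k = \mu_k$ with zero Dirichlet data and $w_k$ harmonic. The Br\'ezis--Merle inequality
\[
 \int_{B_r(x_0)} \exp\!\Big( \tfrac{(4\pi - \varepsilon)\, v_k}{\|\mu_k\|_{L^1(B_r)}}\Big)\,dx \le C(\varepsilon)\, r^2
\]
together with the previous bound then yields, for $\varepsilon$ small, that $e^{2v_k}$ is bounded in $L^{p'(1+\eta)}(B_r(x_0))$ for some $\eta > 0$. Because $\mu_k \ge 0$ implies $v_k \ge 0$, one has $e^{2w_k} \le e^{2u_k}$, so $e^{2w_k}$ is bounded in $L^{p'}$; then the mean-value property of the harmonic $w_k$ combined with Jensen's inequality yields an $L^\infty_{\loc}$ upper bound on $w_k$. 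Joining the two estimates, $e^{2u_k}$ is bounded in $L^{p'(1+\eta)}_{\loc}(\Omega \setminus B)$, hence by H\"older $K_k e^{2u_k}$ is bounded in $L^{1+\eta'}_{\loc}$, and Calder\'on--Zygmund plus Sobolev embedding in dimension two give $v_k \in L^\infty_{\loc}(\Omega\setminus B)$.

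To close, I would invoke Harnack's inequality on the nonnegative harmonic function $C - w_k$ in each connected component of $\Omega \setminus B$: either $w_k$ (and therefore $u_k$) is uniformly bounded in $L^\infty_{\loc}$, which combined with $B=\emptyset$ gives case~(1); or $w_k \to -\infty$ locally uniformly, so $u_k \to -\infty$ locally uniformly on $\Omega \setminus B$. In the latter case $K_k e^{2u_k} \to 0$ in $L^1_{\loc}(\Omega \setminus B)$, whence $\mu$ is supported on $B$, i.e.\ $\mu = \sum_{i=1}^N \alpha_i \delta_{x_i}$ with $\alpha_i \ge \tfrac{2\pi}{p'}$ by the very definition of $B$. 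Whenever $B \ne \emptyset$ the blow-up of $u_k$ near points of $B$ excludes case (1), so one lands in case (2). The principal technical obstacle is the sharp constant $4\pi$ in the Br\'ezis--Merle inequality: it is precisely this sharpness that yields the critical mass threshold $\tfrac{2\pi}{p'}$ in the statement, and any loss would produce a weaker lower bound on the $\alpha_i$.
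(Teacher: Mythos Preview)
The paper does not give its own proof of Theorem~\ref{thbm}: this result is quoted from Br\'ezis--Merle \cite{BM} purely as background and motivation for the fractional analogues (Theorems~\ref{trm2} and~\ref{trm3}) that constitute the paper's actual contributions, so there is no in-paper argument to compare against.

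That said, your outline is a faithful sketch of the original Br\'ezis--Merle proof: extraction of the defect measure, definition of the blow-up set via the threshold $2\pi/p'$, the local splitting $u_k=v_k+w_k$ into a Newtonian potential of $\mu_k$ plus a harmonic remainder, the sharp exponential-integrability inequality of \cite{BM} to control $e^{2v_k}$ in $L^{p'(1+\eta)}$, and the Harnack dichotomy for $w_k$. One minor point of presentation: the decomposition $u_k=v_k+w_k$ is defined ball-by-ball, not globally on $\Omega\setminus B$, so the Harnack step is more accurately phrased as a covering/propagation argument transferring the dichotomy from $w_k$ on small balls to $u_k$ on connected components of $\Omega\setminus B$; this is routine and does not affect the correctness of your sketch.
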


Here it is important to notice the requirement that $K_k\ge 0$.  Particularly interesting is the case $p=\infty$, in which case Theorem \ref{thbm} tells us that the amount of concentration of curvature $\alpha_i$ at each blow-up point is at least $2\pi$, which is \emph{half} of the total curvature of $S^2$. On the other hand, as shown by Y-Y. Li and I. Shafrir \cite{LS}, if one assumes that $K_k\to K_\infty$ in $C^0(\Omega)$, then a stronger and subtle quantization result holds, namely $\alpha_i$ is an integer multiple of $4\pi$. This result was then extended to higher even dimension $2m$ in the context of $Q$-curvature and GJMS-operators by several authors \cite{DR,mal, mar3,ndi,rob,mar4}, always under the strong assumption that the curvatures are continuous and converge in $C^0$ (sometimes even in $C^1$), but, at least in \cite{mar3,mar4} giving up the requirement that the curvatures are non-negative. The main ingredient here is that for uniformly continuous curvatures, in case of blow-up one can dilate the metrics and reduce to the case of constant curvature, proving that the curvature at a blow-up point is necessarily positive (by results of \cite{mar2,mar3}). Finally positive and constant curvature leads to spherical metrics, thanks to various classification results (e.g. \cite{CL, lin1, mar1}), and this is in turn responsible for the constant $4\pi$ (or higher-dimensional analogs) in the quantization results, i.e. at each blow-up point the total curvature of a sphere (of the appropriate dimension) concentrates.

It is natural to ask what happens if we remove \emph{both} the positivity and uniform continuity assumptions on the curvature, only relying on an arbitrary $L^\infty$ bound. We will address this question in dimension $1$, where the analog of \eqref{ld2r} is
$$(-\Delta)^\frac12 u= Ke^u,\quad \text{in }\R,$$
whose geometric interpretation will play a crucial role in having a precise understanding of the blow-up behaviour. The definitions of $L_\frac{1}{2}(\R)$ and $\lapfr$ will be given in the appendix.

\begin{Theorem}\label{trm2} Let $(u_k)\subset L_\frac12(\R)$ be a sequence of solutions to 
\begin{equation}\label{equk}
(-\Delta)^\frac{1}{2}u_k=K_ke^{u_k}\quad \text{in } \R{}
\end{equation}
and assume that for some $\bar\kappa,\bar L>0$ and for every $k$ it holds
\begin{equation}\label{boundKL}
\|K_k\|_{L^\infty}\le \bar \kappa,\quad \|e^{u_k}\|_{L^{1}}\le \bar L.
\end{equation}
Up to a subsequence assume that $K_k\stackrel{*}{\rightharpoonup}  K_{\infty}$ in $L^{\infty}(\R)$,  $K_ke^{u_k} \rightharpoonup \mu $ as Radon measures. 
Then there exists a finite (possibly empty) set $B:=\{x_1,\ldots, x_N\}\subset \R$ such that, up to extracting a further subsequence, one of the following alternatives holds:
\begin{enumerate}
\item $u_k\to u_\infty$ in $W^{1,p}_{\loc}(\R\setminus B)$ for $p<\infty$, where 
 \begin{equation} \label{liouvlimit}
(-\Delta)^\frac12 u_\infty=\mu= K_{\infty} e^{u_\infty}+ \sum_{i=1}^{N}\pi  \delta_{x_i}\quad \text{in }\R
\end{equation}
(compare to Fig. \ref{Fcase1}).
\item $u_k\to -\infty$ locally uniformly in $\R\setminus B$ and
$$\mu =\sum_{j=1}^N \alpha_j \delta_{x_j}$$
for some $\alpha_j\ge \pi$, $1\le j\le N$ (compare to Fig. \ref{Fpinch}).
\end{enumerate}
\end{Theorem}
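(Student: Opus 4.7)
The plan is to follow the Br\'ezis--Merle blow-up strategy in the 1D fractional setting, with the sharp quantization obtained from the geometric interpretation of \eqref{eqr} developed in \cite{DMR} together with Blank's classification of disk immersions with boundary singularities.

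First I would establish a fractional analog of the Br\'ezis--Merle inequality. Since the fundamental solution of $\lapfr$ on $\R$ is $\frac{1}{\pi}\log\frac{1}{|x|}$, a function $v$ solving $\lapfr v=f$ on a bounded interval $I$ with controlled tails satisfies, via Jensen's inequality, an estimate of the form
$$\int_{I}\exp\!\left(\frac{(\pi-\delta)|v|}{\|f\|_{L^1(I)}}\right)\,dx\le C(\delta,|I|)\qquad\text{for every }\delta\in(0,\pi).$$
The threshold $\pi$ is precisely the constant appearing in the statement of the theorem, and it plays the role of $4\pi$ in the 2D Br\'ezis--Merle inequality.

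Next, passing to a subsequence so that $K_ke^{u_k}\rightharpoonup\mu$ as Radon measures with $\mu(\R)\le\bar\kappa\bar L$, I would define the blow-up set
$$B:=\{x\in\R:\mu(\{x\})\ge\pi\},$$
which is automatically finite with $\#B\le \bar\kappa\bar L/\pi$. For $x_0\notin B$ pick a small interval $I\ni x_0$ with $\int_I|K_k|e^{u_k}\,dx<\pi-\eta$ for some $\eta>0$ and $k$ large. Splitting $u_k=v_k+h_k$, where $\lapfr v_k=K_ke^{u_k}\chi_I$ with suitable decay at infinity and $\lapfr h_k=0$ on $I$, the displayed inequality gives a uniform $L^q(I)$ bound on $e^{v_k}$ for some $q>1$, while interior regularity for $\lapfr$-harmonic functions together with the $L^1$ bound on $K_ke^{u_k}$ controls $h_k$ locally from above. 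Hence $e^{u_k}$ is bounded in $L^q_{\loc}(\R\setminus B)$ and $u_k$ in $W^{1,p}_{\loc}(\R\setminus B)$ for some $p>1$.

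On $\R\setminus B$ the dichotomy then follows standardly: either $u_k$ stays bounded locally (case (1)), giving, after extraction, convergence to a limit $u_\infty$ satisfying \eqref{liouvlimit} up to identification of the atomic weights; or $u_k\to-\infty$ locally uniformly (case (2)), so that $e^{u_k}\to 0$ off $B$ and $\mu$ is purely atomic. It remains to identify the weights $\alpha_j$. Here the geometric input from \cite{DMR} enters: each $u_k$ is the log-modulus of the boundary derivative of a conformal immersion $F_k:D^2\to\R^2$ whose boundary curve has geodesic curvature $K_k$. At $x_j\in B$, rescaling the immersion $F_k$ at an appropriate scale produces a bubble solution $\tilde u$ on $\R$ corresponding to a boundary curve which, by Blank's classification of how a planar immersion of $D^2$ can degenerate on its boundary, must either close up into a $C^1$ arc or develop an isolated cusp of angle at least $\pi$. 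This gives $\alpha_j\ge\pi$ in case (2). In case (1), the smoothness of $u_\infty$ on $\R\setminus\{x_j\}$ forces the local singularity to be an elementary cusp of angle exactly $\pi$, yielding $\alpha_j=\pi$.

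The main obstacle is the sharp equality $\alpha_j=\pi$ in alternative (1): the Br\'ezis--Merle threshold alone only delivers $\alpha_j\ge\pi$, and a priori higher integer multiples of $\pi$ could occur (as is the situation in 2D under only $L^\infty$ control of $K_k$, where no such sharp quantization is known). Ruling them out requires carefully matching the local bubble analysis with Blank's combinatorial enumeration of boundary singularities of bounding disk immersions; this is the technical core of the argument and the genuinely new ingredient compared to the classical work of Br\'ezis--Merle.
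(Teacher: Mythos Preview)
Your overall architecture (fractional Br\'ezis--Merle inequality, blow-up set, dichotomy) is fine and matches the paper, but the quantization step has a genuine gap.

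First a small but real issue: you define $B=\{x:\mu(\{x\})\ge\pi\}$ with $\mu$ the weak limit of $K_ke^{u_k}$, then claim that for $x_0\notin B$ you can find $I\ni x_0$ with $\int_I|K_k|e^{u_k}<\pi-\eta$. This does not follow, because $K_k$ changes sign and cancellations in $\mu$ do not prevent $|K_k|e^{u_k}$ from concentrating. The blow-up set must be defined via the concentration of $|K_k|e^{u_k}$ (this is what the paper does), and then the signed atoms $\alpha_j=\mu(\{x_j\})$ are a priori completely uncontrolled --- they could even be negative. In particular your sentence ``the Br\'ezis--Merle threshold alone only delivers $\alpha_j\ge\pi$'' is incorrect: with sign-changing $K_k$ the threshold gives nothing about $\alpha_j$, and proving $\alpha_j\ge\pi$ in case~(2) is already the main difficulty, not just the equality in case~(1).

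Second, and more seriously, your proposed mechanism for the quantization --- ``rescaling the immersion $F_k$ at an appropriate scale produces a bubble solution'' --- is exactly what the paper says does \emph{not} work: there can be infinitely many scales contributing nontrivial curvature, so a single bubble extraction fails. The paper's argument is a two-sided one that bootstraps between the two cases. In case~(1) (nontrivial limit $\Phi_\infty$), one takes a fixed $C^2$ Jordan curve $\Delta$ in $\bar D^2$ through $a_j$; since $\Phi_k\circ\Delta$ extends to an immersion of the disk it has rotation index $1$, and passing to the limit the curve $\Phi_\infty\circ\Delta$ has a single corner at $a_j$ and extends to an immersion with one boundary singularity, so by the Blank-type result (rotation index $\ge 1$) one reads off $\alpha_j\le\pi$. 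In case~(2) ($\Phi_k\to$ constant) the paper performs a \emph{pinching analysis}: the arc-length reparametrized curves $\gamma_k$ converge to a closed curve $\gamma_\infty$, and one identifies $\alpha_j$ with an angle increment $\varphi_\infty(p_j)-\varphi_\infty(p_{j-1})$ between pinched points. One then finds, via the equivalence classes of $S^1\setminus\mathcal P$, a sequence of M\"obius maps $f_k$ so that $\Phi_k\circ f_k$ falls into case~(1); applying the inequality $\tilde\alpha_1\le\pi$ just obtained and the relation $\alpha_j=2\pi-\tilde\alpha_1$ yields $\alpha_j\ge\pi$. Finally, equality $\alpha_j=\pi$ in case~(1) is obtained by composing with M\"obius maps concentrating slowly at $a_j$, which pushes the situation into case~(2) without changing $\alpha_j$, whence $\alpha_j\ge\pi$ as well.

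So the missing ingredients in your sketch are: (i) the upper bound $\alpha_j\le\pi$ in case~(1) via the rotation-index argument on a fixed test curve (no rescaling), and (ii) the pinching/equivalence-class machinery that selects the right M\"obius rescaling and feeds case~(2) back into case~(1). Blank's work enters only through the statement ``rotation index $\ge 1$'' for immersions of $\bar D^2$ with finitely many boundary singularities, not through any classification of cusps.
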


Let us compare the above theorem with the result of Br\'ezis and Merle. The cost to pay for allowing $K_k$ to change sign is that even in case 1, in which $u_k$ has a non-trivial weak-limit, there can be blow-up, and in this case a \emph{half-quantization} appears: the constant $\pi$ in \eqref{liouvlimit} is half of the total-curvature of $S^1$. In case 2, instead we are able to recover the analogue of case 2 in the Br\'ezis-Merle theorem. On the other hand, the proof is now much more involved, as near a blow-up point regions of negative and positive curvatures can (and in general do) accumulate, and one needs a way to take into account the various cancelations. A direct blow-up approach does not seem to work because there can be infinitely many scales at which non-trivial contributions of curvature appear. In general it would be easy to prove that
$$|K_k|e^{u_k}\weak \sum_{j=1}^N \alpha_j \delta_{x_j}$$
for some $\alpha_j\ge \pi$, but removing the absolute values we need to prove that there is ``more'' positive than negative curvature concentrating at each blow-up point. This is turn will be reduced to a theorem of differential topology about the degree of closed curves in the plane, inspired by a classical work of S. J. Blank \cite{bla}, and to the blow-up analysis provided in \cite{DMR}, which will allow us to choose a suitable blow-up scale and estimate the curvature left in the other scales.

Things simplify and the above theorem can be sharpened if we assume that $K_k\ge 0$, hence falling back into a statement of Br\'ezis-Merle type.

\begin{figure}
\begin{center}
\includegraphics[width=16cm]{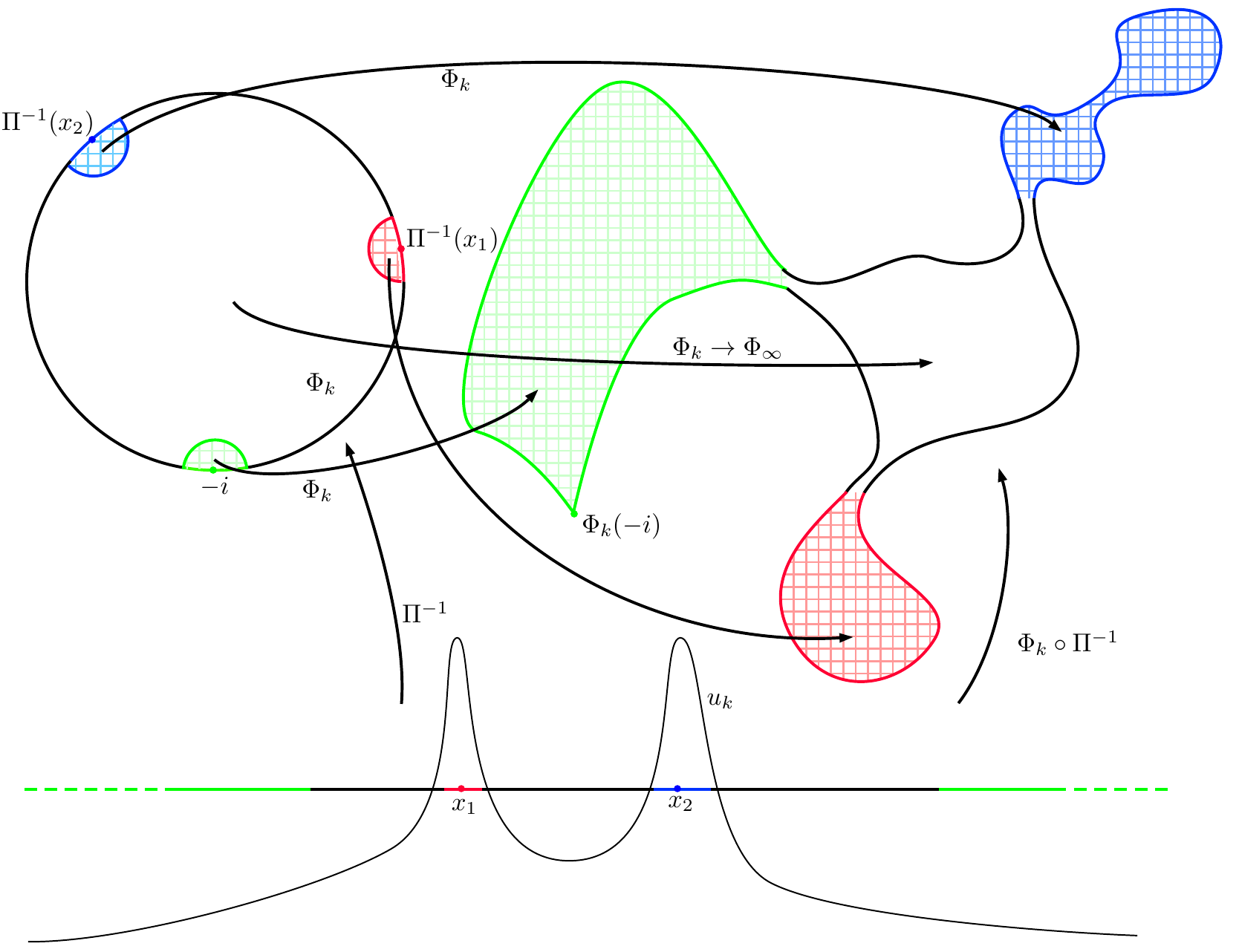}
 
\end{center}
\caption{\small The case 1 of Theorem \ref{trm2} with $N=2$, in the interpretation given by Theorem \ref{geomequiv}. $\Pi^{-1}$ is the inverse of the stereographic projection. From the function $u$ blowing up at $a_1$ and $a_2$ (and possibly at infinity, in the sense that some curvature vanishes at infinity) on construct $\Phi_k : \bar D^2\to \C$ blowing up at $a_1=\Pi^{-1}(x_1)$, $a_2=\Pi^{-1}(x_2)$ and possibly $-i$, but converging to an immersion $\Phi_\infty$ away from $\{a_1,a_2,-i\}$.}\label{Fcase1}
\end{figure}

\begin{Theorem}\label{trm3}
Let $(u_k)$ and $(K_k)$ be as in Theorem \ref{trm2} and additionally assume that $K_k\ge 0$.
Then up to a subsequence in case 1 of Theorem \ref{trm2} we have $N=0$
and in case 2 we have $\alpha_j>\pi$ for $1\le j\le N$.
\end{Theorem}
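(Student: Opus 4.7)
The plan is to argue both claims by contradiction, using the blow-up analysis and the classification of finite-mass entire solutions of $\lapfr v=\kappa e^v$ on $\R$ from \cite{DMR}.

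For the first claim ($N=0$ in case 1), suppose a blow-up point $x_j\in B$ exists, so that $K_ke^{u_k}\weak K_\infty e^{u_\infty}+\pi\delta_{x_j}+\cdots$. The blow-up analysis of \cite{DMR}, which already underlies Theorem \ref{trm2}, provides sequences $x_k\to x_j$ and $r_k\to 0$ such that the rescaled functions
\[
v_k(y):=u_k(x_k+r_ky)+\log r_k
\]
satisfy $v_k(0)=O(1)$, $\lapfr v_k=\widetilde K_ke^{v_k}$ with $\widetilde K_k(y):=K_k(x_k+r_ky)\in[0,\bar\kappa]$, $\|e^{v_k}\|_{L^1(\R)}\le\bar L$, and converge up to subsequence in $W^{1,p}_\loc(\R)$ to a non-trivial entire solution $v_\infty$ of $\lapfr v_\infty=\kappa e^{v_\infty}$ for some constant $\kappa\ge 0$. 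When $\kappa>0$, the classification in \cite{DMR}, combined with the geometric identification of $v_\infty$ as the boundary trace of a conformal immersion of the disk whose image has non-negative geodesic curvature, gives
\[
\int_\R \kappa e^{v_\infty}\,dy\ge 2\pi,
\]
reflecting that the total turning of a closed planar curve with non-negative curvature is a positive multiple of $2\pi$. Transferring this mass back to the original scale yields a contribution of at least $2\pi$ to the concentration at $x_j$, contradicting the value $\pi$ dictated by case 1. When $\kappa=0$, one iterates the blow-up at a different scale using the bubble-tree decomposition of \cite{DMR} until a non-trivial $\kappa>0$ profile is extracted; since the mass carried by each scale is non-negative under $K_k\ge 0$, the total accumulated mass only grows, and the same contradiction arises.

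The second claim ($\alpha_j>\pi$ in case 2) is proved by exactly the same scheme: the rescaling around $x_j$ produces a non-trivial profile of total mass at least $2\pi$, so $\alpha_j\ge 2\pi>\pi$. The only difference between case 1 and case 2 is the behavior of $u_k$ far from the blow-up set ($u_k\to u_\infty$ versus $u_k\to-\infty$), which is irrelevant for the local analysis at $x_j$.

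The main obstacle is making the bubble-tree iteration of \cite{DMR} produce a non-trivial $\kappa>0$ limit when the first-scale profile is degenerate ($\kappa=0$, i.e., $K_k$ vanishing at the blow-up scale faster than $1/r_k$). The essential point, which makes the difference with Theorem \ref{trm2}, is that the classification in \cite{DMR} excludes under the assumption $K_k\ge 0$ the ``half-turn'' profile of total mass exactly $\pi$ that may occur in the sign-changing setting. Geometrically, such a profile corresponds to a conformal immersion whose boundary executes only half a turn of $S^1$, which requires a boundary point of negative geodesic curvature --- impossible when $K_k\ge 0$. Consequently every bubble contributes mass at least $2\pi$, yielding both $N=0$ in case 1 and $\alpha_j>\pi$ in case 2.
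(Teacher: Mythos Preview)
Your argument has a genuine gap at its core. You assert that the rescaled limit $v_\infty$ solves $\lapfr v_\infty=\kappa e^{v_\infty}$ with a \emph{constant} $\kappa\ge 0$, and then invoke the constant-curvature classification to get mass $\ge 2\pi$ per bubble. But $K_k$ is only bounded in $L^\infty$ and converges weak-$*$; there is no reason the rescaled curvatures $\widetilde K_k(y)=K_k(x_k+r_ky)$ should converge to a constant. The paper's introduction explicitly flags this: the reduction to constant curvature is the mechanism behind Li--Shafrir type quantization, and it requires $K_k\to K_\infty$ in $C^0$, an assumption deliberately \emph{not} made here. So your $\ge 2\pi$ claim is unjustified, and with it the whole argument collapses (your case-2 conclusion $\alpha_j\ge 2\pi$ is in fact stronger than what the paper proves or claims). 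The bubble-tree iteration you sketch to handle the degenerate $\kappa=0$ scale is also left vague, and the paper warns precisely against this: ``A direct blow-up approach does not seem to work because there can be infinitely many scales at which non-trivial contributions of curvature appear.''

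What would salvage the idea is to drop the constant-$\kappa$ claim and use instead Theorem~\ref{thmpi} of \emph{this} paper: any entire solution with variable $K\ge 0$ and $e^u\in L^1$ has total mass strictly greater than $\pi$. One non-trivial bubble then already gives $\alpha_j>\pi$ (all other contributions being non-negative), contradicting $\alpha_j=\pi$ in case~1. But you still owe the existence of a non-trivial bubble, and the correct tool for that in this setting is not a direct dilation on $\R$ but the M\"obius rescaling of the associated holomorphic immersion (Proposition~\ref{eqclbis}). The paper's own proof bypasses all of this: for $K_k\ge 0$ it argues directly on the boundary curve, showing (end of Lemma~\ref{lemmagepi}) that an almost-closed $W^{2,\infty}$ arc of non-negative bounded curvature and bounded length must turn by at least $\pi+C(\bar L,\bar\kappa)$, via Arzel\`a--Ascoli and a compactness contradiction. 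Case~1 is then reduced to case~2 by composing $\Phi_k$ with M\"obius maps concentrating at $a_j$, so that the resulting $\tilde\alpha_j>\pi$ contradicts the upper bound $\alpha_j\le\pi$ of Lemma~\ref{lemmaleqpi}.
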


The proofs of Theorems \ref{trm2} and \ref{trm3} are strongly based on the following geometric interpretation of Equation \eqref{equk}.

\begin{Theorem}\label{geomequiv} Let  $u\in L_\frac12(\R)$ with $e^u\in L^1(\R)$ satisfy
\begin{equation}\label{liou5int}
(-\Delta)^\frac12 u=K e^u\quad \text{in }\R
\end{equation}
for some function $K \in L^{\infty}(\R)$. Then there exists $\Phi\in C^0(\bar D^2,\C)$ with $\Phi|_{S^1}\in W^{2,p}_{\loc}(S^1\setminus\{-i\},\C)$ for every $p<\infty$ such that $\Phi$ is a holomorphic immersion of $\bar D^2\setminus \{-i\}$ into $\C$,
\begin{equation}\label{cond1bis}
|\Phi'(z)|= \frac{2}{1+\Pi(z)^2} e^{u(\Pi(z))},\quad \text{for } z\in S^1\setminus\{-i\},
\end{equation}
and the curvature of the curve $\Phi|_{S^1\setminus\{-i\}}$ is $\kappa:=K\circ\Pi$, where $\Pi: S^1\setminus\{-i\}\to \R$ is the stereographic projection given by $\Pi(z)=\frac{\Re z}{1+\Im z}$.
\end{Theorem}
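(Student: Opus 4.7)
The plan is to reduce the construction to a boundary value problem on $S^1$ and solve it by integrating a holomorphic function on $D^2$. I would first define the stereographic lift
$$w(z):=u(\Pi(z))+\log\frac{2}{1+\Pi(z)^2},\qquad z\in S^1\setminus\{-i\},$$
so that $e^{w(z)}=\frac{2}{1+\Pi(z)^2}e^{u(\Pi(z))}$ is exactly the desired conformal factor. Using the conformal covariance of the one-dimensional half-Laplacian under stereographic projection (essentially worked out in \cite{DMR}), the equation $(-\Delta)^{1/2}u=Ke^u$ on $\R$ becomes the $S^1$-analogue of the Gauss equation
$$\sqrt{-\Delta_{S^1}}\,w=(K\circ\Pi)\,e^{w}-1\qquad\text{on }S^1\setminus\{-i\},$$
and $e^w\in L^1(S^1)$ by change of variables from $e^u\in L^1(\R)$.

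Because $D^2$ is simply connected, I would then let $W$ be the Poisson extension of $w$ to $D^2$, pick a harmonic conjugate $W^*$ normalized by $W^*(0)=0$, and set $h:=W+iW^*$ holomorphic on $D^2$. Define
$$\Phi(z):=\int_0^z e^{h(\zeta)}\,d\zeta,$$
which is holomorphic on $D^2$ with $\Phi'=e^h$ nonvanishing, hence an immersion wherever it is $C^1$. By construction $|\Phi'|=e^w$ on $S^1\setminus\{-i\}$, as required. The integrability $\int_{S^1}e^w\,d\theta<\infty$ forces $\Phi'\in H^1(D^2)$, so by the boundary theory of Hardy-space functions $\Phi$ extends continuously up to $\bar D^2$, including the point $z=-i$.

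The regularity $\Phi|_{S^1}\in W^{2,p}_{\loc}(S^1\setminus\{-i\})$ follows by bootstrap: $u\in W^{1,p}_{\loc}(\R)$ by standard regularity for the equation $(-\Delta)^{1/2}u=Ke^u$ with locally bounded right-hand side, hence $w\in W^{1,p}_{\loc}(S^1\setminus\{-i\})$, hence $\Phi'=e^h\in W^{1,p}_{\loc}$ up to the boundary away from $-i$, hence $\Phi|_{S^1}\in W^{2,p}_{\loc}$. The curvature formula is a direct computation: the tangent vector $ie^{i\theta}e^{h(e^{i\theta})}$ to $\Phi|_{S^1}$ has magnitude $e^{w(\theta)}$ and argument $\theta+\pi/2+W^*(e^{i\theta})$, so the signed curvature is
$$\kappa=\frac{1+\partial_\theta W^*|_{r=1}}{e^w}=\frac{1+\sqrt{-\Delta_{S^1}}\,w}{e^w}=K\circ\Pi,$$
where the middle equality combines the polar Cauchy--Riemann relation $\partial_\theta W^*|_{r=1}=\partial_r W|_{r=1}$ with the fact that the Dirichlet-to-Neumann operator on $D^2$ equals $\sqrt{-\Delta_{S^1}}$, and the last equality uses the transferred equation for $w$.

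The main obstacle is the first step, deriving the Gauss-type equation for $w$ on $S^1$ from the original equation for $u$ on $\R$; this requires the conformal transformation law for the nonlocal operator $(-\Delta)^{1/2}$ in one dimension and is handled using the tools of \cite{DMR}. A secondary technical point is the behavior at $z=-i$: there $\Pi(z)\to\infty$ and $w$ may have a logarithmic-type singularity, but the $L^1$-bound on $e^w$ controls the $H^1$ boundary behavior of $\Phi'$ and gives the continuous extension of $\Phi$ at $-i$ (although $\Phi$ is in general only continuous, not $C^1$, at that point).
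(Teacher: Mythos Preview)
Your overall architecture matches the paper's: lift via $\Pi$ to get $\lambda\in L^1(S^1)$ (your $w$), form the holomorphic extension of $\lambda+i\mathcal H(\lambda)$ (your $W+iW^*$), exponentiate, integrate, and compute the curvature via $\partial_\theta\mathcal H(\lambda)=(-\Delta)^{1/2}\lambda$. One point to flag: by Proposition~\ref{proppull2} the transferred equation on all of $S^1$ reads $(-\Delta)^{1/2}\lambda=\kappa e^\lambda-1+\bigl(2\pi-\int_\R Ke^u\,dx\bigr)\delta_{-i}$; your restriction to $S^1\setminus\{-i\}$ is correct, but the paper uses this Dirac term explicitly, subtracting $\tfrac{\beta}{2\pi}\log(1+\sin\theta)$ to isolate the singularity before invoking regularity. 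Your bootstrap on $\R$ is a valid alternative, though ``locally bounded right-hand side'' is premature---a priori $Ke^u$ is only $L^1$, and one needs an exponential-integrability step (of Theorem~\ref{MT4} type) before reaching $e^u\in L^p_{\loc}$ and hence $u\in W^{1,p}_{\loc}$; the paper is equally terse here. The one genuine difference is your treatment of continuity at $-i$: the paper shows by hand that $(\Phi(z_k))$ is Cauchy along carefully chosen radial-and-arc paths approaching $-i$, whereas you observe (via Jensen applied to the Poisson representation of $W$) that $e^w\in L^1(S^1)$ forces $\Phi'=e^h\in H^1(D^2)$, after which the classical theorem that $F'\in H^1$ implies $F\in C^0(\bar D^2)$ finishes at once. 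This Hardy-space route is cleaner than the paper's explicit path argument and yields the same conclusion.
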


Theorem \ref{geomequiv} in turn is an extension of the following result proven in \cite{DMR}:

\begin{Theorem}[\cite{DMR}]\label{geomequiv0} 
A function  $\lambda\in  L^1(S^1,\C)$ with $L:=\|e^{\lambda}\|_{L^1(S^1)}<\infty$  satisfies 
\begin{equation} \label{liouvfracn1}
(-\Delta)^\frac12 \lambda=\kappa e^{\lambda}-1\quad \text{in }S^1
\end{equation}
for some function $\kappa\in L^{\infty}(S^1)$, if and only if there exists  a closed curve 
$\gamma\in W^{2,\infty}(S^1,\C)$, with   $|\dot\gamma|\equiv\frac{L}{2\pi}$, a holomorphic  immersion $\Phi\colon \bar D^2\to \C$ a diffeomorphism $\sigma\colon S^1\to S^1$    ,  such that $\Phi\circ\sigma(z)=\gamma(z)$ for all $z\in S^1\,,$
 \begin{equation}\label{cond1}
|\Phi'  (z)|=e^{\lambda(z)},~~z\in S^1\,,
\end{equation}
and the curvature of $\Phi(S^1)$ is $\kappa$\,.
\end{Theorem}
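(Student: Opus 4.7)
The plan is to translate between the analytic datum $(\lambda,\kappa)$ on $S^1$ and the geometric datum $(\Phi,\gamma,\sigma)$ through the holomorphic logarithm of $\Phi'$, using the Dirichlet-to-Neumann characterization of the half-Laplacian on $S^1$. Specifically, if $A\colon D^2\to\R$ is the harmonic extension of $\lambda$, then $\lapfr\lambda=\partial_r A\bigr|_{r=1}$, and if $B$ is a harmonic conjugate of $A$ on the simply connected disk, the Cauchy--Riemann equations in polar coordinates give $\partial_\theta B\bigr|_{r=1}=\partial_r A\bigr|_{r=1}=\lapfr\lambda$. This is the linear bridge that converts the PDE into a Frenet-type identity for the boundary curve.

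For the direction ``$\Leftarrow$'', with $\Phi,\sigma,\gamma$ given as in the statement, I define $h:=\log\Phi'=A+iB$ on $D^2$ (well defined since $\Phi'$ is non-vanishing and $D^2$ is simply connected), so that $A\bigr|_{S^1}=\lambda$. Parametrising the boundary curve by $\theta\mapsto\Phi(e^{i\theta})$, the velocity is $\Phi'(e^{i\theta})ie^{i\theta}$, of modulus $e^\lambda$ and argument $B(e^{i\theta})+\theta+\pi/2$; the signed geodesic curvature is the $\theta$-derivative of this argument divided by the speed, which gives $\kappa=e^{-\lambda}\bigl(\partial_\theta B\bigr|_{S^1}+1\bigr)=e^{-\lambda}(\lapfr\lambda+1)$, i.e.\ $\lapfr\lambda=\kappa e^\lambda-1$.

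For the direction ``$\Rightarrow$'' I reverse the construction. Let $A$ be the harmonic extension of $\lambda$ to $D^2$, let $B$ be its harmonic conjugate normalised by $B(0)=0$, set $h:=A+iB$, and define $\Phi(z):=\int_0^z e^{h(w)}\,dw$. Then $\Phi$ is single-valued and holomorphic with $\Phi'=e^h$ non-vanishing on $D^2$ and $|\Phi'(e^{i\theta})|=e^{\lambda(\theta)}$ on the boundary; the computation of the previous paragraph, read backwards, identifies the geodesic curvature of $\Phi|_{S^1}$ with $\kappa$, and closedness follows from the single-valuedness of $\Phi$. With $L:=\int_{S^1}e^\lambda\,d\theta$ finite, reparametrizing $\theta\mapsto\Phi(e^{i\theta})$ by constant speed $L/(2\pi)$ then produces $\gamma$ together with the diffeomorphism $\sigma\colon S^1\to S^1$ satisfying $\Phi\circ\sigma=\gamma$ and $|\dot\gamma|\equiv L/(2\pi)$.

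The main technical obstacle is promoting this formal construction to the claimed regularity, since only $\lambda,e^\lambda\in L^1(S^1)$ are assumed. My strategy is a bootstrap: the equation gives $\lapfr\lambda\in L^1$; a Moser--Trudinger-type inequality on $S^1$ for $\lapfr$ (the one-dimensional counterpart of the Br\'ezis--Merle estimate) then yields $e^{p\lambda}\in L^1$ for some $p>1$, whence $\lapfr\lambda\in L^p$ and fractional elliptic regularity lifts $\lambda$ to $C^{0,\alpha}(S^1)$. Poisson integration transfers this to continuity of $A$ and $B$ up to $\partial D^2$, giving continuity and non-vanishing of $\Phi'=e^{A+iB}$ on $\bar D^2$, so that $\Phi$ is a genuine immersion. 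The bound $\kappa\in L^\infty$, combined with $|\ddot\gamma|=|\kappa|\,|\dot\gamma|^2$, then delivers $\gamma\in W^{2,\infty}$, while continuity and positivity of $e^\lambda$ make $\sigma$ a diffeomorphism.
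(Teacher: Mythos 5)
Your architecture is the same as the one in \cite{DMR} (and as in this paper's proof of the extended Theorem \ref{geomequiv}): construct $\Phi$ by integrating $e^{h}$ where $h$ is the holomorphic function on $D^2$ whose real part is the harmonic extension of $\lambda$, and identify the geodesic curvature of $\partial\mapsto\Phi(e^{i\theta})$ with $e^{-\lambda}(\partial_\theta B+1)$ via the Frenet formula; calling $B$ ``the harmonic conjugate'' versus calling $B|_{S^1}=\mathcal H(\lambda)$ ``the Hilbert transform'' is a purely notational difference, since $\partial_\theta\mathcal H(\lambda)=\partial_r A|_{r=1}=(-\Delta)^{1/2}\lambda$ is exactly the identity \eqref{lapHilb} the paper uses. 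The two directions of the equivalence, the constant-speed reparametrisation giving $\gamma,\sigma$, and the bound $|\ddot\gamma|=|\kappa\|\dot\gamma|^2\le\bar\kappa(L/2\pi)^2$ giving $\gamma\in W^{2,\infty}$, are all correct.

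There is, however, a genuine imprecision in the regularity bootstrap, at the step ``a Moser--Trudinger-type inequality on $S^1$ then yields $e^{p\lambda}\in L^1$ for some $p>1$.'' The global estimate of Theorem \ref{MT4}, applied to $\lambda-\bar\lambda=G*(-\Delta)^{1/2}\lambda$ with $M:=\|(-\Delta)^{1/2}\lambda\|_{L^1}$, only gives $\int_{S^1}e^{c|\lambda-\bar\lambda|}\,d\theta<\infty$ for $c<\pi/M$. Since $M\le\bar\kappa L+2\pi$ is not assumed small, $c$ may be well below $1$, and this does \emph{not} improve $e^{\lambda}\in L^1$ to $e^{p\lambda}\in L^1$ with $p>1$. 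What saves the argument is localisation: because $\kappa e^\lambda\in L^1$, the measure $|\kappa|e^\lambda\,d\theta$ is absolutely continuous, so every point of $S^1$ has a small arc on which the mass is below any prescribed threshold; splitting the source into a near part (small $L^1$ norm, hence arbitrarily high exponential integrability of $G*{\rm near}$) and a far part ($G*{\rm far}$ bounded near the point) and covering $S^1$ by finitely many such arcs yields $\lambda\in L^\infty(S^1)$. This is precisely the $\varepsilon$-regularity Lemma \ref{eLemma}. Once $\lambda\in L^\infty$, the right-hand side of the equation is in $L^\infty$ and the rest of your bootstrap ($\lambda\in W^{1,p}\hookrightarrow C^{0,\alpha}$, continuity and non-vanishing of $\Phi'$ on $\bar D^2$, $\sigma$ a $C^1$-diffeomorphism) goes through. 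So you should replace the single appeal to Moser--Trudinger by the absolute-continuity localisation, which is a non-trivial step and not a formality.
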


\begin{figure}
\begin{center}
\includegraphics[width=14cm]{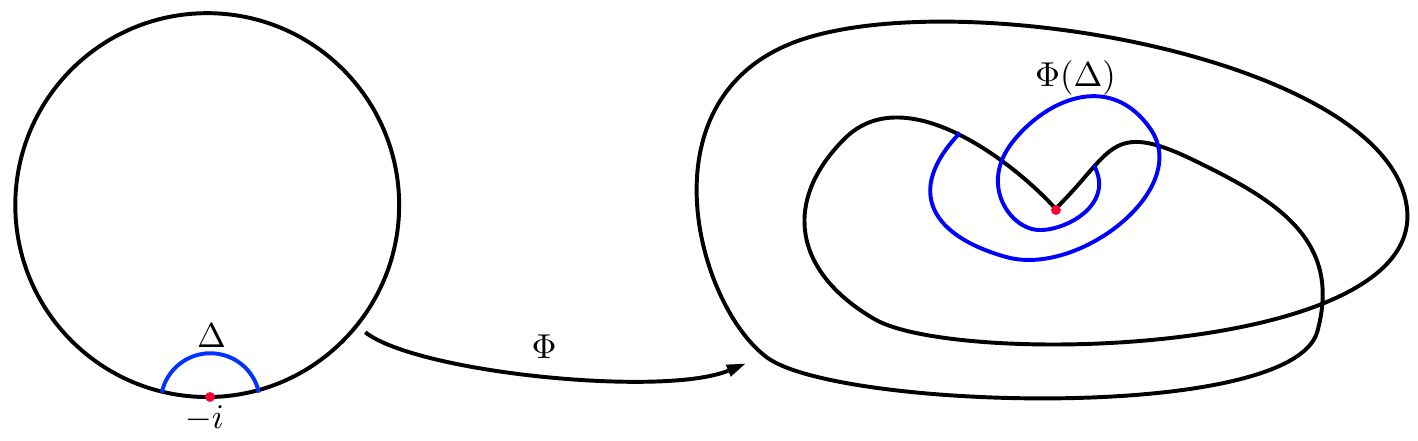}
 
\end{center}
\caption{\small The map $\Phi$ given by Theorem \ref{geomequiv} is in general singular at $-i$. Because of this the curve $\Phi|_{S^1}$ can have rotation index greater than $1$ (it is $2$ in the above example). The image of the curve $\Delta$ should facilitate the intuition of the geometry of $\Phi$ near $-i$.}\label{FPhi}
\end{figure}

Indeed it was shown in \cite{DMR} that if $u\in L_\frac12(\R)$, $e^u\in L^1(\R)$ and $u$ satisfies \eqref{liou5int}, then
$$\lambda(e^{i\theta}):=u(\Pi(e^{i\theta}))-\log(1+\sin \theta)$$
satisfies
\begin{equation}\label{liou6}
(-\Delta)^\frac{1}{2}\lambda=(K\circ\Pi) e^{\lambda}-1 +\left(2\pi -\Lambda \right)\delta_{-i}\quad \text{in }S^1, \quad \Lambda:=\int_{\R}\lapfr u dx.
\end{equation}
Then Theorem \ref{geomequiv} can be seen as an extension of Theorem \ref{geomequiv0} to the case in which $\Lambda\ne 2\pi$.


Another ingredient in the proof of Theorem \ref{trm2} comes from differential geometry and roughly speaking says the if a closed positively oriented curve $\gamma:S^1\to \C$ of class $C^1$ except at finitely many points can be extended to a function $F\in C^0(D^2, \C)$ which is a $C^1$-immersion except at finitely many boundary points, then the rotation index of $\gamma$ is at least $1$ (see Definition \ref{defrot} and Theorem \ref{trmBlank}). This is obvious if $F\in C^1(\bar D^2,\C)$ is an immersion everywhere (no corners on the boundary), and in fact the rotation index of $\gamma=F|_{S^1}$ is $1$ in this case, but in the general case the rotation index can be arbitrarily high and the proof that it must be strictly positive rests on ideas introduced by Blank to study which regular closed curves can be extended to an immersion of the disk into the plane.

Mixing this result (rotation index at least $1$) with Theorem \ref{geomequiv} (via Proposition \ref{proppull2} and Corollary \ref{corangle}) one obtains at once:

\begin{Theorem}\label{thmpi} Let  $u\in L_\frac12(\R)$ with $e^u\in L^1(\R)$ satisfy \eqref{liou5int} for some function $K \in L^{\infty}(\R)$ and let
$$\Lambda:=\int_{\R} K e^u dx.$$
Then $\Lambda \ge \pi$. If $K\ge 0$ then $\Lambda>\pi$.
\end{Theorem}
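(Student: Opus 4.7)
\medskip\noindent\emph{Proof plan for Theorem \ref{thmpi}.} The strategy is to translate the analytic inequality $\Lambda\ge\pi$ into a topological statement about the rotation index of a planar closed curve extracted from $u$ via Theorem \ref{geomequiv}, and then invoke the Blank-type result announced in the introduction. First, I would apply Theorem \ref{geomequiv} to $u$, producing a continuous map $\Phi\colon \bar D^2\to \C$ which is a holomorphic immersion of $\bar D^2\setminus\{-i\}$ and whose boundary trace $\gamma:=\Phi|_{S^1}$ lies in $W^{2,p}_{\loc}(S^1\setminus\{-i\})$ with geodesic curvature $\kappa=K\circ\Pi$. A short conformal computation then identifies $\Lambda$ with the total signed curvature of $\gamma$: since $ds=|\Phi'(e^{i\theta})|\,d\theta$ along $\gamma$, combining \eqref{cond1bis} with the change of variable $x=\Pi(e^{i\theta})$, whose Jacobian is precisely the conformal factor $\tfrac{2}{1+x^2}$ appearing in \eqref{cond1bis}, yields
\[
\int_\gamma \kappa\,ds=\int_{S^1\setminus\{-i\}}K(\Pi(e^{i\theta}))\,|\Phi'(e^{i\theta})|\,d\theta=\int_\R K(x)\,e^{u(x)}\,dx=\Lambda.
\]

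Next, since $\gamma$ is the boundary trace of the immersion $\Phi$ of the disk with a single boundary singularity at $-i$, the Blank-type theorem (Theorem \ref{trmBlank}) delivers the topological lower bound $n(\gamma)\ge 1$ on the rotation index of $\gamma$. The curve $\gamma$ admits one-sided tangents at $\Phi(-i)$, and the oriented exterior angle $\alpha\in[-\pi,\pi]$ there is well defined and satisfies $\alpha\le\pi$; this is the content of Corollary \ref{corangle}. Applying the Hopf Umlaufsatz for a piecewise $W^{2,p}$-closed curve with one corner,
\[
2\pi\,n(\gamma)=\int_\gamma \kappa\,ds+\alpha=\Lambda+\alpha,
\]
and rearranging gives $\Lambda=2\pi\,n(\gamma)-\alpha\ge 2\pi-\pi=\pi$. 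For the strict inequality when $K\ge 0$, note that $\Lambda=\pi$ would force both $n(\gamma)=1$ and $\alpha=\pi$, i.e.\ a reversing cusp of $\gamma$ at $\Phi(-i)$. Under $K\ge 0$ the geodesic curvature of $\gamma$ is nonnegative, so $\gamma$ never turns to the right, and the sign-refinement of Corollary \ref{corangle} rules out a $+\pi$ cusp bounding a positive-Jacobian immersion on either side of $-i$, giving $\alpha<\pi$ and hence $\Lambda>\pi$.

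I expect the main obstacle to sit at the Blank-type step: unlike in the classical Umlaufsatz, $\gamma$ need not be simple, and the boundary singularity at $-i$ may be wild, so the inequality $n(\gamma)\ge 1$ is not routine. The proof requires Blank's combinatorial analysis of the self-intersections of $\gamma$ and of how the connected components of $\Phi(D^2)\setminus\gamma$ are glued across them. By comparison, the conformal change-of-variable identity $\int_\gamma \kappa\,ds=\Lambda$ and the one-corner Umlaufsatz are essentially bookkeeping, while the rulings on the angle $\alpha$ at $\Phi(-i)$ (both $\alpha\le\pi$ in general and $\alpha<\pi$ when $K\ge 0$) are packaged into Corollary \ref{corangle}.
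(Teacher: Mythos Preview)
Your argument for $\Lambda\ge\pi$ is exactly the paper's: apply Theorem \ref{geomequiv} to produce $\Phi$, identify $\int_\gamma\kappa\,ds=\Lambda$ by the conformal change of variables, and then read off $\Lambda\ge\pi$ from $r(\gamma)\ge 1$ together with $\ve_1\in[-\pi,\pi]$. The paper packages the last two steps into Corollary \ref{corangle} (equivalently, $\beta=2\pi-\Lambda\le\pi$ via Proposition \ref{proppull2}), so on this half you and the paper agree.

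The gap is in the strict inequality for $K\ge 0$. You invoke a ``sign-refinement of Corollary \ref{corangle}'' to rule out the exterior angle $\alpha=\pi$, but no such refinement exists in the paper, and your heuristic does not establish it: nonnegative curvature means $\gamma$ turns left along the smooth part, which is \emph{consistent} with a $+\pi$ (left-turning) cusp in the convention of Definition \ref{defrot}, not in tension with it. So the claim ``$\kappa\ge 0\Rightarrow\alpha<\pi$'' is unproven. The paper handles $K\ge 0$ by a completely different, analytic route: from the representation
\[
u(x)=\frac{1}{\pi}\int_{\R}\log\!\Big(\frac{1+|y|}{|x-y|}\Big)K(y)e^{u(y)}\,dy+C_0
\]
and $K\ge 0$ one obtains $u(x)\ge -\tfrac{\Lambda}{\pi}\log(1+|x|)-C$, and this lower bound is incompatible with $e^{u}\in L^1(\R)$ unless $\Lambda>\pi$. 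If you want to stay on the geometric side, you would need an honest argument excluding a holomorphic immersion $\Phi\in C^0(\bar D^2)$ whose boundary curve is convex with a single $+\pi$ cusp and rotation index $1$; that is a separate statement, not a corollary of what is written.
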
 

When $K\ge 0$, the proof of Theorem \ref{thmpi} is fairly easy, since $u$ can be written as
$$u(x)=\frac{1}{\pi}\int_{\R}\log\left(\frac{1+|y|}{|x-y|}\right) K(y)e^{u(y)}dy+C_0$$
for a constant $C$ (compare to Proposition 5.5 in \cite{DMR}), and for $K\ge 0$ one easily obtain an asymptotic behavior of the form
\begin{equation}\label{uasym}
u(x)\ge -\frac{\Lambda}{\pi} \log(1+|x|) -C
\end{equation}
for some contant $C$ and this contradicts the assumption $e^{u}\in L^1(\R)$ if $\Lambda\le \pi$. More delicate is the case when $K$ is allowed to change sign, because of possible cancelations. In any case, more than result itself, we would like to emphasize that it has a geometric interpretation in terms of the rotation index of the curve $\Phi|_{S^1}$ given by Theorem \ref{geomequiv} (see Fig. \ref{Findex2}). Whether an analogue interpretation can be used for the Liouville equation in dimension $2$ or higher is unknown.

Another consequence of Theorem \ref{geomequiv} is a new and geometric proof, not relying on a Pohozaev-type identity, nor on the moving plane technique (moving point in this case), of the classification of the solutions to the non-local equation
\begin{equation}\label{liou}
(-\Delta)^\frac{1}{2} u = e^{u}\quad\text{in }\R,
\end{equation}
under the integrability condition 
\begin{equation}\label{area}
L:=\int_{\R{}}e^u dx<\infty.
\end{equation}

\begin{Theorem}[\cite{CY,Xu}]\label{trm1} Every function $u\in L_{\frac{1}{2}}(\R)$ solving \eqref{liou}-\eqref{area} is of the form
\begin{equation}\label{specsol}
u_{\mu,x_0}(x):=\log\bigg(\frac{2\mu}{1+\mu^2|x-x_0|^2}\bigg), \quad x\in \R^n,
\end{equation}
for some $\mu>0$ and $x_0\in\R{}$.
\end{Theorem}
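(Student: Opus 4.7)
The plan is to use the geometric interpretation provided by Theorem \ref{geomequiv} to reduce the classification to one of holomorphic maps of the disk with unit-modulus boundary values. Applying Theorem \ref{geomequiv} to a solution $u$ of \eqref{liou}--\eqref{area} with $K\equiv 1$, one obtains a continuous map $\Phi\colon \bar D^2\to \C$ that is a holomorphic immersion on $\bar D^2\setminus\{-i\}$, with boundary arc $\Phi|_{S^1\setminus\{-i\}}$ of constant curvature $1$. Since this arc is $C^2$ on the connected set $S^1\setminus\{-i\}$ and has constant curvature, the fundamental theorem of plane curves forces its image to lie on a single unit circle $C\subset\C$, and by continuity $\Phi(S^1)\subset C$.

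After the translation $\Phi\mapsto \Phi - c$, where $c$ is the center of $C$ (which affects neither $|\Phi'|$ nor the curvature of the boundary curve), one may assume $C=S^1$. Then $\Phi$ is continuous on $\bar D^2$, holomorphic on $D^2$, and satisfies $|\Phi|\equiv 1$ on $S^1$. By the maximum modulus principle $\Phi(\bar D^2)\subset \bar D^2$, and a classical theorem then identifies such a map with a finite Blaschke product
\[ \Phi(z)=e^{i\alpha}\prod_{j=1}^n\frac{z-a_j}{1-\bar a_j z},\qquad |a_j|<1,\ \alpha\in\R. \]
A degree-$n$ Blaschke product has exactly $n-1$ critical points in $D^2$ (all of which lie in $D^2\subset \bar D^2\setminus\{-i\}$) and none on $S^1$, whereas $\Phi$ is required to be an immersion on $\bar D^2\setminus\{-i\}$. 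Hence $n=1$ and $\Phi(z)=e^{i\alpha}\frac{z-a}{1-\bar a z}$ for some $|a|<1$.

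The classification is then completed by a direct computation: one writes $|\Phi'(z)|=\frac{1-|a|^2}{|1-\bar a z|^2}$ on $S^1$, uses the identity $|1-\bar a z|^2 = 1 - 2\Re(\bar a z) + |a|^2$ for $|z|=1$ together with the explicit parametrization $\Pi^{-1}(x) = \frac{2x}{1+x^2} + i\frac{1-x^2}{1+x^2}$ of the inverse stereographic projection, and solves \eqref{cond1bis} for $e^u$. The resulting expression takes the form $e^{u(x)}=\frac{2\mu}{1+\mu^2(x-x_0)^2}$ with $\mu$ and $x_0$ explicit rational functions of $a$, and the map $a\in D^2\mapsto(\mu,x_0)\in(0,\infty)\times\R$ is easily seen to be surjective, so every solution is a $u_{\mu,x_0}$. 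The main obstacle is the middle step: once one has accepted that the boundary curve of $\Phi$ traces a unit circle, the real question is whether $\Phi|_{S^1}$ might wrap that circle more than once. The immersion hypothesis of Theorem \ref{geomequiv}, via its incompatibility with the $n-1$ unavoidable interior critical points of any higher-degree Blaschke product, is precisely what prevents this.
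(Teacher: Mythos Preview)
Your argument is correct and follows essentially the same route as the paper: apply Theorem \ref{geomequiv} to produce $\Phi\in C^0(\bar D^2)$ holomorphic with $\Phi(S^1)$ on a unit circle, invoke the Blaschke-product classification (the paper's Lemma \ref{lemmabla}), and use the immersion hypothesis $\Phi'\ne 0$ in $D^2$ to force degree $1$. The only cosmetic differences are that the paper phrases the last step as ``$\Phi'$ never vanishes in $D^2$'' rather than via your critical-point count, and defers the explicit computation of $u$ to \cite{DMR} instead of sketching it.
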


The proof will be given at the end of Section \ref{sec:2}. Similar higher-dimensional, also in the fractional case have appeared in \cite{CY, hyd,JMMX,lin1, mar1, Xu}.

\paragraph{Acknowledgements} We would like to thank Carlos Tomei and Tristan Rivi\`ere for referring us to the works of Blank \cite{bla} and Po\'enaru \cite{poe}, and to the monograph \cite{MST}.

\section{From the Liouville equation to conformal immersions of the unit disc with a boundary singularity}\label{sec:2}

It is easy to verify that for  $\phi\in L^1(S^1)$ we have
\begin{equation}\label{lapHilb}
(-\Delta)^\frac12 \phi(\theta)=\sum_{n\in\mathbb{Z}}|n|\hat\phi(n)e^{in\theta}={\cal{H}}\left(\frac{\partial \phi}{\partial\theta}\right)=\frac{\de \mathcal{H}(\phi)}{\de\theta},
\end{equation}
where ${\cal{H}}$ is the Hilbert Transform  on $S^1$ defined by
$${\cal{H}}(f)(\theta) :=\sum_{n\in \Z\setminus\{0\}} -i \sign (n)\hat{f}(n)e^{in\theta},\quad f\in\mathcal{D}'(S^1) \,.$$

The following lemma is well-known.

\begin{Lemma}\label{LemmaHilbert}  The Hilbert transform $\cal{H}$ is  bounded from $L^p(S^1)$ into itself, for $1<p<+\infty$, and it is of weak type $(1,1)$. A function  $f:=u +i v$ with $u,v\in L^1(S^1,\R)$ can be extended to a holomorphic function in $D^2$ if and only if $v=\mathcal{H}(u)+a$ for some $a\in \mathbb{C}$ .
\end{Lemma}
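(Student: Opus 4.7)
The plan is to separate the boundedness claims from the characterization of boundary values of holomorphic functions.

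For the first part, the approach is to identify $\mathcal{H}$ with a principal value convolution. A direct Fourier computation shows that on trigonometric polynomials
$$\mathcal{H}f(\theta)=\frac{1}{2\pi}\,\mathrm{p.v.}\int_{-\pi}^{\pi} f(\theta-\varphi)\,\tfrac12\cot(\varphi/2)\,d\varphi,$$
so $\mathcal{H}$ is a Calder\'on--Zygmund operator on $S^1$. The $L^2$ bound $\|\mathcal{H}f\|_{L^2}\le\|f\|_{L^2}$ is immediate from Plancherel's theorem, since the multiplier $-i\,\sign(n)$ has modulus at most $1$. To obtain the weak-$(1,1)$ estimate I would run the Calder\'on--Zygmund decomposition of $f$ at height $\lambda>0$: write $f=g+\sum_I b_I$ with $\|g\|_{L^\infty}\lesssim \lambda$, $\|g\|_{L^2}^2\lesssim \lambda\|f\|_{L^1}$, the $b_I$ supported on disjoint intervals $I$ of total length $\lesssim\lambda^{-1}\|f\|_{L^1}$ and each of mean zero. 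One controls $\mathcal{H}g$ by Chebyshev and the $L^2$ bound, and $\mathcal{H}(\sum_I b_I)$ outside a doubled union $\bigcup 2I$ via the H\"ormander cancellation/smoothness of $\cot(\varphi/2)$ together with the zero mean of each $b_I$. The $L^p$ bound for $1<p\le 2$ then follows by Marcinkiewicz interpolation between weak-$(1,1)$ and strong $L^2$, and the case $2\le p<\infty$ by duality, using the skew-adjointness $\mathcal{H}^*=-\mathcal{H}$ visible from the multiplier.

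For the second part the tool is Fourier series. A function $f\in L^1(S^1)$ extends to a holomorphic function on $D^2$ (via its harmonic Poisson extension) if and only if $\hat f(n)=0$ for every $n<0$, which is the standard characterization of $H^1$ boundary values. Writing $f=u+iv$ with $u,v$ real gives $\hat f(n)=\hat u(n)+i\hat v(n)$, and the reality of $u,v$ gives $\hat u(-n)=\overline{\hat u(n)}$ and $\hat v(-n)=\overline{\hat v(n)}$. A short Fourier calculation shows that the vanishing of $\hat f(n)$ for $n<0$ is equivalent to $\hat v(n)=-i\,\sign(n)\hat u(n)$ for every $n\neq 0$, which by definition of $\mathcal{H}$ is exactly $v-\hat v(0)=\mathcal{H}(u)$, i.e.\ $v=\mathcal{H}(u)+a$ with $a=\hat v(0)$ (and one easily sees $a\in\mathbb{R}$ from reality, so the statement with $a\in\mathbb{C}$ is harmless).

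The main obstacle is the weak-$(1,1)$ bound, where the genuine harmonic-analytic content of M.~Riesz's theorem sits; everything else (Plancherel, Marcinkiewicz interpolation, duality, Fourier bookkeeping) is routine once the multiplier is identified. A shorter but less systematic alternative would be M.~Riesz's own argument: for $u\ge0$ the holomorphic extension $F=u+i\mathcal{H}(u)$ satisfies that $|F|^p$ is subharmonic for $0<p<\infty$, from which one bounds $\mathcal{H}(u)$ by a clever contour computation; this avoids Calder\'on--Zygmund machinery but is less flexible.
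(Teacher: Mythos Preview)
The paper does not actually supply a proof of this lemma: it is simply declared ``well-known'' and stated without argument. So there is no paper proof to compare against; your task reduces to whether your outline is a valid proof of a classical fact.

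Your approach is the standard one and is correct. The identification of $\mathcal{H}$ with a principal-value convolution against the conjugate Poisson kernel (whose boundary value is $\cot(\varphi/2)$, up to a harmless normalization constant) is right, and the Calder\'on--Zygmund route (Plancherel $\Rightarrow$ $L^2$, CZ decomposition $\Rightarrow$ weak-$(1,1)$, Marcinkiewicz $\Rightarrow$ $1<p<2$, duality via $\mathcal{H}^*=-\mathcal{H}$ $\Rightarrow$ $2<p<\infty$) is exactly the textbook proof of M.~Riesz's theorem on the circle. For the second part, the Fourier bookkeeping is also correct: the Poisson extension of $f\in L^1(S^1)$ is holomorphic in $D^2$ precisely when $\hat f(n)=0$ for $n<0$, and together with the reality conditions $\hat u(-n)=\overline{\hat u(n)}$, $\hat v(-n)=\overline{\hat v(n)}$ this unwinds to $\hat v(n)=-i\,\sign(n)\hat u(n)$ for $n\ne 0$, i.e.\ $v=\mathcal{H}(u)+\hat v(0)$. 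Your observation that the constant $a$ is in fact real (since $v$ and $\mathcal{H}(u)$ are real-valued when $u$ is) is a correct sharpening of the paper's statement, which writes $a\in\mathbb{C}$.
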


Consider the conformal map $G:D^2\to \R^2$ given by
$$G(z)=\frac{iz+1}{z+i}=\frac{z+\bar z +i(|z|^2-1)}{1+|z|^2+i(\bar z-z)}.$$
 We will use  on the domain $D^2$ the coordinate $z=\xi+i\eta$ and on the target $\R^2$ the coordinates $(x,y)$ or $x+iy$.
Notice that
$$G|_{S^1}(\xi+i\eta)=\frac{\xi}{1+\eta},$$
hence $\Pi:= G^1|_{S^1}:S^1\setminus \{-i\}\to\R$ is the stereographic projection. Its inverse is
\begin{equation}\label{formulaPi}
\Pi^{-1}(x)=\frac{2x}{1+x^2}+i\left(-1+\frac{2}{1+x^2}\right).
\end{equation}
If we write $\Pi^{-1}(x)=e^{i\theta(x)}$ we get the following useful relation 
\begin{equation}\label{sintheta}
1+\sin(\theta(x))=\frac{2}{1+x^2},\quad \frac{2}{1+\Pi(\theta)^2}=1+\sin\theta,
\end{equation}
which follows easily from $\sin(\theta(x))=\Im(\Pi^{-1}(x))=\frac{1-x^2}{1+x^2}$.

The following proposition is proven in \cite[Prop. 4.2]{DMR}.

\begin{Proposition}\label{proppull2} Given $u:\R\to\R$ set $\lambda$ as
\begin{equation}\label{deflambda2}
\lambda(\theta)=u(\Pi(\theta))+\log\left|\frac{\de \Pi}{\de \theta}\right| =u(\Pi(\theta))- \log\left(1+\sin\theta\right),
\end{equation}
or equivalently and using \eqref{sintheta}
\begin{equation}\label{deflambda3}
u(x)=\lambda(\Pi^{-1}(x))+\log\left(\frac{2}{1+x^2}\right).
\end{equation}
Then $u\in L_\frac{1}{2}(\R)$ if and only if $\lambda\in L^1(S^1)$, and $(-\Delta)^\frac12 u\in L^1(\R)$ if and only if $(-\Delta)^\frac{1}{2} \lambda\in L^1(S^1\setminus\{-i\})$. In this case $u$ solves \eqref{liou5int} if and only if $\lambda$ solves
\begin{equation}\label{liou6}
(-\Delta)^\frac{1}{2}\lambda=(K\circ \Pi) e^{\lambda}-1 +\left(2\pi -\int_{\R}(-\Delta)^\frac12 u dx\right)\delta_{-i}\quad \text{in }S^1.
\end{equation}
\end{Proposition}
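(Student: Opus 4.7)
The plan is to first handle the integrability equivalence by a direct change of variables, then derive the main distributional transformation formula for $\lapfr$ under the stereographic parameterization, and finally combine this with a computation of $\lapfr\log(1+\sin\theta)$ to obtain \eqref{liou6}. For the integrability part, the substitution $x=\Pi(\theta)$ combined with \eqref{sintheta} gives $\frac{dx}{1+x^2}=\frac12\,d\theta$, so $u\in L_{\frac12}(\R)$ is the same as $u\circ\Pi\in L^1(S^1)$; since $\log(1+\sin\theta)$ has only an integrable logarithmic singularity at $\theta=-\frac\pi2$, $\lambda=u\circ\Pi-\log(1+\sin\theta)$ belongs to $L^1(S^1)$ iff $u\circ\Pi$ does.

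The core of the proof is the distributional identity
\begin{equation*}
\lapfr(u\circ\Pi)(\theta)=|\Pi'(\theta)|\,\lapfr u(\Pi(\theta))-\Lambda\,\delta_{-i}\qquad \text{on }S^1,\qquad \Lambda:=\int_\R\lapfr u\,dx.
\end{equation*}
I would obtain this by conformal transport of Poisson extensions: if $U$ is the Poisson extension of $u$ to the lower half-plane, then $\tilde U:=U\circ G$ is harmonic in $D^2$ and, by the conformal invariance of harmonic measure, coincides with the Poisson extension of $u\circ\Pi$. Since $G$ is holomorphic and orientation-preserving, $dG$ scales outward normals at the boundary by $|G'|$, yielding the pointwise relation $\partial_r\tilde U(e^{i\theta})=|G'(e^{i\theta})|\,\partial_y U(\Pi(\theta),0)=|\Pi'(\theta)|\lapfr u(\Pi(\theta))$ for all $\theta\ne-\frac\pi2$. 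As distributions on $S^1$ these two sides can differ only by a multiple of $\delta_{-i}$, since $G$ is a diffeomorphism away from the single boundary point $-i$, which is sent to $\infty$. I would pin down the coefficient by pairing with the constant test function $1$: the left side integrates to zero (vanishing of the zeroth Fourier coefficient), while the change of variables $x=\Pi(\theta)$ gives $\int_{S^1}|\Pi'|\lapfr u\circ\Pi\,d\theta=\int_\R\lapfr u\,dx=\Lambda$, forcing the coefficient to be $-\Lambda$.

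Specializing to the explicit solution $u_*(x)=\log\frac{2}{1+x^2}$ of $\lapfr u=e^u$ (for which $u_*\circ\Pi=\log(1+\sin\theta)$, $|\Pi'|\,e^{u_*}\circ\Pi\equiv 1$, and $\Lambda=2\pi$), the above identity collapses to $\lapfr\log(1+\sin\theta)=1-2\pi\delta_{-i}$. Subtracting from the general identity,
\begin{equation*}
\lapfr\lambda=\lapfr(u\circ\Pi)-\lapfr\log(1+\sin\theta)=|\Pi'|\,\lapfr u\circ\Pi-1+(2\pi-\Lambda)\delta_{-i},
\end{equation*}
and if $\lapfr u=Ke^u$ then $e^{u\circ\Pi}=(1+\sin\theta)e^\lambda$ combined with $|\Pi'|=\frac{1}{1+\sin\theta}$ gives $|\Pi'|\,\lapfr u\circ\Pi=(K\circ\Pi)e^\lambda$, which is exactly \eqref{liou6}. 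The equivalence $\lapfr u\in L^1(\R)\iff \lapfr\lambda\in L^1(S^1\setminus\{-i\})$ then follows by restricting the main identity to the complement of $-i$, where only the Jacobian factor $|\Pi'|$ intervenes. The main obstacle is justifying the distributional transformation formula and, in particular, computing the coefficient of $\delta_{-i}$: this is the only place where the non-compactness of $\R$ (equivalently, the fact that $G$ fails to be a diffeomorphism of $\bar D^2$ onto $\overline{\R\times(-\infty,0)}$) enters the picture, and it is responsible for the boundary mass $(2\pi-\Lambda)\delta_{-i}$ appearing in \eqref{liou6}.
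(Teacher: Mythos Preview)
The paper does not actually prove this proposition; it simply cites \cite[Prop.~4.2]{DMR}. Your argument is a correct and natural route to the result and is very likely close in spirit to what is done there. The change of variables for the integrability statements is fine, the conformal transport of the Dirichlet-to-Neumann map via $\tilde U=U\circ G$ is exactly the right mechanism, and your device of specialising the transformation formula to $u_*(x)=\log\tfrac{2}{1+x^2}$ to read off $\lapfr\log(1+\sin\theta)=1-2\pi\delta_{-i}$ is elegant. (In the paper this last identity is obtained instead by translating the fundamental solution of Lemma~\ref{lemmafund4} by $-\tfrac{\pi}{2}$; either way works.)

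There is one genuine gap you should close. From the pointwise identity on $S^1\setminus\{-i\}$ you conclude that the distribution
\[
T:=\lapfr(u\circ\Pi)-|\Pi'|\,(\lapfr u)\circ\Pi
\]
is supported at $-i$, and then pair with the constant $1$ to get the coefficient $-\Lambda$. But ``supported at a point'' only gives $T=\sum_{j\ge 0}c_j\delta_{-i}^{(j)}$, and pairing with $1$ fixes $c_0$ without killing $c_1,c_2$. You need an a priori reason why $T$ has order~$0$. One clean way: $\partial_y U$ is the Poisson extension of $\lapfr u\in L^1(\R)$ to the half-plane, so $\sup_{y}\|\partial_y U(\cdot,y)\|_{L^1(\R)}\le\|\lapfr u\|_{L^1(\R)}$; transporting back by $G$ and changing variables on the circles $\{|z|=r\}$ gives $\sup_{r<1}\|\partial_r\tilde U(re^{i\cdot})\|_{L^1(S^1)}<\infty$, whence $\lapfr(u\circ\Pi)=\lim_{r\to1}\partial_r\tilde U(re^{i\cdot})$ is a finite signed measure, and $T$ is a measure supported at a point, i.e.\ a multiple of $\delta_{-i}$. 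A related point: the identification of $U\circ G$ with the Poisson extension of $u\circ\Pi$ (rather than some other harmonic function with the same boundary values) also needs a word---your invocation of conformal invariance of harmonic measure is the right justification, but it should be stated that this is what guarantees the Dirichlet-to-Neumann interpretation \eqref{fraclapl7} applies to $\tilde U$.
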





\noindent\textbf{Proof of Theorem \ref{geomequiv}} Given $u\in L^1(\R)$ as in the statement of the theorem, set $\lambda\in L^1(S^1)$ as in \eqref{deflambda2}, so that by Proposition \ref{proppull2} we have
$$\lapfr \lambda =\kappa e^\lambda -1+\beta\delta_{-i},$$
with
$$\beta:=2\pi-\int_{\R}\lapfr udx,\quad \kappa=K\circ\Pi.$$
By Lemma \ref{lemmafund4} the function $\lambda_1:=\lambda+\frac{\beta}{2\pi}\log(1+\sin\theta)$ satisfies 
$$(-\Delta)^\frac12 \lambda_1=\kappa e^\lambda -1+\frac{\beta}{2\pi}\quad\text{in }S^1,$$
hence by regularity theory we have $\lambda_1\in L^p(S^1)$, and $\lambda\in L^p(S^1)\cap W^{1,p}_{\loc}(S^1\setminus \{-i\})$ for every $p<\infty$. Then $\rho:=\mathcal{H}(\lambda)\in L^p(S^1)\cap W^{1,p}_{\loc}(S^1\setminus\{-i\})$ and by Lemma \ref{LemmaHilbert} one can define the holomorphic extension $\widetilde{\lambda+i\rho}$ and $\phi:=e^{\widetilde{\lambda+i\rho}}$, and 
$$\Phi(z):=\int_{\Sigma_{0,z}}\phi(w)dw,\quad z\in \bar D^2\setminus\{-i\}$$
where $\Sigma_{0,z}$ is the oriented segment joining $0$ to $z$, or any other piecewise $C^1$ path in $\bar D^2\setminus\{-i\}$ joining $0$ to $z$.
That $\Phi$ can be continuously extended to the point $-i$ depends on the following facts.
We have
$$\lim_{\delta\to 0} \bigg|\int_{-\delta}^\delta \phi (-ie^{it}) dt\bigg| \le \lim_{\delta\to 0} \int_{-\delta}^\delta e^{\lambda(-ie^{it})}  dt =0$$
since $e^\lambda\in L^1(S^1)$.
We also have
$$\lim_{r\to 1^-}\int_{S^1}|\phi(re^{it})-\phi(e^{it})|dt=0,$$
since $\phi(re^{it})$ is obtained by convolving $\phi(e^{it})$ with the Poisson kernel $P(r,t)$, see \eqref{Poisson1}. In particular
$$\lim_{\delta\to 0} \lim_{r\to 1^-}\bigg|\int_{-\delta}^\delta \phi (r(-ie^{it})) dt\bigg| =0.$$
Similarly, using that $W^{1,p}_{\loc}(S^1\setminus\{-i\})\hookrightarrow C^0_{\loc}(S^1\setminus\{-i\})$ we obtain
$$\lim_{r\to 1^-} \phi(r(-ie^{i\delta}))=\phi(-ie^{i\delta}), \quad \text{for every }\delta\in\R\setminus 2\pi\mathbb{Z},$$
so that
\begin{equation}\label{eqdelta}
\lim_{r\to 1^-} \int_r^1 |\phi(\rho(-ie^{i\delta}))|d\rho=0, \quad \text{for every }\delta\in\R\setminus 2\pi\mathbb{Z},
\end{equation}
Using these facts one can easily prove that for any sequence $z_k\to {-i}$, the sequence $(\Phi(z_k))$ is a Cauchy sequence, since one can join $z_k$ to $z_\ell$ by paths $\Sigma_{k,\ell}$ made of arcs as the one described above (pieces of radii and arcs of circles), in such a way that
$$\lim_{k,\ell\to\infty}(\Phi(z_\ell)-\Phi(z_k))=\lim_{k,\ell\to\infty}\int_{\Sigma_{k,\ell}}\phi(w)dw= 0,$$
and one can extend $\Phi$ continuously to $-i$.

For instance, consider the sequence $z_k:= -i(1-k^{-1})\to -i$. For any $k\ge \ell\ge 1$ one case choose the path $\Sigma_{k,\ell}$ made by joining the two arcs
$$\{-i(1-\ell^{-1})e^{it},\, t\in (0, \delta_\ell)\},\quad \{-i(1-k^{-1})e^{it},\, t\in (0,\delta_\ell)\}$$
and the segment
$$\{r(-ie^{i\delta_\ell}),\, r\in (1-\ell^{-1},1-k^{-1})\},$$
where $\delta_\ell\to 0$ slowly enough to have
$$\lim_{k\ge\ell\to\infty}\int^{1-k^{-1}}_{1-\ell^{-1}} |\phi(\rho(-ie^{i\delta_\ell}))|d\rho =0.$$
(One cannot simply join $z_k$ to $z_\ell$ with the segment
$$\{-ir,\,r\in (1-\ell^{-1},1-k^{-1})\}$$
because at this stage we cannot use \eqref{eqdelta} for $\delta=0$.)

\medskip


We now want to prove that $\kappa$ is the curvature $\kappa_\Phi$ of the curve $\Phi|_{S^1\setminus\{-i\}}$. We compute for $\theta\ne -\frac{\pi}{2}$
\[\begin{split}
\frac{\de\Phi}{\de\theta}(e^{i\theta})&=\Phi'(e^{i\theta})ie^{i\theta}=ie^{\lambda+i\rho+i\theta}\\
\frac{\de^2\Phi}{\de\theta^2}(e^{i\theta})&=ie^{\lambda+i\rho+i\theta}\left[\frac{\de\lambda}{\de\theta}(e^{i\theta})+i\left(\frac{\de\rho}{\de\theta}(e^{i\theta})+1\right)\right],\\
\end{split}\]
and, using a well-known formula for the curvature of a curve in the plane,
\[\begin{split}
\kappa_{\Phi}&=\left|\frac{\de\Phi}{\de\theta}\right|^{-3}\left(\frac{\de\Phi}{\de\theta}\right)^\perp \cdot \frac{\de^2\Phi}{\de\theta^2}\\
&=e^{-\lambda} \left(\frac{\de\rho}{\de\theta}+1\right)\\
&= e^{-\lambda}((-\Delta)^\frac12 \lambda+1)\\
&=\kappa,
\end{split}\]
where we used that the vectors $e^{\lambda+i\rho+i\theta}$ and $ie^{\lambda+i\rho+i\theta}$ are orthogonal, and \eqref{lapHilb}.

Finally, \eqref{cond1bis} follows at once from \eqref{sintheta} and \eqref{deflambda2}:
$$|\Phi'(e^{i\theta})|=e^{\lambda(e^{i\theta})}=\frac{e^{u(\Pi (e^{i\theta}))}}{1+\sin\theta} =\frac{2}{1+\Pi(e^{i\theta})^2}e^{u(\Pi (e^{i\theta}))},$$ 
for $e^{i\theta}\ne -i$.
\hfill$\square$

\medskip

\noindent\textbf{Proof of Theorem \ref{trm1}} Given $u$ solving \eqref{liou} according to Proposition \ref{proppull2} the function $\lambda$ defined by \eqref{deflambda2} satisfies
$$(-\Delta)^\frac12 \lambda = e^\lambda -1+\beta\delta_{-i}, $$
 with
$$\beta=2\pi-\int_{\R}(-\Delta)^\frac12 u dx\,.$$
By Theorem \ref{geomequiv} the function $\lambda$ determines a holomorphic immersion $\Phi:\bar D^2\setminus\{-i\}\to \C$ with the property that $\Phi_{S^1\setminus\{-i\}}$ is a curve of curvature $1$ which extends continuously to a closed curve $\Phi|_{S^1}$. Then up to translations $\Phi|_{S^1}$ is a parametrization of the unit circle, possibly with degree $n$ different from $1$. But with Lemma \ref{lemmabla}, together with the fact that $\Phi$ is holomorphic and $\Phi'$ never vanishes in $D^2$, we immediately get that $n=1$, $\beta=0$ and $\Phi$ is a M\"obius diffeomorphism of $\bar D^2$. From that, the explicit form of $\lambda$ and $u$ can be computed as in the proof of Theorem 1.8 in \cite{DMR}. \hfill $\square$

\section{Compactness of the singular Liouville equation in $S^1$ and proof of Theorems \ref{trm2} and \ref{trm3}}\label{sec:comps1}

In order to prove Theorems \ref{trm2} and \ref{trm3} we pull back the Liouville equation \eqref{equk} from $\R$ into $S^1$ using Theorem \ref{geomequiv}, and then prove the following theorem about a singular Liouville equation on $S^1$, whose statement can be easily pulled back onto $\R$. We remark that in dimension $2$ singular Liouville equations arise in the study of Chern-Simons vortices, see e.g. \cite{Tar}.

\begin{Theorem}\label{mainth} Let $(\lambda_k)\subset L^1(S^1)$ be a sequence with
\begin{equation}\label{boundlength}
L_k:=\|e^{\lambda_k}\|_{ L^1(S^1)}\le \bar L
\end{equation}
satisfying  
\begin{equation} \label{liouvfrack}
(-\Delta)^\frac12 \lambda_k=\kappa_k e^{\lambda_k}-1+\beta_{k}\delta_{-i} \quad \text{in }\mathcal{D}'(S^1)
\end{equation}
where $(\beta_k)\subset \R$  is a bounded sequence and $\kappa_k\in L^\infty(S^1)$ satisfies
\begin{equation}\label{boundcurv}
\|\kappa_k\|_{L^{\infty}(S^1)}\le \bar \kappa\,.
\end{equation}
Assume w.l.o.g. that $\beta_{k}\to\beta_\infty$ 
and $\kappa_k\stackrel{*}{\rightharpoonup}\kappa_{\infty}$ in $L^{\infty}(S^1)$. Then, up to a subsequence, $\kappa_k e^{ \lambda_k} \stackrel{*}{\rightharpoonup}\mu $ as Radon measures on $S^1$, and $\kappa_k e^{ \lambda_k} \to \mu$ in $W^{1,p}_{\loc}(S^1\setminus (B\cup\{-i\}))$ where $B:=\{a_1,\ldots, a_N\}$ is a (possibly empty) subset of $S^1\setminus\{-i\}$. Setting
$$\bar\lambda_k:=\frac{1}{2\pi}\int_{S^1}\lambda_kd\theta,$$
$$a_0:=-i,\quad   -\frac{\pi}{2}=\theta_0<\theta_1<\dots<\theta_N<\frac{3\pi}{2} \text{ such that }a_j=e^{i\theta_j},\quad 0\le j\le N,$$
one of the following alternatives holds:
\begin{enumerate}
\item $|\bar\lambda_k|\le C$ and $\mu=\kappa_\infty e^{\lambda_\infty}+\pi(\delta_{a_1}+\dots+\delta_{a_N})+\alpha_0\delta_{-i}$, 
$$\lambda_k\to\lambda_\infty\quad \text{ in }W^{1,p}_{\loc}(S^1\setminus (\{a_0,\dots, a_N\}),$$
with $\lambda_\infty,e^{\lambda_\infty}\in L^1(S^1)$, $\alpha_0\in \R$ and 
\begin{equation} \label{liouvlimit}
(-\Delta)^\frac12 \lambda_\infty=\kappa_{\infty} e^{\lambda_\infty}-1+(\alpha_0+\beta_\infty) \delta_{-i}+\sum_{j=1}^{N}\pi \delta_{a_j}\quad \text{in }S^1\,.
\end{equation}

\item $\bar \lambda_k\to -\infty$ as $k\to\infty$, $\mu= \alpha_0\delta_{a_0}+\dots+\alpha_N\delta_{a_N}$,
$$v_k:=\lambda_k-\bar\lambda_k \rightharpoonup v_\infty \quad \text{ in }W^{1,p}_{\loc}(S^1\setminus\{a_0,\dots, a_N\})\text{ for every }p<\infty,$$
where
$$v_{\infty}(e^{i\theta})= -\sum_{j=0}^N\frac{\alpha_j}{2\pi}\log(2(1-\cos(\theta-\theta_j))) -\frac{\beta_\infty}{2\pi}\log(2(1-\cos(\theta-\theta_0))) $$
solves
\begin{equation} \label{liouvlimit3}
(-\Delta)^\frac12 v_\infty=-1+\sum_{j=0}^N\alpha_j \delta_{a_j}+\beta_\infty\delta_{-i}\quad \text{in }S^1\,.
\end{equation}
Moreover $\alpha_j\ge\pi$ for $1\le j\le N$.\par
\end{enumerate}
If $K_k\ge 0$ for every $k$, then in case 1 we have $N=0$ and in case $2$ we have $\alpha_j>\pi$ for $1\le j\le N$. More precisely, if $K_k\ge 0$ is a neighbourhood of $a_j$ for some $j$ and for every $k$, then we are in case $2$ and $\alpha_j>\pi$.
\end{Theorem}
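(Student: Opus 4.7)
I would organize the proof in four main stages.

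\emph{Reduction to a regular equation and $\varepsilon$-regularity.} Since $\beta_k$ is bounded, by Lemma \ref{lemmafund4} the shifted function $\tilde\lambda_k := \lambda_k + \frac{\beta_k}{2\pi}\log(2(1-\sin\theta))$ satisfies a Liouville equation on $S^1$ without any Dirac contribution on the right-hand side, with a smooth positive weight multiplying $e^{\tilde\lambda_k}$. Next, a Br\'ezis--Merle type exponential integrability estimate for $\lapfr$ on $S^1$ (see \cite{BM,DMR}) furnishes $\varepsilon_0>0$ such that on every open arc $I\subset S^1\setminus\{-i\}$ with $\int_I e^{\lambda_k}\,d\theta<\varepsilon_0$, the oscillation of $\lambda_k$ stays uniformly bounded on compact subarcs. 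The $L^1$ bound $\bar L$ forces the blow-up set $B$ of points where no such $I$ exists to be finite. Extracting a subsequence, $\kappa_k e^{\lambda_k}\weak\mu$ as Radon measures, and elliptic regularity gives $W^{1,p}_{\loc}$ convergence of $\lambda_k$ off $B\cup\{-i\}$.

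\emph{Dichotomy and the explicit form of $v_\infty$.} The alternative is governed by $\bar\lambda_k$. If $\bar\lambda_k$ is bounded we are in case 1 and can pass to the limit in \eqref{liouvfrack} off $B\cup\{-i\}$, obtaining \eqref{liouvlimit} with masses $\alpha_j\ge 0$ still to be identified. If $\bar\lambda_k\to-\infty$, then $e^{\lambda_k}\to 0$ uniformly on compact subsets of $S^1\setminus B$; hence $\mu=\sum_j\alpha_j\delta_{a_j}$ is supported on $B\cup\{-i\}$, and the renormalized $v_k=\lambda_k-\bar\lambda_k$ satisfies a PDE whose right-hand side converges to $-1+\sum_j\alpha_j\delta_{a_j}+\beta_\infty\delta_{-i}$. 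Convolving with the Green kernel $G(\theta)=-\frac{1}{2\pi}\log(2(1-\cos\theta))$ of $\lapfr$ on $S^1$ then yields the claimed explicit formula for $v_\infty$.

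\emph{Half-quantization $\alpha_j\ge\pi$: the main step.} A na\"ive blow-up around $a_j$ cannot capture the full concentrated mass because curvature may oscillate in sign across infinitely many scales. Instead, I use the geometric picture: by Theorem \ref{geomequiv}, each $\lambda_k$ comes from a holomorphic immersion $\Phi_k\colon\bar D^2\setminus\{-i\}\to\C$ whose boundary curve has curvature $\kappa_k$ and length element $e^{\lambda_k}\,d\theta$. Fix a small $\rho>0$ so that $B_\rho(a_j)\cap(B\cup\{-i\})=\{a_j\}$ and close the image arc $\Phi_k(I_\rho)$, with $I_\rho:=S^1\cap B_\rho(a_j)$, by a short chord in $\C$. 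Off $I_\rho$ the $W^{1,p}_{\loc}$ convergence controls $\Phi_k$ uniformly, so the chord carries negligible turning as $\rho\to 0$ and $k\to\infty$. The resulting piecewise $C^1$ closed curve bounds the image under $\Phi_k$ of a neighbourhood of $a_j$ in $\bar D^2$, which is a holomorphic immersion, so the Blank-type rotation result (Theorem \ref{trmBlank}, applied via Proposition \ref{proppull2} and Corollary \ref{corangle}) forces the net turning of that closed curve to be at least $\pi$. Passing to the limit gives $\alpha_j=\lim_{\rho\to 0}\lim_{k\to\infty}\int_{I_\rho}\kappa_k e^{\lambda_k}\,d\theta\ge\pi$.

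\emph{Equality in case 1 and the sharpened statement under $K_k\ge 0$.} In case 1 the limit $\lambda_\infty$ is a genuine function with $e^{\lambda_\infty}\in L^1$, and a careful matching of the local behaviour of $\lambda_\infty$ near $a_j$ (dictated by the Green kernel representation and the equation itself) against the lower bound $\alpha_j\ge\pi$ pins down equality, $\alpha_j=\pi$; this is precisely the borderline case allowed by Theorem \ref{thmpi} in the sign-changing regime. If $K_k\ge 0$ in a neighbourhood of $a_j$, the strict inequality in Theorem \ref{thmpi} applied to the local blow-up profile upgrades $\alpha_j\ge\pi$ to $\alpha_j>\pi$, which is incompatible with case 1 and thus forces case 2. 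The main obstacle is the third step: localising the Blank-type topological argument into a per-point lower bound, while controlling the possibly infinite cascade of sign-changing curvature scales and the boundary contribution from the closing chord.
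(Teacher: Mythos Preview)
Your third stage contains a genuine gap: the Blank-type rotation argument applied to a small arc around $a_j$ yields the \emph{opposite} inequality. Concretely, if you close the arc $I_\rho$ by a curve $\Delta$ in the domain $\bar D^2$ (not by a chord in $\C$; otherwise there is no reason the closed curve bounds an immersion and Theorem~\ref{trmBlank} does not apply), then for finite $k$ the curve $\phi_k:=\Phi_k\circ\Delta$ extends to an immersion of $\bar D^2$ with no boundary singularity, so $r(\phi_k)=1$ exactly and $\int_{S^1}\kappa_{\phi_k}|d\phi_k|=2\pi$. Passing to the limit (in case~1, where $\Phi_\infty$ is an immersion away from $a_j$), one obtains $\int_{S^1\setminus\{a_j\}}\kappa_{\phi_\infty}|d\phi_\infty|=2\pi-\alpha_j$, and Theorem~\ref{trmBlank} applied to $\Phi_\infty\circ F$ gives $r(\phi_\infty)\ge 1$, i.e.\ $(2\pi-\alpha_j+\varepsilon)/2\pi\ge 1$ with exterior angle $\varepsilon\in[-\pi,\pi]$. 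This forces $\alpha_j\le\pi$, not $\alpha_j\ge\pi$. This is exactly the content of the paper's Lemma~\ref{lemmaleqpi}.

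The lower bound $\alpha_j\ge\pi$ is obtained by an entirely different mechanism that your plan omits: in case~2 one performs a pinching analysis of the arc-length reparametrized curves $\gamma_k=\Phi_k\circ\sigma_k$ (Lemmata~\ref{pinchdist}--\ref{lemmapinch}, Proposition~\ref{eqclbis}), finds a M\"obius rescaling $f_k$ for which $\tilde\Phi_k=\Phi_k\circ f_k$ falls into case~1 with a blow-up point carrying the \emph{complementary} mass $\tilde\alpha_1=2\pi-\alpha_j$ (roughly speaking, the curvature on the rest of $S^1$), and then applies the upper bound $\tilde\alpha_1\le\pi$ to conclude $\alpha_j\ge\pi$. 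The equality $\alpha_j=\pi$ in case~1 is then obtained by composing with slowly concentrating M\"obius maps so as to artificially push the sequence into case~2 without changing $\alpha_j$, and invoking the lower bound just proved. Your stage~4 ``careful matching of local behaviour'' does not substitute for this; the Green-function asymptotics alone cannot distinguish $\alpha_j=\pi$ from $\alpha_j<\pi$ without the M\"obius/pinching input. Likewise, the strict inequality under $K_k\ge0$ is not a consequence of Theorem~\ref{thmpi} applied to a blow-up profile (there may be no single profile), but of a direct compactness argument on the nearly-closed nonnegative-curvature arcs $\Phi_k|_{S^1\cap B_\varepsilon(a_j)}$ (end of Lemma~\ref{lemmagepi}).
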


\begin{Remark} The terms $\alpha_0$ and $\beta_\infty$ multiplying $\delta_{-i}$ in \eqref{liouvlimit} and \eqref{liouvlimit3} have different origins: $\beta_\infty$ is the limit of the angles $\beta_k$, while $\alpha_0$ is the amount of curvature concentrating at $-i$. Both coefficients can be $0$ of course.
\end{Remark}

 
  
  

The following theorem is a preliminary step towards the proof of Theorem \ref{mainth}.

\begin{Theorem}\label{convergencephi} 
Let $(\lambda_k)$ be a sequence as in Theorem \ref{mainth}, and let $(\Phi_k)\subset C^0(\bar {D}^2, \C)$ be holomorphic immersions with $\lambda_k=(\log\left|\Phi_k'\right|)|_{S^1\setminus\{-i\}}$ and $\Phi_k(1)=0$, as given by Theorem \ref{geomequiv}). 
Then, up to extracting a subsequence, the following set is finite
\begin{equation}\label{defB}
B:=\left\{a\in S^1\setminus\{-i\}:\lim_{r\to 0^+}\limsup_{k\to\infty} \int_{B(a,r)\cap S^1} |\kappa_k| e^{\lambda_k}d\theta\ge \pi \right\}=\{a_1,\ldots,a_N\}\,,
\end{equation}
and for functions $v_\infty\in L^1(S^1,\R)$ and $\Phi_\infty\in W^{1,2}(D^2,\C)$ we have for $1\le p <\infty$
\begin{equation}\label{convlambdak}
\lambda_k-\bar\lambda_k \rightharpoonup v_\infty \quad\text{in }W^{1,p}_{\loc}(S^1\setminus (B\cup\{-i\}))\,,\quad \bar\lambda_k:=\frac{1}{2\pi}\int_{S^1}\lambda_k d\theta\,,
\end{equation}
and
$\Phi_k \to \Phi_\infty$ weakly in $W^{2,p}_{\loc}(\bar D^2\setminus (B\cup\{-i\}),\C)$, strongly in $W^{1,2}(D^2,\C)$, and in $C^0(\bar D^2,\C)$.
Moreover, one of the following alternatives holds:\par
\begin{enumerate}
\item The sequence $(\bar \lambda_k)\subset\R$ is bounded, $\lambda_k\to
\lambda_\infty$, and $\Phi_\infty$ is a holomorphic immersion of $\bar D^2\setminus (B\cup\{-i\})$. Moreover
\begin{equation}\label{limk1}
\kappa_k e^{\lambda_k}\weak \kappa_\infty e^{\lambda_\infty}+\sum_{j=0}^N\alpha_j\delta_{a_j},\quad \lapfr \lambda_\infty=\kappa_\infty e^{\lambda_\infty}-1+\sum_{j=0}^N\alpha_j\delta_{a_j} +\beta_\infty\delta_{a_0},
\end{equation}
for some $a_0,\dots, a_N\in\R$, where $a_0:=-i$.
\item $\lambda_k\to -\infty$ locally uniformly in $S^1\setminus (B\cup\{-i\})$ as $k\to +\infty,$ and $\Phi_\infty\equiv Q$ for some constant $Q\in \C$. Moreover
\begin{equation}\label{limk2}
\kappa_k e^{\lambda_k}\weak \sum_{j=0}^N\alpha_j\delta_{a_j},\quad \lapfr v_\infty= -1+\sum_{j=0}^N\alpha_j\delta_{a_j} +\beta_\infty\delta_{a_0}
\end{equation}
for some $a_0,\dots, a_N \in\R$.
\end{enumerate}
\end{Theorem}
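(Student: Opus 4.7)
The plan is to combine a fractional Moser--Trudinger small-energy estimate with the geometric representation of Theorem \ref{geomequiv} to propagate regularity of $\lambda_k$ and $\Phi_k$ away from the concentration points, then argue by cases on the average $\bar\lambda_k$. Finiteness of $B$ is immediate: since $\int_{S^1}|\kappa_k|e^{\lambda_k}d\theta\le\bar\kappa\bar L$, the set in \eqref{defB} has at most $\bar\kappa\bar L/\pi$ elements.

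For $a\in S^1\setminus(B\cup\{-i\})$, pick a small arc $I\ni a$ avoiding $-i$ with $\limsup_k\int_I|\kappa_k|e^{\lambda_k}d\theta<\pi-\varepsilon$. Decompose $\lambda_k=f_k+h_k$, where $f_k$ has zero mean and solves $\lapfr f_k=\chi_I\kappa_k e^{\lambda_k}-c_k$ on $S^1$; a fractional Adams-type inequality on $S^1$ gives $\|e^{p|f_k|}\|_{L^1(I)}\le C$ uniformly for some $p>1$. The complementary piece satisfies $\lapfr h_k=(1-\chi_I)\kappa_k e^{\lambda_k}-1+\beta_k\delta_{-i}+c_k$, whose right-hand side is uniformly bounded in $\mathcal{M}(S^1)$ and supported outside a smaller arc $I'\Subset I$, so $h_k$ is bounded in $C^0(I')$. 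Together $\kappa_k e^{\lambda_k}$ is bounded in $L^p(I')$, and $\lapfr$-regularity yields $\lambda_k-\bar\lambda_k$ bounded in $W^{1,p}(I')$. A diagonal extraction over a cover of $S^1\setminus(B\cup\{-i\})$ produces \eqref{convlambdak} with $v_\infty\in L^1(S^1)$ (global $L^1$ follows from inverting $\lapfr$ against $\kappa_k e^{\lambda_k}-1+\beta_k\delta_{-i}$, which has uniformly bounded total mass, against the $\log$-type kernel on $S^1$).

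To transfer to $\Phi_k$, the normalization $\Phi_k(1)=0$ gives $|\Phi_k(e^{i\theta})|\le\int_{\arc(1,\,e^{i\theta})}e^{\lambda_k}d\phi\le\bar L$, and the maximum modulus principle yields $\|\Phi_k\|_{C^0(\bar D^2)}\le\bar L$. A Gauss--Bonnet-with-corners argument on the holomorphic immersion $\Phi_k$ of $\bar D^2\setminus\{-i\}$ shows the rotation index of $\Phi_k|_{S^1}$ equals $\frac{1}{2\pi}(\int_{S^1}\kappa_k e^{\lambda_k}d\theta+\alpha_k)$ with $\alpha_k$ the exterior angle at $\Phi_k(-i)$, which is controlled by $|\beta_k|$; hence the rotation index $n_k$ is bounded. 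By the argument principle the multiplicity of $\Phi_k$ in $D^2$ is at most $n_k$, so $\int_{D^2}|\Phi_k'|^2\le n_k\cdot\pi\bar L^2\le C$. Combined with the local $W^{1,p}$ bounds on $\lambda_k$ and the harmonicity of $\log|\Phi_k'|$ (which Poisson-extends those bounds into the disc), compactness yields a subsequence with $\Phi_k\to\Phi_\infty$ weakly in $W^{2,p}_{\loc}(\bar D^2\setminus(B\cup\{-i\}))$, strongly in $W^{1,2}(D^2)$, and in $C^0(\bar D^2)$.

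Finally, the dichotomy. By Jensen $\bar\lambda_k\le\log(\bar L/(2\pi))$, so either $\bar\lambda_k$ is bounded (Case 1) or $\bar\lambda_k\to-\infty$ along a subsequence (Case 2). In Case 1, $\lambda_k=\bar\lambda_k+v_k\to\lambda_\infty$ locally in $W^{1,p}$, hence $e^{\lambda_k}\to e^{\lambda_\infty}$ locally and the weak limit $\mu$ agrees with $\kappa_\infty e^{\lambda_\infty}d\theta$ outside $B\cup\{a_0\}$, giving \eqref{limk1} upon passing to the limit in \eqref{liouvfrack}; $|\Phi_\infty'|=e^{\lambda_\infty}>0$ off the singular set makes $\Phi_\infty$ a holomorphic immersion there. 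In Case 2, the local boundedness of $v_k$ in $W^{1,p}\hookrightarrow C^0$ combined with $\bar\lambda_k\to-\infty$ forces $\lambda_k\to-\infty$ locally uniformly outside $B\cup\{-i\}$, so $\kappa_k e^{\lambda_k}\to 0$ there and $\mu$ is supported on $B\cup\{a_0\}$; moreover $|\Phi_k'|=e^{\lambda_k}\to 0$ uniformly on compact subsets of $S^1\setminus(B\cup\{-i\})$ makes $\Phi_\infty$ constant on each arc between consecutive singularities, and by $C^0$ continuity on $\bar D^2$ together with the maximum modulus principle $\Phi_\infty$ is the constant $Q=\Phi_\infty(1)=0$. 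The main obstacle is the uniform $W^{1,2}$ bound on $\Phi_k$: controlling the multiplicity of the holomorphic extension in the presence of the boundary singularity at $-i$ requires the Gauss--Bonnet rotation-index identity tying $n_k$ to the total curvature and the angle datum $\beta_k$, both of which are uniformly bounded.
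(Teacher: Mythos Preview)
Your overall strategy --- $\varepsilon$-regularity via a fractional Moser--Trudinger/Adams estimate to get local $W^{1,p}$ bounds on $\lambda_k-\bar\lambda_k$ away from a finite concentration set, then a dichotomy on $\bar\lambda_k$ --- is indeed the route the paper takes (the paper defers most details to \cite{DMR}, only replacing two lemmas, \ref{eLemma} and \ref{L2infty}, to accommodate the singular term $\beta_k\delta_{-i}$). The differences, and the gaps, lie in how you obtain the \emph{global} bounds on $\Phi_k$.

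For the $W^{1,2}(D^2)$ bound your rotation-index argument has an unjustified step: you assert that ``by the argument principle the multiplicity of $\Phi_k$ in $D^2$ is at most $n_k$''. The argument principle identifies the number of preimages of $w$ with the \emph{winding number} of $\gamma_k=\Phi_k|_{S^1}$ around $w$, not with the rotation index $n_k$, and the inequality $\operatorname{wind}(\gamma_k,w)\le r(\gamma_k)$ is not obvious (nor stated anywhere in the paper). The good news is that this detour is unnecessary: since $|\Phi_k'|^2=J\Phi_k$ for holomorphic $\Phi_k$, Stokes gives directly
\[
\int_{D^2}|\Phi_k'|^2=\int_{D^2}d(u_k\,dv_k)=\oint_{S^1}u_k\,dv_k\le \|\Phi_k\|_{L^\infty(S^1)}\int_{S^1}e^{\lambda_k}\,d\theta\le \bar L^2,
\]
writing $\Phi_k=u_k+iv_k$.

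The more serious omission is the $C^0(\bar D^2)$ convergence, which you simply assert follows from ``compactness''. Arzel\`a--Ascoli requires equicontinuity on all of $\bar D^2$, in particular near the blow-up points $a_j$ and near $-i$. But by the very definition of $B$, the measures $e^{\lambda_k}d\theta$ concentrate mass at least $\pi/\bar\kappa$ at each $a_j$, so the arc-length of $\Phi_k|_{S^1}$ over short arcs around $a_j$ does \emph{not} go to zero uniformly in $k$; the naive path-length bound fails there, and neither your $C^0$ bound nor the interior harmonicity of $\Phi_k$ supplies equicontinuity up to these boundary points. This is precisely where the paper invokes Lemma~\ref{L2infty}: the harmonic extension $\tilde\lambda_k=\log|\Phi_k'|$ satisfies
\[
\|\nabla\tilde\lambda_k\|_{L^{(2,\infty)}(D^2)}\le C\big(\|(-\Delta)^{1/2}\lambda_k\|_{L^1(S^1)}+|\beta_k|\big)\le C,
\]
and the resulting uniform oscillation control \eqref{stimaLo2} on small balls near $B\cup\{-i\}$ is what yields equicontinuity of $\Phi_k$ there. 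This Lorentz-space estimate is the analogue of \cite[Lemma~3.4]{DMR} adapted to the $\beta_k\delta_{-i}$ term, and it is the key ingredient missing from your sketch.
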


\begin{proof}
The proof of Theorem \ref{convergencephi} is essentially identical to the proof of Theorem 3.5 is \cite{DMR}, with \cite[Lemmata 3.3, 3.4]{DMR} replaced by Lemmata \ref{eLemma} and \ref{L2infty} below, which take into account the presence of the term $\beta_k\delta_{-i}$ in the equation.

The convergence of $\Phi_k\to \Phi_\infty$ uniformly in $\bar D^2$ is not proven in \cite{DMR}, but follows easily from the convergence in $W^{2,p}_{\loc}(\bar D^2\setminus (B\cup \{-i\}))$ and Lemma \ref{L2infty} below. Indeed \eqref{stimaLo2} shows that the oscillation of $\tilde \lambda_k =\log|\Phi'_k|$ in a neighborhood of the blow-up points must be uniformly bounded, so that one can apply the Theorem of Ascoli-Arzel\'a to the sequence $(\Phi_k)$.

The proof of \eqref{limk1} and \eqref{limk2} is standard, but we sketch it. By the bounds $\|\kappa_ke^{\lambda_k}\|_{L^1}\le C$ we have (up to a subsequence) $\kappa_k e^{\lambda_k}\weak \mu$ as Radon measures, where $\mu-\kappa_\infty e^{\lambda_\infty}$ is supported on $\{a_0,\dots,a_N\}$ and is therefore a linear combination of Dirac's deltas, supported on $\{a_0,\dots, a_N\}$.
\end{proof}

\begin{Lemma}[$\varepsilon$-regularity Lemma]\label{eLemma}
Let $\lambda\in L^1(S^1)$ be a solution of  
 \begin{equation}\label{liou6b}
 (-\Delta)^\frac{1}{2}\lambda=\kappa e^{\lambda}-1+c\delta_{-i},
\end{equation}
with $c\in \R$, $\bar\kappa:=\|\kappa\|_{L^\infty}<\infty$, $L:=\|e^{\lambda}\|_{L^1}<\infty$.
Assume that for some arc $A\subset S^1\setminus \{-i\}$
\begin{equation}\label{condgammaexpb}
 \int_A |\kappa| e^\lambda  d\theta  \le \pi-\ve\,, 
\end{equation}
for some  $\varepsilon>0$.
Then for every arc $A'\Subset A$ with
$\dist (A^c, A')=\delta$
\begin{equation}\label{estinfty}
\|\lambda-\bar \lambda\|_{L^{\infty}(A')} \le C(\delta,\varepsilon,\bar\kappa,L).
\end{equation} 
\end{Lemma}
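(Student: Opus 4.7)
The plan is a Br\'ezis--Merle $\varepsilon$-regularity argument adapted to the half-Laplacian on $S^1$. The starting point is the Green's representation on $S^1$: let $G(\theta):=-\frac{1}{2\pi}\log(2(1-\cos\theta))$, so that $\lapfr G=\delta_0-\frac{1}{2\pi}$. The compatibility condition $\int_{S^1}\lapfr\lambda\,d\theta=0$ applied to \eqref{liou6b} forces $c=2\pi-\int_{S^1}\kappa e^\lambda\,d\theta$ and hence $|c|\le 2\pi+\bar\kappa L$. I split the right-hand side of \eqref{liou6b} as $\chi_A\kappa e^\lambda+\chi_{A^c}\kappa e^\lambda-1+c\delta_{-i}$, producing a decomposition $\lambda-\bar\lambda=w_1+w_2+w_3$ in which $w_1=G*(\chi_A\kappa e^\lambda)$ carries the ``local'' mass, $w_2=G*(\chi_{A^c}\kappa e^\lambda-1)$ collects the far-field contribution, and $w_3=cG(\cdot-(-i))$ is the potential of the Dirac mass, with additive constants absorbed so that each summand has zero mean.

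\medskip

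The pieces $w_2$ and $w_3$ are handled by a direct kernel bound: since $-i\notin\bar A$ and $\dist(A^c,A')=\delta$, we have $|G(\theta-\theta')|\le C(\delta)$ uniformly for $\theta\in A'$ and $\theta'\in A^c\cup\{-i\}$, so $\|w_2\|_{L^\infty(A')}+\|w_3\|_{L^\infty(A')}\le C(\delta,\bar\kappa,L)$. The delicate term is $w_1$. Writing $|\log(2(1-\cos t))|\le -\log(2(1-\cos t))+\log 4$, normalizing $d\mu:=|\kappa|e^\lambda\chi_A\,d\theta/M$ with $M:=\int_A|\kappa|e^\lambda\,d\theta\le\pi-\varepsilon$, and applying Jensen's inequality pointwise yields
\[
\exp(p|w_1(\theta)|)\le C\int_{S^1}(2(1-\cos(\theta-\theta')))^{-pM/(2\pi)}\,d\mu(\theta').
\]
Integrating in $\theta$ by Fubini and using that $t\mapsto(2(1-\cos t))^{-\alpha}$ is integrable on $S^1$ iff $\alpha<1/2$, I obtain $\|e^{p|w_1|}\|_{L^1(S^1)}<\infty$ whenever $pM<\pi$. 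Since $M\le\pi-\varepsilon$, choosing $p:=\pi/(\pi-\varepsilon/2)>1$ gives $\|e^{p|\lambda-\bar\lambda|}\|_{L^1(S^1)}\le C(\delta,\varepsilon,\bar\kappa,L)$.

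\medskip

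The bootstrap to $L^\infty$ is then standard: Jensen gives $e^{\bar\lambda}\le L/(2\pi)$, so the previous estimate yields $\|e^\lambda\|_{L^p(S^1)}\le C(\delta,\varepsilon,\bar\kappa,L)$ and hence $\|\kappa e^\lambda\|_{L^p(S^1)}\le C$. On an intermediate arc $A'\Subset A''\Subset A$ the right-hand side of \eqref{liou6b} lies in $L^p(A'')$ (the Dirac mass at $-i$ being invisible there), so interior regularity for $\lapfr$ on $S^1$ gives $\lambda-\bar\lambda\in W^{1,p}_{\loc}(A'')$, and the one-dimensional embedding $W^{1,p}\hookrightarrow L^\infty$ for $p>1$ produces the required estimate \eqref{estinfty}. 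The main point that requires care is pinning down the threshold $\pi$ in the Br\'ezis--Merle step, which is tied to the $\tfrac{1}{2\pi}$ normalization in $G$ and the integrability condition $\alpha<1/2$ on $S^1$ (rather than $\alpha<1$ in $\R^2$ which would give the familiar $4\pi$); once this Jensen estimate is in hand, the remaining $\varepsilon$-regularity bootstrapping is routine.
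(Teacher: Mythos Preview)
Your proof is correct and follows essentially the same strategy as the paper: the same Green's-function decomposition (your $w_1,w_2,w_3$ match the paper's $\lambda_1,\lambda_2$ and the term $\frac{c}{2\pi}\log(2(1+\sin\theta))$), the same Br\'ezis--Merle exponential integrability for the local piece (the paper cites its Theorem~\ref{MT4}, which you effectively reprove via Jensen), and a bootstrap to $L^\infty$. One small slip: the bound $\|e^{p|\lambda-\bar\lambda|}\|_{L^1(S^1)}\le C$ should read $L^1(A'')$ for an intermediate arc $A'\Subset A''\Subset A$, since $w_2$ and $w_3$ are only controlled away from $A^c\cup\{-i\}$; this is harmless for the local conclusion. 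The only cosmetic difference is the final step: the paper bounds $\lambda_1$ pointwise on $A'$ by splitting the convolution over $A''$ (using $G\in L^q$ and $e^{\lambda_1}\in L^p$) and over $A\setminus A''$ (using $G\in L^\infty$ there), whereas you go through $W^{1,p}$ regularity and the Sobolev embedding --- both are routine, though note that your ``interior regularity for $\lapfr$'' is really the global Green's representation together with $L^p$-boundedness of the periodic Hilbert transform, not a local PDE estimate.
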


\begin{proof} Set
$$f_1=(\kappa e^{\lambda}-1)\chi_A,\quad f_2=(\kappa e^\lambda-1)\chi_{A^c},$$
and define 
$$
\lambda_i(t):=G*f_i(t)=\int_{S^1}G(t-\theta)f_i(\theta)d\theta,\quad i=1,2,
$$
where $G$ is as in Lemma \ref{lemmafund4}. From  \eqref{eqF} and \eqref{rapr} it follows that 
\begin{equation}\label{lambdadec}
\begin{split}
\lambda-\bar \lambda&=G*(\kappa e^\lambda-1)+\frac{c}{2\pi}\log(2(1+\sin\theta))\\
&=\lambda_1 +\lambda_2+\frac{c}{2\pi}\log(2(1+\sin\theta)).
\end{split}
\end{equation}
 
Choose now an arc $A''$ with $A'\Subset A''\Subset A$ and $\dist(A'', A^c)=\dist(A',(A'')^c)=\frac{\delta}{2}$. With \eqref{Gdec} we easily bound
\begin{equation}\label{estu2}
\|\lambda_2\|_{L^\infty(A'')} \le C_1= C_1(\bar\kappa,L,\delta).
\end{equation}
It follows from \eqref{condgammaexpb} and Theorem \ref{MT4} that $\|e^{|\lambda_1|}\|_{L^p(S^1)}\le C_{p,\ve}$ for some $p>1$, and consequently also $e^{\bar \lambda}\le C$. Then for $t\in A'$ we have 
\[\begin{split}
\lambda_1(t)&\le\int_{A}G(t-\theta)(|\kappa| e^{\lambda_1(\theta)}e^{\lambda_2(\theta)+\bar \lambda}-1)d\theta\\
&\le \bar\kappa \bigg(e^{C_1+\bar \lambda}\underbrace{\int_{A''}G(t-\theta)e^{\lambda_1(\theta)}d\theta}_{(1)}+\underbrace{\int_{A\setminus A''}G(t-\theta)e^{\lambda(\theta)}d\theta}_{(2)}+C\bigg)\\
&\le C,
\end{split}\]
where in $(1)$ we use that   $G\in L^q(S^1)$ for $q\in [1,\infty)$ and in $(2)$ we use that $G\in L^{\infty}(A'\times (A\setminus A''))\,.$

Finally notice that for $e^{i\theta}\in A'$ the term $\frac{c}{2\pi}\log(2(1+\sin\theta))$ in \eqref{lambdadec} is bounded depending only on $\delta\ge \dist(A',\{-i\})$.
\end{proof}

\begin{Lemma}\label{L2infty} Let $\lambda: S^1\to S^1$ satisfy
$$(-\Delta)^\frac12 \lambda -c\delta_{-i}\in L^1(S^1)$$
for some $c\in\R$, and let $\tilde \lambda$ be the harmonic extension of $\lambda$ to $D^2$. Then
\begin{equation}\label{stimaLo1}
\|\nabla \tilde \lambda\|_{L^{(2,\infty)}(D^2)}\le C(\|(-\Delta)^\frac12 \lambda\|_{L^1(S^1)}+|c|),
\end{equation}
and for any ball $B_r(x_0)$
\begin{equation}\label{stimaLo2}
\frac{1}{r}\int_{B_r(x_0)\cap D^2}|\nabla \tilde \lambda|dx \le C\|\nabla \tilde \lambda\|_{L^{(2,\infty)}(B_r(x_0)\cap D^2)}.
\end{equation}
\end{Lemma}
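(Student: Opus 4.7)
The plan is to exploit the representation of $\tilde\lambda$ via the harmonic extension of the Green kernel of $\lapfr$ on $S^1$, which reduces both bounds to the analysis of a Riesz-type kernel of order $1$ acting on the measure $\mu := \lapfr\lambda$. Writing $\mu = f + c\delta_{-i}$ with $f\in L^1(S^1)$, one has
\[
|\mu|(S^1) \le \|\lapfr\lambda\|_{L^1(S^1)} + |c|,
\]
where the first norm is understood as the $L^1$-norm of the regular part. Then \eqref{stimaLo1} becomes a weak-$L^2$ bound on a convolution against a $1$-homogeneous kernel based on the boundary, and \eqref{stimaLo2} a clean consequence of H\"older's inequality in Lorentz spaces.

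For the first estimate, using the fundamental solution provided by Lemma \ref{lemmafund4}, which (up to an additive constant) is $G(\theta)=-\frac{1}{\pi}\log|e^{i\theta}-1|$, its harmonic extension into $D^2$ is simply $\tilde G(z,w):=-\frac{1}{\pi}\log|z-w|$ for $w\in S^1$, since $z\mapsto \log|z-w|$ is harmonic in $D^2$. Representing $\lambda=G*\mu+\bar\lambda$ and extending harmonically, I would differentiate under the integral to obtain
\[
\nabla\tilde\lambda(z)=\int_{S^1}\nabla_z\tilde G(z,w)\,d\mu(w),\qquad |\nabla_z\tilde G(z,w)|=\frac{1}{\pi|z-w|}.
\]
For every fixed $w\in S^1$ one has $\bigl\||\cdot -w|^{-1}\bigr\|_{L^{(2,\infty)}(D^2)}\le C_0$ uniformly in $w$, since $|\{z\in D^2:|z-w|^{-1}>t\}|\le \pi t^{-2}$. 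As $L^{(2,\infty)}$ is normable for $p=2>1$ (its quasi-norm is equivalent to a genuine norm), a Minkowski-type integral inequality applied to the Borel measure $|\mu|$ on $S^1$ gives
\[
\|\nabla\tilde\lambda\|_{L^{(2,\infty)}(D^2)}\le C\int_{S^1}\bigl\|\nabla_z\tilde G(\cdot,w)\bigr\|_{L^{(2,\infty)}(D^2)}\,d|\mu|(w)\le CC_0\,|\mu|(S^1),
\]
which is exactly \eqref{stimaLo1}.

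For \eqref{stimaLo2}, I would apply H\"older's inequality for the Lorentz duality $L^{(2,\infty)}\cdot L^{(2,1)}\hookrightarrow L^1$:
\[
\int_{B_r(x_0)\cap D^2}|\nabla\tilde\lambda|\,dx\le C\,\|\nabla\tilde\lambda\|_{L^{(2,\infty)}(B_r(x_0)\cap D^2)}\,\|\mathbf{1}_{B_r(x_0)\cap D^2}\|_{L^{(2,1)}(\R^2)},
\]
combined with the elementary identity $\|\mathbf{1}_A\|_{L^{(2,1)}}=2|A|^{1/2}$ to bound the last factor by $2\sqrt{\pi}\,r$. Dividing through by $r$ yields \eqref{stimaLo2}. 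The only genuine subtlety I foresee is the Minkowski step: strictly speaking $L^{(2,\infty)}$ is only a quasi-normed space, so pulling the Lorentz norm inside a $d|\mu|$-integral requires first passing to the equivalent true norm available because $p=2>1$; once this technicality is dispatched, the rest is routine convolution and H\"older bookkeeping.
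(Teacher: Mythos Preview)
Your argument is correct and follows essentially the same route as the paper: represent $\tilde\lambda$ through the Green kernel, use that $|\nabla_z G(z,w)|\sim|z-w|^{-1}\in L^{(2,\infty)}(D^2)$ uniformly in $w\in S^1$, and then apply O'Neil's Lorentz--H\"older inequality with $\|\mathbf{1}_A\|_{L^{(2,1)}}=2|A|^{1/2}$ for \eqref{stimaLo2}. The only organizational difference is that the paper first peels off the Dirac mass by writing $\tilde\lambda=\tilde\lambda_1+\tfrac{c}{2\pi}\log|z+i|^2$ (so that $\lapfr\lambda_1\in L^1$) and then invokes the Neumann Green function estimate from \cite{CK}, whereas you keep the full measure $\mu=f+c\delta_{-i}$ and handle it in one stroke via Minkowski in the normable space $L^{(2,\infty)}$; your identification of the harmonic extension of the fundamental solution as $-\tfrac{1}{\pi}\log|z-w|$ makes the argument slightly more self-contained.
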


\begin{proof}
Set $\lambda_1:=\lambda-\frac{c}{2\pi}\log2(1+\sin\theta)$, so that
$$(-\Delta)^\frac12 \lambda_1= (-\Delta)^\frac12 \lambda +c\delta_{-i}-\frac{c}{2\pi}=:f\in L^1(S^1).$$
We can then write
$$\tilde\lambda=\tilde \lambda_1 +\frac{c}{2\pi}\log(|z+i|^2),$$
where $\tilde\lambda_1$ and $\log(|z+i|^2)$ are the harmonic extentions to $\bar D^2$ of $\lambda_1$ and $\log(2(1+\sin\theta))$ respectively.

Then using \eqref{fraclapl7} from the appendix we write
 \begin{equation}\label{lambdal2infty}
\tilde\lambda_1(x)=\int_{S^1}G(x,y)\frac{\partial\tilde\lambda_1}{\partial \nu}(y)dy=\int_{S^1}G(x,y)f(y)dy\end{equation}
where $G$ is the Green function associated to the Neumann problem.
It is know that $\nabla_x G(x,y)\in L^{(2,\infty)}(S^1)$ (see e.g. \cite{CK}). Therefore $\nabla \tilde\lambda_1(x)\in L^{(2,\infty)}(D^2)\,$ and since we also have
$$|\nabla\log(|z+i|^2)|\le \frac{C}{|z+i|}\in L^{(2,\infty)}(D^2),$$
\rec{stimaLo1} follows\,.\par
  
The proof of \eqref{stimaLo2} follows from O'Neil's inequality \cite{Oneil}
$$\int_{A}|\nabla \tilde\lambda|dx \le \|\chi_A\|_{L^{(2,1)}(A)} \|\nabla \tilde \lambda\|_{L^{(2,\infty)}(A)}=\sqrt{|A|}\|\nabla \tilde \lambda\|_{L^{(2,\infty)}(A)}$$
for any $A\subset D^2$.
\end{proof}

\subsection{Blow-up and pinching analysis}

Let $\sigma_k:S^1\to S^1$ be a diffeomorphism fixing $-i$ such that $\gamma_k:= \Phi_k\circ \sigma_k$ is in normal parametrization, i.e. $|\dot\gamma_k|$ is constant for every $k$. Since the lengths $L_k$ of the curves $\gamma_k$ satisfy
$$\frac{2\pi}{\bar\kappa}\le L_k:=\int_{S^1}e^{\lambda_k} d\theta\le \bar L,$$ up to adding (upper and lower bounded) constants to $\lambda_k$ we can assume that $L_k=2\pi$ and
$$|\dot \gamma_k(z)|\equiv 1,\quad \text{for every }k\ge 1,\, z\in S^1\setminus\{-i\},$$
where 
$$\dot\gamma_k(z):=\frac{\de \gamma_k(e^{i\theta})}{\de \theta}\bigg|_{e^{i\theta}=z}.$$
This will not change the statement of Theorems \ref{mainth} and \ref{convergencephi}.
Also define an angle function $\varphi_k$ for $\gamma_k$ namely a continuous function $\varphi_k: S^1\setminus\{-i\}$ such that
\begin{equation}\label{defvarphik}
\dot \gamma_k=e^{i\varphi_k}.
\end{equation}
By the boundedness of the curvatures the following limits exist:
\begin{equation}\label{defanglepm}
\varphi_k(-i^\pm):=\lim_{t\to 0^\pm }\varphi_k(-i e^{it}).
\end{equation}
Up to a translation we can assume that $\gamma_k(-i)=0$, so that by Arzel\`a-Ascoli's theorem a subsequence converges to a $W^{2,\infty}$-curve with a possible angle at $-i$. Also for such curve an angle function $\varphi_\infty$ can be defined as above. Up to adding multiples of $2\pi$, or requiring that $\varphi_k(-i^+)\in [0,2\pi)$ for every $k$, we also have
\begin{equation}\label{convvarphik}
\varphi_k\to \varphi_\infty\quad \text{ uniformly in }S^1\setminus\{-i\}.
\end{equation}
Following \cite{DMR} we introduce the following distance function $D_k\colon S^1\times S^1 \to \R^+$.
\begin{eqnarray}\label{dk}
D_{k}(p,q)&=&\inf\bigg\{\left(\int_0^1 |\Phi'_k(\Delta(t))|^2|\Delta'(t)|^2dt\right)^\frac12,\nonumber\\
&&~~~ \Delta\in W^{1,2}([0,1],\bar D^2), ~\Delta(0)=\sigma_k(p), ~\Delta(1)=\sigma_k(q)\bigg\},
\end{eqnarray}
The infimum in (\ref{dk}) is attained by a path $\Delta$ such that
$$|\Phi'_k(\Delta(t))||\Delta'(t)|=const,$$
so that
$$\left(\int_0^1 |\Phi'_k(\Delta(t))|^2|\Delta'(t)|^2dt\right)^\frac12=\int_0^1 |\Phi'_k(\Delta(t))| |\Delta'(t)| dt=:\int_{\Delta_k} |\Phi'_k(z)||dz|.$$
In particular, when bounding $D_k(p,q)$ from above, it is sufficient to bound the \emph{length} of curves $\Phi_k\circ \Delta$. In the sequel we will often construct such curves by patching together different curves and arcs, and it is more convenient to estimate the length than the corresponding $W^{1,2}$-norms.


\begin{Definition}[Pinched point]\label{pinched}
A point $p\in S^1$ is called {\em pinched point} for the sequence $(\gamma_k)$ if there exists $q\in S^1$, $q\neq p$  such that 
$\lim_{k\to+\infty} D_k(p,q)=0$\,. We call $q$ ``conjugate" or ``pinched'' to $p$. We denote by  $\mathcal{P}$ the sets of the pinched points of $\gamma_{\infty}\,.$
\end{Definition}

\begin{Remark}
The definition of pinched point is independent of $\Phi_k$ and $\sigma_k$, as can be easily verified. Moreover being pinched is an equivalence relation.
\end{Remark}

\begin{Remark}
Contrary to the case studied in \cite{DMR} ($\beta_k=0$ for every $k$) in which every pinched point has a unique conjugate (\cite[Lemma 3.12]{DMR}), in our case a pinched point can have several conjugates (see e.g. Fig. \ref{Fpinch}).
\end{Remark}

\begin{Lemma}\label{pinchdist} Let $p^+\in S^1\setminus \{-i\}$ pinched to $p^-\in S^1\setminus \{-i\}$. Then the arc $\M A \subset S^1\setminus \{-i\}$ joining $p^+$ and $p^-$ has length $|\M A|\ge \frac{\pi}{\bar \kappa}$.
\end{Lemma}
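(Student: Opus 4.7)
The plan is to argue by contradiction: assume $|\M A|<\pi/\bar\kappa$, and produce a positive lower bound on the Euclidean chord $|\gamma_k(p^+)-\gamma_k(p^-)|$ that is independent of $k$, contradicting the fact that pinching forces this chord to vanish in the limit.

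First, since $\M A\subset S^1\setminus\{-i\}$, the restriction $\gamma_k|_{\M A}$ avoids the singular point of $\Phi_k|_{S^1}$; by Theorem \ref{geomequiv} applied to each $\lambda_k$, $\gamma_k|_{\M A}\in W^{2,p}_{\loc}(\M A)$ for every $p<\infty$, is parametrized by arclength (by the normalization $|\dot\gamma_k|\equiv 1$), and has geometric curvature bounded in absolute value by $\bar\kappa$. Writing $\dot\gamma_k=e^{i\varphi_k}$ as in \eqref{defvarphik}, this translates to $|\varphi_k'|\le\bar\kappa$ on $\M A$.

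Second, I would parametrize $\M A$ as $\theta\in[0,L]$ with $L=|\M A|$, $\gamma_k(0)=\gamma_k(p^+)$, $\gamma_k(L)=\gamma_k(p^-)$, and project the chord onto the tangent direction at the arclength midpoint. Setting $\psi_k(\theta):=\varphi_k(\theta)-\varphi_k(L/2)$, one has $|\psi_k(\theta)|\le\bar\kappa|\theta-L/2|\le\bar\kappa L/2<\pi/2$ under the standing assumption $L<\pi/\bar\kappa$, so $\cos\psi_k\ge\cos(\bar\kappa|\theta-L/2|)>0$ and
\[
\Bigl|\gamma_k(p^+)-\gamma_k(p^-)\Bigr|\;\ge\;\mathrm{Re}\Bigl(e^{-i\varphi_k(L/2)}\int_0^L e^{i\varphi_k(\theta)}\,d\theta\Bigr)\;\ge\;\int_0^L\cos(\bar\kappa|\theta-L/2|)\,d\theta\;=\;\frac{2\sin(\bar\kappa L/2)}{\bar\kappa}\;>\;0,
\]
which is a positive constant depending only on $L$ and $\bar\kappa$.

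Third, I would combine this with the pinching hypothesis: by definition of $D_k$ in \eqref{dk}, there exist paths $\Delta_k\in W^{1,2}([0,1],\bar D^2)$ from $\sigma_k(p^+)$ to $\sigma_k(p^-)$ whose $\Phi_k$-images have length at most $D_k(p^+,p^-)+o(1)\to 0$; since the endpoints of $\Phi_k\circ\Delta_k$ are precisely $\gamma_k(p^\pm)$, the triangle inequality yields $|\gamma_k(p^+)-\gamma_k(p^-)|\le D_k(p^+,p^-)\to 0$, contradicting the uniform lower bound above. Hence the assumption $L<\pi/\bar\kappa$ is impossible, and the lemma follows.

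The argument is elementary modulo one point of care: one must verify that the $\theta$-parameter on $\M A$ is exactly the arclength parameter of $\gamma_k$ (guaranteed by the normalization $|\dot\gamma_k|\equiv 1$) and that the geometric curvature of $\gamma_k|_{\M A}$ equals $\pm\varphi_k'$, so that the pointwise bound $|\varphi_k'|\le\bar\kappa$ is available; there is no genuine obstacle once the reduction to normal parametrization made at the start of Section~\ref{sec:comps1} is invoked.
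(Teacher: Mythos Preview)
Your proof is correct and follows essentially the same approach as the paper: both reduce to the elementary fact that an arc-length parametrized planar curve with curvature bounded by $\bar\kappa$ and (nearly) coinciding endpoints must have length at least $\pi/\bar\kappa$, and both obtain $|\gamma_k(p^+)-\gamma_k(p^-)|\to 0$ from the pinching hypothesis via the definition of $D_k$. The only difference is presentational: the paper argues directly (the tangent $\dot\gamma_k$ must sweep an arc of length at least $\pi-o(1)$ on $S^1$, hence the curve has length at least $(\pi-o(1))/\bar\kappa$, referring to Lemmata~3.10--3.11 of \cite{DMR}), whereas you argue by contradiction and make the estimate quantitative by projecting the chord onto the tangent direction at the midpoint, yielding the explicit lower bound $2\sin(\bar\kappa L/2)/\bar\kappa$.
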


\begin{proof} Consider the curve $\gamma_k|_{\M A}$. Then $\gamma_k$ has bounded curvature and length independent of $k$, equal to the length of the arc joining $p^+$ to $p^-$ (because we assumed that $L_k=2\pi$, so that $|\dot\gamma_k|=1$).  Moreover, since $p^+$ and $p^-$ are pinched, 
$$\lim_{k\to\infty}|\gamma_k(p^+)-\gamma_k(p^-)|=0.$$
We can now easily see that the length of $\gamma_k$ is at least $\frac{\pi}{\bar \kappa}+o(1)$ by considering the direction of the tangent vector $\dot\gamma_k:\M A \to S^1$. As $k\to\infty$, the arc covered by $\dot\gamma_k$ will have length at least $\pi-o(1)$ (compare for instance the proofs of Lemmata 3.10 and 3.11 in \cite{DMR}), which completes our proof.
\end{proof}

\begin{Remark} Contrary to the case $\beta_k=0$ studied in \cite{DMR}, two pinched points can be arbitrarily close to $-i$ (see Figure \ref{Fpinchclose}).
\end{Remark}

\begin{figure}
\begin{center}
\includegraphics[width=14cm]{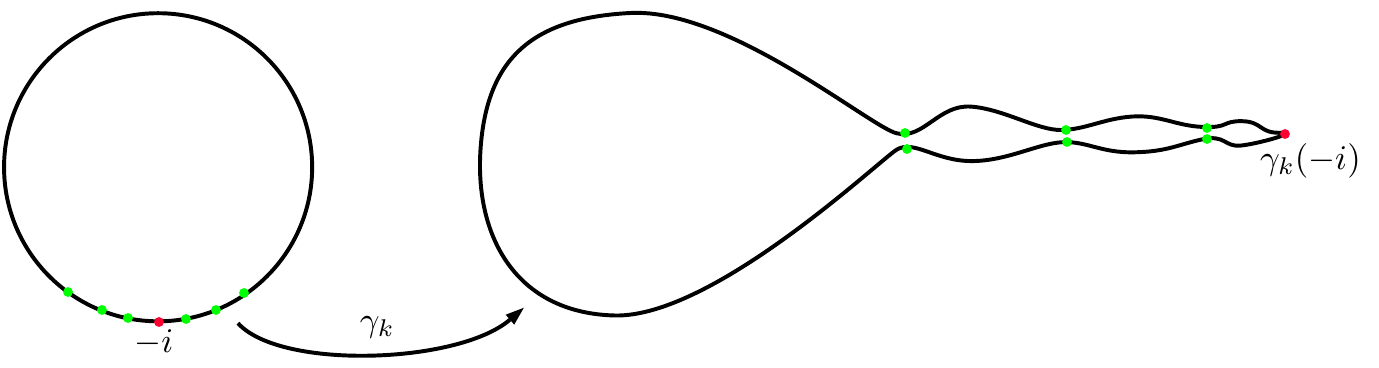}
 
\end{center}
\caption{\small Pinched points can be close to the angle at $-i$.}\label{Fpinchclose}
\end{figure}

\begin{Lemma} The set $\mathcal{P}$ of pinched points is closed in $S^1\setminus \{-i\}$.
\end{Lemma}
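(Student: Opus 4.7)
The plan is to take a sequence $(p_n) \subset \mathcal{P}$ converging to some $p_\infty \in S^1 \setminus \{-i\}$, extract a convergent sequence of conjugates, and verify that the limit conjugate is distinct from $p_\infty$ with vanishing $D_k$-distance. More precisely, for each $n$ Definition \ref{pinched} furnishes $q_n \in S^1 \setminus \{p_n\}$ with $\lim_k D_k(p_n, q_n) = 0$, and by compactness of $S^1$ I may assume $q_n \to q_\infty \in S^1$. To conclude $p_\infty \in \mathcal{P}$ I then need (i) $\lim_k D_k(p_\infty, q_\infty) = 0$ and (ii) $q_\infty \neq p_\infty$.

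For (i) I would exploit the normalization $|\dot\gamma_k| \equiv 1$ on $S^1 \setminus \{-i\}$: choosing as competitor in the definition of $D_k(a,b)$ the path $\Delta = \sigma_k \circ \eta$, where $\eta$ is the shorter arc from $a$ to $b$ on $S^1$, one gets $\Phi_k \circ \Delta = \gamma_k \circ \eta$ with length $|\eta|$, yielding the upper bound $D_k(a, b) \le d_{S^1}(a, b)$, where $d_{S^1}$ is the arc-length distance. Combined with the triangle inequality this gives, for every fixed $n$,
$$D_k(p_\infty, q_\infty) \le d_{S^1}(p_\infty, p_n) + D_k(p_n, q_n) + d_{S^1}(q_n, q_\infty).$$
Taking $\limsup_{k \to \infty}$ kills the middle term, and then letting $n \to \infty$ makes the remaining arc-distances vanish, establishing (i).

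For (ii) I would split into cases according to the behaviour of $(q_n)$. If a subsequence satisfies $q_n \to -i$, then $q_\infty = -i$, which differs from $p_\infty \in S^1 \setminus \{-i\}$ and we are done. Otherwise, up to a subsequence, $q_n$ stays in $S^1 \setminus B(-i, \varepsilon)$ for some $\varepsilon > 0$, so Lemma \ref{pinchdist} applies and gives that the arc $\M A_n \subset S^1 \setminus \{-i\}$ joining $p_n$ to $q_n$ has length at least $\pi/\bar\kappa$; this lower bound is preserved in the limit, forcing $p_\infty \neq q_\infty$.

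The only subtle point is precisely this case split: a priori the conjugates $q_n$ may accumulate at $-i$, where Lemma \ref{pinchdist} does not apply. The resolution is simply that Definition \ref{pinched} admits any point of $S^1 \setminus \{p\}$ as a conjugate, so in that degenerate scenario the limit point $-i$ itself plays the role of conjugate for $p_\infty$. Everything else reduces to the clean intrinsic upper bound $D_k(a,b)\le d_{S^1}(a,b)$ coming from the unit-speed normalization, which is the single conceptual ingredient needed beyond the compactness of $S^1$ and the triangle inequality.
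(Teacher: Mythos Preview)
Your proof is correct and follows essentially the same strategy as the paper: extract a limit conjugate $q_\infty$, use Lemma \ref{pinchdist} to separate it from $p_\infty$, and control $D_k(p_\infty,q_\infty)$ via short boundary arcs. Your packaging of step (i) via the uniform bound $D_k(a,b)\le d_{S^1}(a,b)$ together with the triangle inequality is a bit cleaner than the paper's explicit diagonal path construction, and your case split for (ii) makes explicit the (harmless) possibility $q_\infty=-i$ that the paper glosses over, but the content is the same.
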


\begin{proof} Consider a sequence of points $p_n\to p\ne -i$ and for each of them let $q_n$ be a conjugate. Up to a subsequence we can assume that $q_n\to q\in S^1$ and by Lemma \ref{pinchdist} we have $q\ne p$. Then since
$$|\arc(p_n,p)|+|\arc(q_n,q)|\to 0\quad \text{as }n\to \infty,$$
it is easy, with a diagonal argument, to construct a sequence of paths $\Delta_k$ joining to $\sigma_k(p)$ to $\sigma_k(q)$ made of paths $\Delta_{n,k}$ joining $\sigma_k(p_n)$ to $\sigma_k(q_n)$ and small arcs joining $\sigma_k(p)$ to $\sigma_k(p_n)$ and $\sigma_k(q)$ to $\sigma_k(q_n)$, hence proving that $D_k(p,q)\to 0$, so that $p\in \M P$.
\end{proof}

\begin{Lemma}\label{lemmapinch} Assume that we are in case $2$ or Theorem \ref{convergencephi}. Then there are points $p_0,\dots, p_N\in S^1$ such that
$$\sigma_k^{-1}\to p_j\quad \text{in } L^\infty_{\loc}(\mathrm{arc}(a_j, a_{j+1})), \quad \text{for }j=0,\dots, N$$
(with the convention $a_{N+1}:=a_0=-i$). The points $p_j$ are pinched to each other and distinct, with the possible exception of $p_0$ and $p_N$ which might coincide, in which case $p_0=p_N=-i$ (compare to Fig. \ref{Fpinch}). If this is the case and $N=1$, in general it is not true that $-i$ is a pinched point.
Finally
$$\alpha_j= \varphi_\infty(p_{j})-\varphi_\infty(p_{j-1}).$$
In this formula, if $p_{j-1}=-i$, we replace it by $-i^+$ and if $p_j=-i$ we replace it by $-i^-$, where the definitions of $\varphi_{\infty}(-i^\pm)$ are given by \eqref{defanglepm}.
\end{Lemma}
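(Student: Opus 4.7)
The plan uses both conclusions of case~2 of Theorem~\ref{convergencephi}: $\lambda_k\to-\infty$ locally uniformly off $B\cup\{-i\}$, and $\Phi_k\to Q$ uniformly on $\bar D^2$. Combined with $\int_{S^1}e^{\lambda_k}d\theta=2\pi$, the first yields, after extracting a subsequence, $e^{\lambda_k}d\theta\weak \sum_{j=0}^N\mu_j\delta_{a_j}$ with $\sum_j\mu_j=2\pi$. From $|\kappa_k|\le\bar\kappa$ and the definition \eqref{defB} of $B$ one reads $\mu_j\ge\pi/\bar\kappa>0$ for $j\ge 1$, while $\mu_0\ge 0$ may vanish.

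\medskip

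Since $|\dot\gamma_k|\equiv 1$, $\gamma_k=\Phi_k\circ\sigma_k$, and $\sigma_k(-i)=-i$, one has
$$\sigma_k^{-1}(e^{i\theta})=e^{it_k(\theta)},\qquad t_k(\theta):=-\pi/2+\int_{-\pi/2}^\theta e^{\lambda_k(\psi)}\,d\psi.$$
On compact subsets of $\arc(a_j,a_{j+1})$ the integrand $e^{\lambda_k}$ tends to $0$ uniformly, so $t_k$ converges uniformly to a constant (fixing its value at one point of each arc via a further subsequence), and $\sigma_k^{-1}\to p_j$ on $L^\infty_{\loc}(\arc(a_j,a_{j+1}))$. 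The cyclic arclength $|\arc(p_{j-1},p_j)|$ on $S^1$ equals $\mu_j$, so $p_{j-1}\ne p_j$ whenever $j\ge 1$. If $\mu_0=0$, the weak limit carries no mass at $-i$ and so $t_k(\theta)\to -\pi/2$ as $\theta\to -\pi/2$ from either side; hence both $p_0$ and $p_N$ equal $-i$.

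\medskip

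For the identification $\alpha_j=\varphi_\infty(p_j)-\varphi_\infty(p_{j-1})$, note that $\gamma_k=\Phi_k\circ\sigma_k$ gives $\varphi_k(z)=\phi_k(\sigma_k(z))+\text{const}$ where $\phi_k$ is a continuous angle function of $\Phi_k|_{S^1}$ satisfying $\phi_k'(\theta)=\kappa_k(\theta)e^{\lambda_k(\theta)}$ (the curvature computation already carried out in the proof of Theorem~\ref{geomequiv}); hence
$$\varphi_k(p_j)-\varphi_k(p_{j-1})=\int_{\sigma_k(p_{j-1})}^{\sigma_k(p_j)}\kappa_k\,e^{\lambda_k}\,d\theta.$$
Extracting a further subsequence with $\sigma_k(p_j)\to q_j\in\overline{\arc(a_j,a_{j+1})}$, the limiting integration interval has $a_j$ as its only interior point among the $a_\ell$, so the right-hand side tends to $\alpha_j$ (using $\kappa_ke^{\lambda_k}\weak \sum_\ell\alpha_\ell\delta_{a_\ell}$); the left-hand side tends to $\varphi_\infty(p_j)-\varphi_\infty(p_{j-1})$ by \eqref{convvarphik}, with the one-sided values \eqref{defanglepm} standing in when $p_{j-1}$ or $p_j$ equals $-i$.

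\medskip

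The main obstacle is the pinching $D_k(p_{j-1},p_j)\to 0$, because both $S^1$-arcs connecting $\sigma_k(p_{j-1})$ to $\sigma_k(p_j)$ have $\Phi_k$-length equal to $\mu_j$ or $2\pi-\mu_j$, neither of which vanishes. My strategy is to route the competing path through $D^2$. For a fixed small $\eta>0$, concatenate (a) a short $S^1$-arc from $\sigma_k(p_{j-1})$ to a safe point $b_-\in\arc(a_{j-1},a_j)$; (b) a radial segment of length $\eta$ into $D^2$; (c) an arc along $\{|z|=1-\eta\}$ to the analogous point on the far side of $a_j$; (d) the mirror of (b) and (a) reaching $\sigma_k(p_j)$. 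On (c), Cauchy's estimate applied to the holomorphic $\Phi_k-Q$ on $D^2$ gives $|\Phi_k'(z)|\le C\|\Phi_k-Q\|_{L^\infty(\bar D^2)}/\eta =: C\epsilon_k/\eta$, so the $\Phi_k$-length of (c) is $O(\epsilon_k/\eta)\to 0$. On (a), (b), (d), the decomposition $\tilde\lambda_k=\bar\lambda_k+\tilde v_k$ (with $\bar\lambda_k\to -\infty$ and $\tilde v_k$ locally bounded in a neighborhood of any non-singular boundary point by the $W^{1,p}_{\loc}$-convergence of $v_k$ stated in Theorem~\ref{convergencephi}) forces $|\Phi_k'|=e^{\tilde\lambda_k}\to 0$ uniformly on fixed compact subsets of $\bar D^2\setminus(B\cup\{-i\})$ that include short radial neighborhoods of non-blow-up boundary points. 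The delicate sub-case is when $\sigma_k(p_{j-1})$ itself drifts toward $a_j$; here one exploits the fact that the concentration of $e^{\lambda_k}$ near $a_j$ takes place at a scale strictly smaller than any fixed $\eta$, so that choosing $b_-$ at fixed distance $\eta$ from $a_j$ beyond the essential support of the concentration renders the $\Phi_k$-length of (a) negligible. Finally, the last sentence of the lemma (that $-i$ need not be a pinched point when $N=1$ and $p_0=p_N=-i$) is verified by the explicit example depicted in Fig.~\ref{Fpinch}.
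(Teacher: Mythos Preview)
Your argument is essentially correct and follows the same overall strategy as the paper, but you take a noticeably more elaborate route on two of the three main points, and the ``delicate sub-case'' discussion reflects an unnecessary worry.

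\emph{Pinching.} The paper's proof of $D_k(p_j,p_\ell)\to 0$ is much shorter than your four-segment construction: pick fixed points $b_j\in\arc(a_j,a_{j+1})$, $b_\ell\in\arc(a_\ell,a_{\ell+1})$ and any fixed path $\Delta\subset \bar D^2\setminus\{a_0,\dots,a_N\}$ joining them (a straight line suffices). Since $\sigma_k^{-1}(b_j)\to p_j$, the $S^1$-arc from $\sigma_k(p_j)$ to $b_j$ has $\Phi_k$-length $|\arc(p_j,\sigma_k^{-1}(b_j))|\to 0$; and along the fixed $\Delta$ one uses directly that $\Phi_k\to Q$ in $W^{2,p}_{\loc}(\bar D^2\setminus\{a_0,\dots,a_N\})$, hence $\Phi_k'\to 0$ uniformly on $\Delta$. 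No Cauchy estimate, no radial segments, no harmonic-extension bound on $\tilde v_k$ are needed. Your ``delicate sub-case'' ($\sigma_k(p_{j-1})$ drifting toward $a_j$) is a non-issue: the $\Phi_k$-length of the arc from $\sigma_k(p_{j-1})$ to $b_-$ equals $|\arc(p_{j-1},\sigma_k^{-1}(b_-))|\to 0$ regardless of where $\sigma_k(p_{j-1})$ sits.

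\emph{Angle formula.} The paper avoids the moving endpoints $\sigma_k(p_{j-1}),\sigma_k(p_j)$ altogether: it fixes $a_j^\pm=a_je^{\pm i\delta}$ and computes
\[
\varphi_\infty(p_j)-\varphi_\infty(p_{j-1})=\lim_k\big(\varphi_k(\sigma_k^{-1}(a_j^+))-\varphi_k(\sigma_k^{-1}(a_j^-))\big)=\lim_k\int_{\arc(a_j^-,a_j^+)}\kappa_ke^{\lambda_k}=\alpha_j,
\]
the last step being weak convergence over a \emph{fixed} interval whose endpoints are not atoms of the limit. Your version, integrating over $[\sigma_k(p_{j-1}),\sigma_k(p_j)]$, can be rescued (the discrepancy with the fixed-endpoint integral is bounded by $\bar\kappa\,|\arc(p_{j-1},\sigma_k^{-1}(a_j^-))|\to 0$, etc.), but as written it glosses over the possibility that the limits $q_{j-1},q_j$ land exactly on neighboring atoms $a_{j-1},a_{j+1}$, where weak-$*$ convergence alone would not control the integral.

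\emph{Distinctness and the $p_0=p_N$ case.} Your argument shows $\mu_0=0\Rightarrow p_0=p_N=-i$; the statement in the lemma is the (slightly different) implication $p_0=p_N\Rightarrow p_0=p_N=-i$. This follows more directly from monotonicity of $\sigma_k^{-1}$ together with $\sigma_k^{-1}(-i)=-i$: the $p_j$ are cyclically ordered with $-i$ lying on the arc from $p_N$ to $p_0$, so they can coincide only at $-i$.
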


\begin{figure}
\begin{center}
\includegraphics[width=12cm]{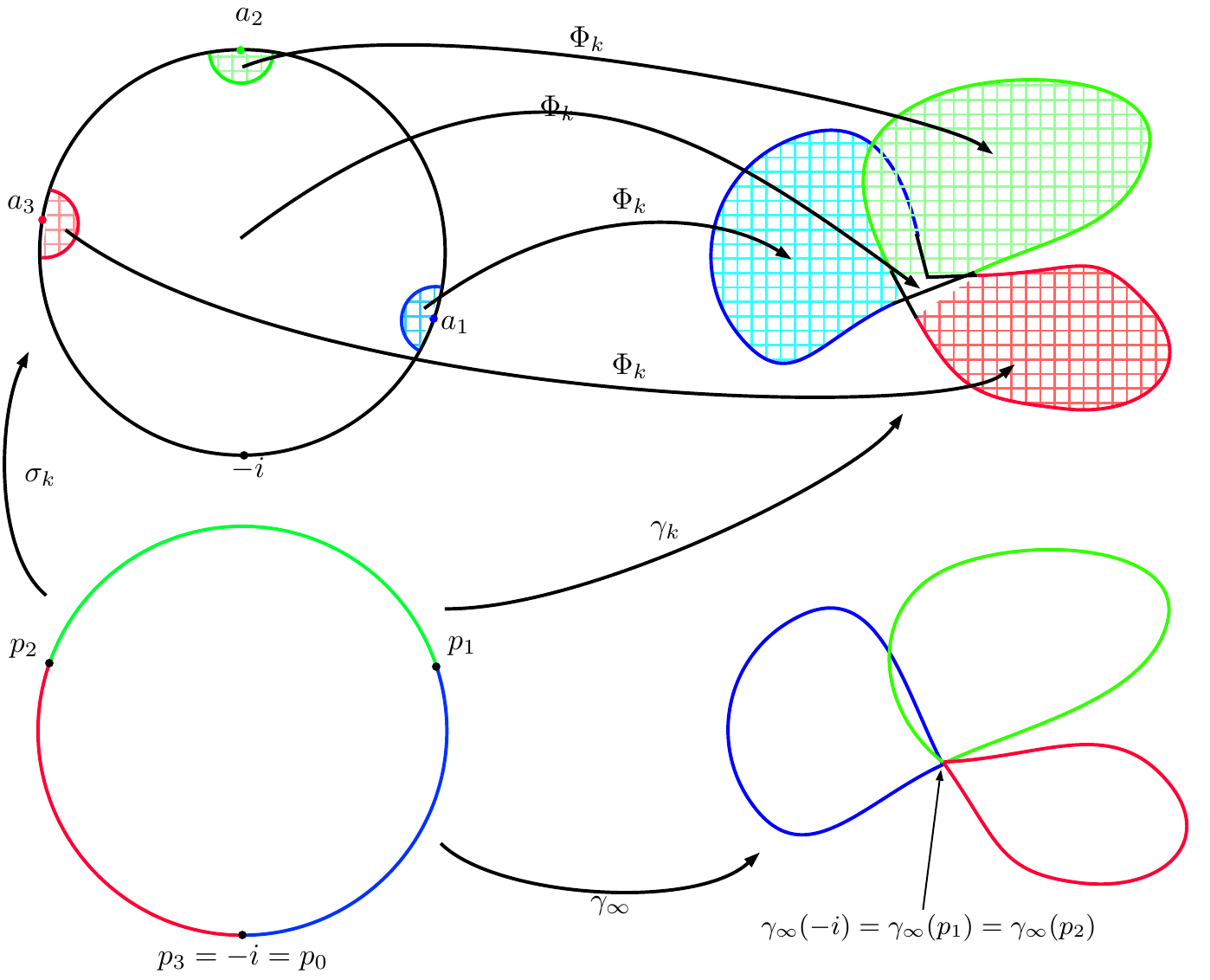}
\end{center}
\caption{\small An example of multiple pinching.}\label{Fpinch}
\end{figure}

\begin{proof} We have $\Phi_k\to Q$ in $W^{2,p}_{\loc}(\bar D^2\setminus \{a_0,\dots,a_N\})$, which implies (considering that $\gamma_k = \Phi_k\circ \sigma_k$ is arc-length parametrized) that
$$\frac{\de \sigma_k^{-1}(e^{i\theta})}{\de\theta}\to 0 \quad \text{uniformly locally in } S^1\setminus \{a_0,\dots, a_N\},$$
hence
$$\sigma_k^{-1}\to p_j\in S^1\quad \text{in }L^\infty_{\loc}(\arc(a_j,a_{j+1})),$$
for points $p_0,\dots, p_N$.
By \eqref{defB} and $\|\kappa_k\|_{L^\infty}\le \bar\kappa$ one has
\begin{align*}
\frac{\pi}{\bar\kappa}&\le \liminf_{k\to \infty}\int_{S_1\cap B_\delta(a_j)}e^{\lambda_k(z)} |dz|\\
&=\liminf_{k\to \infty}\int_{S_1\cap B_\delta(a_j)}|\Phi'(z)| |d z|\\
& =\liminf_{k\to\infty}\int_{\mathrm{arc}(p_{j-1},p_j)}|\dot\gamma_k|d\theta\\
&=|\mathrm{arc}(p_{j-1},p_j)|.
\end{align*}
Since $\sigma_k(-i)=-i$, and $a_0=-i$, we have that $p_\ell\ne p_j$ for $\ell\ne j$, except possibly the case $\ell=0$, $j=N$. To prove that the $p_j$ are pinched, fix $0\le j<\ell\le N$ with $p_j\ne p_\ell$ (unless $N=1$ and $p_0=p_1$, in which case we cannot prove that $p_0$ and $p_1$ are necessarily pinched), choose points $b_j\in \arc(a_j,a_{j+1})$, $b_\ell\in \arc(a_\ell,a_{\ell+1})$. Let $\Delta$ a path joining $b_j$ to $b_\ell$ (for instance a straigth line). Since $\sigma_k^{-1}(b_j)\to p_j$, we have
$$\int_{\arc(\sigma_k(p_j), b_j)} e^{\lambda_k(z)}|dz|=\int_{\arc(\sigma_k(p_j), b_j)} |\Phi_k'(z)||dz|\to 0,$$
and similarly for $\arc(\sigma_k(p_\ell), b_\ell)$. Joining these two arcs with $\Delta$ with obtain a path $\Delta_k$ with $\Delta_k(0)=\sigma_k(p_j)$, $\Delta_k(1)=\sigma_k(p_\ell)$ and length going to zero, so that $D_k(p_j,p_\ell)\to 0$ and $p_j$ is pinched to $p_\ell$.

For the last statement set $a_j^{\pm}= a_j e^{\pm i \delta}$ for $\delta>0$ sufficiently small, so that $a_j^+\in \arc(a_j,a_{j+1})$ and $a_{j}^-\in \arc(a_{j-1}, a_j)$. We have $\sigma_{k}^{-1}(a_j^+)\to p_j$ and $\sigma_{k}^{-1}(a_j^-)\to p_{j-1}$. Then, recalling \eqref{convvarphik}, we compute
\begin{align*}
\varphi_\infty(p_{j})-\varphi_\infty(p_{j-1})&=\lim_{k\to \infty}\varphi_k(\sigma_{k}^{-1}(a_j^+))-\varphi_k(\sigma_{k}^{-1}(a_j^-))\\
&=\lim_{k\to\infty}\int_{\arc(\sigma_{k}^{-1}(a_j^+),\sigma_{k}^{-1}(a_j^-))} \kappa_{\gamma_k}|d\gamma_k|\\
&=\lim_{k\to\infty}\int_{\arc(a_j^-,a_j^+)} \kappa_k e^{\lambda_k}d\theta\\
&=\alpha_j,
\end{align*}
where in the last identity we used \eqref{limk2}.
\end{proof}

We now introduce the following equivalence relation on the set $S^{1}\setminus(\mathcal{P}\cup\{-i\})$, which can geometrically understood by looking at Figure \ref{Fpinchequiv}.

\begin{Definition}\label{equiv}
 Given $q_1,q_2\in S^{1}\setminus (\mathcal{P}\cup\{-i\})$ we say that $q_1\sim q_2$  if
 there exists a sequence of paths $\Delta_k\colon[0,1]\to \bar D^2$ with $\Delta_k(0)=\sigma_k(q_1),
\Delta_k(1)= \sigma_k(q_2)$ such that
\begin{equation}\label{Dk>C}
\liminf_{k\to +\infty} d_k(\Delta_k,\sigma_k(\mathcal{P})) >0\,,\quad d_k(\Delta_k,\sigma_k(\mathcal{P})):=\inf_{\stackrel{t\in [0,1]}{p\in\M P}}d_k(\Delta_k(t),\sigma_k(p)),
\end{equation}
where $d_k:\bar D^2\times \bar D^2\to \R^+$ is the distance defined as
\[\begin{split}
d_{k}(z,w)=&\inf\bigg\{\left(\int_0^1 |\Phi'_k(\Delta(t))|^2|\dot \Delta(t)|^2dt\right)^\frac12, \\
&\qquad \Delta\in W^{1,2}([0,1],\bar D^2), ~\Delta(0)=z, ~\Delta(1)=w\bigg\}.
\end{split}\]
\end{Definition}

\begin{figure}
\begin{center}
\includegraphics[width=11cm]{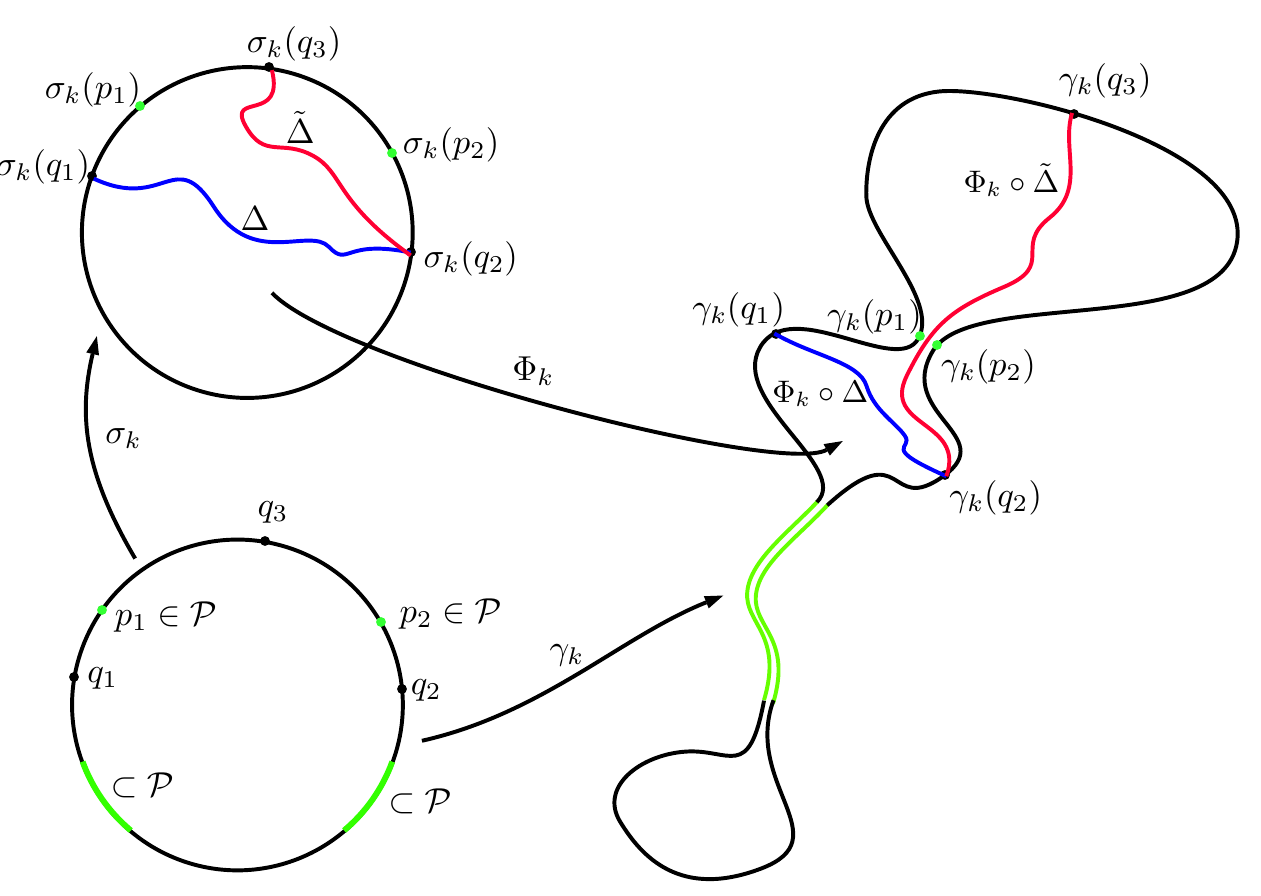}
 
\end{center}
\caption{\small We have $q_1\sim q_2$, but $q_2\not\sim q_3$ because any path $\tilde\Delta$ joining $\sigma_k(q_1)$ and $\sigma_k(q_3)$ passes close to the pinching at $\sigma_k(p_1)$ and $\sigma_k(p_2)$. Here $S^1\setminus\mathcal{P}$ has $4$ connected components and $3$ equivalence classes.}\label{Fpinchequiv}
\end{figure}

 \begin{Proposition}\label{eqcl}
 Let $q\in S^{1}\setminus (\mathcal{P}\cup\{-i\})$, $\mathcal{A}_q$ and $\mathcal{B}_q$ be respectively the    equivalence class and  the connected component   containing $q\,.$ Then $\mathcal{B}_q\subseteq\mathcal{A}_q$.
 \end{Proposition}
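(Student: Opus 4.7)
The plan is to exhibit an explicit path witnessing $q_1 \sim q_2$ for every $q_1, q_2 \in \mathcal{B}_q$. Since $\mathcal{B}_q$ is a connected component of the open set $S^1 \setminus (\mathcal{P} \cup \{-i\})$, it is an arc, and we may fix a continuous injection $\eta: [0,1] \to S^1 \setminus (\mathcal{P} \cup \{-i\})$ with $\eta(0) = q_1$, $\eta(1) = q_2$. Setting $\Delta_k := \sigma_k \circ \eta$, a continuous path in $S^1 \subset \bar D^2$ from $\sigma_k(q_1)$ to $\sigma_k(q_2)$, the task reduces to proving
\begin{equation*}
\liminf_{k \to \infty} \inf_{t \in [0,1],\, p \in \mathcal{P}} d_k(\sigma_k(\eta(t)), \sigma_k(p)) > 0.
\end{equation*}

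The decisive ingredient is the normal-parametrization identity $|\Phi_k'(\sigma_k(e^{i\theta}))| \cdot |d\sigma_k(e^{i\theta})/d\theta| = |\dot\gamma_k| \equiv 1$, which implies that for any $S^1$-arc $\mathcal A$ the $d_k$-length of $\sigma_k(\mathcal A) \subset S^1$ equals the $S^1$-arc length $|\mathcal A|$, uniformly in $k$. In particular $D_k(p, q) \le |\arc(p, q)|$ for all $p, q \in S^1$.

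Suppose for contradiction that along a subsequence there exist $t_k \in [0,1]$ and $p_k \in \mathcal{P}$ with $D_k(\eta(t_k), p_k) \to 0$. Passing to a further subsequence we get $t_k \to t_\infty$ and $p_k \to p_\infty$. Since $\mathcal{P}$ has been shown to be closed in $S^1 \setminus \{-i\}$, we have $p_\infty \in \mathcal{P} \cup \{-i\}$, while $\eta(t_\infty)$ lies in the compact set $\eta([0,1]) \subset S^1 \setminus (\mathcal{P} \cup \{-i\})$; in particular $\eta(t_\infty) \neq p_\infty$. Concatenating the $\sigma_k$-image of the $S^1$-arc from $\eta(t_\infty)$ to $\eta(t_k)$, a near-minimizer of $d_k(\sigma_k(\eta(t_k)), \sigma_k(p_k))$, and the $\sigma_k$-image of the $S^1$-arc from $p_k$ to $p_\infty$ produces a sequence of paths from $\sigma_k(\eta(t_\infty))$ to $\sigma_k(p_\infty)$ of vanishing $d_k$-length (the first and third by the parametrization identity, the middle by hypothesis). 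Hence $D_k(\eta(t_\infty), p_\infty) \to 0$, and by Definition \ref{pinched} this forces $\eta(t_\infty)$ to be pinched, contradicting $\eta(t_\infty) \notin \mathcal{P}$.

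The main subtlety is the borderline case $p_\infty = -i$, where the endpoint of the concatenated path is precisely the singularity of $\Phi_k$. This is harmless: Definition \ref{pinched} permits the conjugate point $q$ to be any element of $S^1 \setminus \{p\}$, and the parametrization identity for the $S^1$-arc from $\sigma_k(p_k)$ to $\sigma_k(-i) = -i$ still gives $d_k$-length $|\arc(p_k, -i)| \to 0$ regardless of the behaviour of $|\Phi_k'|$ near $-i$. Thus no case split is needed, and the argument above establishes $\mathcal{B}_q \subseteq \mathcal{A}_q$.
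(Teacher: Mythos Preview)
Your argument is correct and follows the natural route (which is presumably also what is done in \cite[Proposition~3.15]{DMR}, to which the paper defers): take the boundary arc $\Delta_k=\sigma_k\circ\eta$ and use the normal-parametrization identity $|\Phi_k'(\sigma_k)|\,|\dot\sigma_k|=|\dot\gamma_k|\equiv 1$ to convert $d_k$-lengths along $S^1$ into ordinary arc-lengths, so that a failure of \eqref{Dk>C} produces, via the triangle inequality for $D_k$, a conjugate pair $(\eta(t_\infty),p_\infty)$ with $\eta(t_\infty)\notin\mathcal P$. One minor point worth making explicit is that your contradiction yields $D_k(\eta(t_\infty),p_\infty)\to 0$ only along a subsequence, whereas Definition~\ref{pinched} is phrased with the full limit; this is harmless in the paper's setting (everything has already been extracted so that the relevant limits exist), but strictly speaking you should either note that the pinched set is stable under the further extraction, or observe that the same argument applied to \emph{any} subsequence shows $\limsup_k D_k(\eta(t_\infty),p_\infty)=0$.
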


\begin{proof}
See Proposition 3.15 in \cite{DMR}.
\end{proof}

\begin{figure}
\begin{center}
\includegraphics[width=16cm]{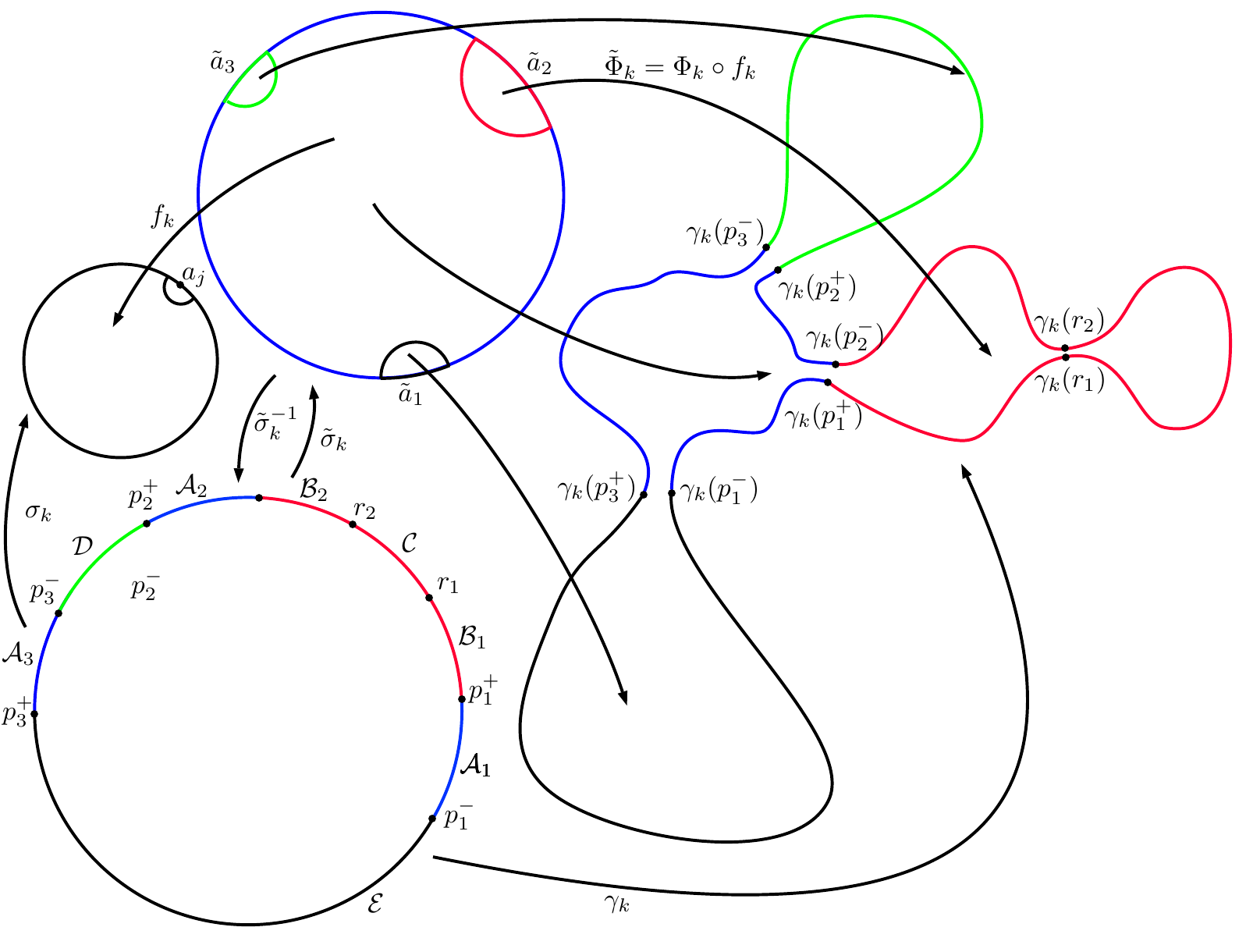}
\end{center}
\caption{\small An example of Proposition \ref{eqclbis} with $M=3$. In the limit $\tilde \Phi_k$ converges to an immersion $\tilde \Phi_\infty$ of $\bar D^2\setminus\{\tilde a_1,\tilde a_2,\tilde a_3\}$ into the region bounded by $\gamma_\infty(\M A)$. Here the equivalence class $\M A$ has the $3$ connected components $\M A_1,\M A_2,\M A_3$, while $\M B, \M C,\M D,\M E$ are the other equivalence classes of $S^1\setminus \M P$. In this example the pinched set is finite, but in general it can be countable, or even have positive measure (see Figure \ref{Fpinchequiv}). This figure also exemplifies Lemma \ref{lemmanest}: the connected component $\M B_1$ of the equivalence class $\M B$ is contained in the arc joining $\M A_1$ and $\M A_1$. Therefore the whole equivalence class $\M B$ is contained in the same arc between $\M A_1$ and $\M A_2$.}\label{Fmobius}
\end{figure}

\begin{Proposition}\label{eqclbis}
Let $\mathcal{A}$ be an equivalence class in $S^{1}\setminus (\mathcal{P}\cup\{-i\})$.
Then there exists a sequence $f_k\colon \bar D^2\to \bar D^2$  of M\"obius transformations such that
$\tilde \Phi_k:=\Phi_k\circ f_k\rightharpoonup \tilde\Phi_\infty$ in $W^{2,p}_{\loc}(\bar D^2\setminus \tilde B$), $\tilde B=\{\tilde a_1,\ldots,\tilde a_M\}$, and letting $\tilde \sigma_k$ be such that $\gamma_k=\tilde\Phi_k\circ\tilde\sigma_k$, one has $\tilde\sigma_k^{-1}\rightharpoonup\psi_\infty$ in $W^{2,p}_{\loc}(S^1\setminus \tilde B)$,
\begin{equation}\label{Acond}
\psi_\infty(S^1\setminus \tilde B)=\mathcal{A}
\end{equation}
and $\gamma_{\infty}(\mathcal{A})=\tilde\Phi_{\infty}(S^1\setminus \tilde B)$. In fact $(\gamma_\infty)_*[\mathcal{A}]=(\tilde \Phi_\infty)_*[S^1\setminus \tilde B]$.
In particular each equivalence class $\M A$ is a finite union of open arcs 
$$\M A_j=\psi_\infty(\arc(\tilde a_j,\tilde a_{j+1}))=\arc (p_j^-,p_j^+),\quad  j=1,\dots, M$$
(ordered in the counter-clockwise direction), with $p_j^+$ pinched to $p_{j+1}^-$ and $q_M^+$ pinched to $q_1^-$. See Figure \ref{Fmobius}.
\end{Proposition}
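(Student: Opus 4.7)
The argument follows the bubble-extraction scheme of \cite[Proposition~3.17]{DMR}, adapted to accommodate the Dirac mass $\beta_k\delta_{-i}$ appearing on the right-hand side of \eqref{liouvfrack}. The idea is to pre-compose $\Phi_k$ with a sequence of Möbius automorphisms of the disk that ``zoom into'' the region of $\bar D^2$ responsible for the equivalence class $\M A$, and then apply the compactness statement already established (Theorem \ref{convergencephi}) to the renormalized sequence.

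First I would pick three distinct points $q_1,q_2,q_3\in\M A$ whose limiting images $\gamma_\infty(q_j)$ are pairwise distinct; such a choice is possible since each connected component of $\M A$ is parametrized non-trivially by $\gamma_\infty$ (cf.\ Lemma \ref{pinchdist}). In degenerate situations where $\M A$ is small or $\gamma_\infty|_{\M A}$ is highly non-injective, one has to fall back on a weaker normalization fixing fewer boundary values and prescribing one or two interior values of $f_k$. Fix three reference points $w_1,w_2,w_3\in S^1$ and let $f_k$ be the unique Möbius automorphism of $\bar D^2$ sending $w_j$ to $\sigma_k(q_j)$. Set $\tilde\Phi_k:=\Phi_k\circ f_k$ and $\tilde\sigma_k:=f_k^{-1}\circ\sigma_k$, so that $\gamma_k=\tilde\Phi_k\circ\tilde\sigma_k$. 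Then $\tilde\Phi_k$ is a holomorphic immersion of $\bar D^2$ minus the singular boundary point $f_k^{-1}(-i)$, and its conformal factor $\tilde\lambda_k:=\log|\tilde\Phi_k'|$ on $S^1$ solves an equation of the form \eqref{liouvfrack} with curvature $\tilde\kappa_k=\kappa_k\circ f_k$ bounded in $L^\infty$ by $\bar\kappa$ and with angle $\tilde\beta_k=\beta_k$, since the angle at the boundary singularity is a conformal invariant.

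Next, I would apply Theorem \ref{convergencephi} to $(\tilde\Phi_k)$: up to a subsequence this yields a finite blow-up set $\tilde B=\{\tilde a_1,\dots,\tilde a_M\}\subset S^1$ (which in general contains $\lim_k f_k^{-1}(-i)$), and limits $\tilde\Phi_k\to\tilde\Phi_\infty$ in $W^{2,p}_{\loc}(\bar D^2\setminus\tilde B)$ together with $\tilde\sigma_k^{-1}\rightharpoonup\psi_\infty$ in $W^{2,p}_{\loc}(S^1\setminus\tilde B)$. The key point is to rule out case~2 of Theorem \ref{convergencephi}: by construction $\tilde\Phi_k(w_j)=\gamma_k(q_j)\to\gamma_\infty(q_j)$ and the three limits are pairwise distinct by our choice of $q_j$, so $\tilde\Phi_\infty$ is non-constant and we are in case~1. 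Hence $\tilde\Phi_\infty$ is a holomorphic immersion of $\bar D^2\setminus\tilde B$, as required.

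Finally, the identification $\psi_\infty(S^1\setminus\tilde B)=\M A$ is obtained by unwinding the definition of~$\sim$: for $q\in\M A$, the equivalence $q\sim q_1$ provides a sequence of paths $\Delta_k$ joining $\sigma_k(q)$ to $\sigma_k(q_1)$ of bounded $d_k$-length and staying away from $\sigma_k(\M P)$; pulling back via $f_k^{-1}$ produces paths from $\tilde\sigma_k(q)$ to $w_1$ that avoid the pinching set of $(\tilde\Phi_k)$ and have uniformly bounded length, so $\tilde\sigma_k(q)$ converges to a point $\psi_\infty^{-1}(q)\in S^1\setminus\tilde B$, giving $q\in\psi_\infty(S^1\setminus\tilde B)$. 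The reverse inclusion follows symmetrically, and the identity of currents $(\gamma_\infty)_\ast[\M A]=(\tilde\Phi_\infty)_\ast[S^1\setminus\tilde B]$ is immediate from $\gamma_k=\tilde\Phi_k\circ\tilde\sigma_k$ passed to the limit. The main obstacle is the non-degeneracy argument above: ensuring the existence of three ``generic'' points $q_j\in\M A$ with distinct images $\gamma_\infty(q_j)$, and treating the degenerate cases where this fails, requires careful use of the equivalence relation and of Lemma \ref{pinchdist}.
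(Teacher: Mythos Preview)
Your proposal is correct and follows essentially the same approach as the paper, which simply defers to the proof of Proposition~3.16 in \cite{DMR} without giving further detail. Your M\"obius renormalization via three boundary points followed by an application of Theorem~\ref{convergencephi} is exactly the scheme of \cite{DMR}; the ``obstacle'' you flag at the end is not serious, since any connected component of $\M A$ is an open arc on which $\gamma_\infty$ is arc-length parametrized and hence locally injective, so three points with pairwise distinct images can always be found inside a single component.
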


\begin{proof}
This follows with the same proof of Proposition 3.16 in \cite{DMR}.
\end{proof}

\begin{Lemma}\label{lemmanest}
The equivalence classes of $S^1\setminus \M P$ are nested, in the sense that if an $\M B_1$ is an arc of the equivalence class $\M B$ included in an arc $\M S$ joining two arcs $\M A_1$ and $\M A_2$ of the equivalence class $\M A$, then $\M B\subset \M S$, see e.g. Figure \ref{Fmobius}.
\end{Lemma}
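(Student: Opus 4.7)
My plan is to argue by contradiction using a topological separation in $\bar D^2$ induced by the pinching paths of the class $\mathcal{A}$. Suppose that some arc $\mathcal{B}_2$ of $\mathcal{B}$ is not contained in $\mathcal{S}$; since $\mathcal{B}_2$ is connected and disjoint from $\mathcal{P}$, and since the two endpoints of $\mathcal{S}$ lie in $\mathcal{P}$, the whole arc $\mathcal{B}_2$ must lie in the complementary arc $\mathcal{S}'$ of $S^1\setminus \bar{\mathcal{S}}$. I will then derive a contradiction from the defining property of the relation $\sim$.

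The geometric input I will use is provided by Proposition \ref{eqclbis}: the two endpoints $p^+$ (end of $\mathcal{A}_1$) and $p^-$ (start of $\mathcal{A}_2$) of $\mathcal{S}$ are pinched to each other. Thus there exist paths $\delta_k\subset \bar D^2$ joining $\sigma_k(p^+)$ and $\sigma_k(p^-)$ whose $\Phi_k$-weighted length tends to zero. After a small perturbation I may assume that $\delta_k$ lies in the open disk $D^2$ except at its endpoints on $S^1$. By the Jordan curve theorem applied to $\bar D^2$, the chord $\delta_k$ then separates $\bar D^2$ into two closed topological disks $\bar R_k,\bar R_k'$ satisfying
\[
 \bar R_k\cap S^1=\sigma_k(\bar{\mathcal{S}}),\qquad \bar R_k'\cap S^1=\sigma_k(\bar{\mathcal{S}'}),\qquad \bar R_k\cap \bar R_k'=\delta_k.
\]

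Next I pick $q_1\in \mathcal{B}_1\subset \mathcal{S}$ and $q_2\in \mathcal{B}_2\subset \mathcal{S}'$, so that $\sigma_k(q_1)\in \bar R_k\setminus \delta_k$ and $\sigma_k(q_2)\in \bar R_k'\setminus \delta_k$. Since $q_1\sim q_2$, Definition \ref{equiv} supplies paths $\Delta_k:[0,1]\to \bar D^2$ from $\sigma_k(q_1)$ to $\sigma_k(q_2)$ with $\liminf_k d_k(\Delta_k,\sigma_k(\mathcal{P}))>0$. But by the separation above, each $\Delta_k$ must meet $\delta_k$ at some point $w_k$, and since the $d_k$-length of $\delta_k$ tends to zero I obtain $d_k(w_k,\sigma_k(p^+))\to 0$. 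As $p^+\in \mathcal{P}$, this contradicts the lower bound on $d_k(\Delta_k,\sigma_k(\mathcal{P}))$, hence $\mathcal{B}_2\subset \mathcal{S}$; since the argument holds for every arc of $\mathcal{B}$, we conclude $\mathcal{B}\subset \mathcal{S}$.

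The main subtlety I anticipate is the clean implementation of the Jordan separation in $\bar D^2$: specifically, justifying that the pinching path $\delta_k$ can be replaced by a chord in the open disk without destroying the vanishing of its $d_k$-length, and that this chord genuinely separates the closed arc $\sigma_k(\bar{\mathcal{S}})$ from $\sigma_k(\bar{\mathcal{S}'})$ on $S^1$. Once this topological point is in place, the contradiction from the definitions of $d_k$ and of the equivalence relation is immediate.
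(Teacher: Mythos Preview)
Your argument is correct and rests on the same topological crossing idea as the paper's proof, but you make the ``dual'' choice of separating curve. The paper takes the equivalence-witnessing path $\tilde\Delta_k$ joining $\sigma_k(\mathcal{A}_1)$ to $\sigma_k(\mathcal{A}_2)$ (which by definition stays $d_k$-far from $\sigma_k(\mathcal{P})$); the $\mathcal{B}$-path $\Delta_k$ must cross it, and by splicing the two at the crossing one produces a path from $\sigma_k(\mathcal{B}_1)$ to $\sigma_k(\mathcal{A}_1)$ still satisfying \eqref{Dk>C}, forcing $\mathcal{A}=\mathcal{B}$. You instead use the pinching path $\delta_k$ between the endpoints of $\mathcal{S}$, whose $d_k$-length vanishes, so the crossing point drags $\Delta_k$ close to $\sigma_k(\mathcal{P})$. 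Both routes work; the paper's is slightly more self-contained, since it only uses Definition~\ref{equiv}, whereas yours invokes Proposition~\ref{eqclbis} to know that the endpoints $p^+,p^-$ of $\mathcal{S}$ are pinched. One small point you should make explicit: Proposition~\ref{eqclbis} gives the pinching only for \emph{consecutive} arcs of $\mathcal{A}$, so you should first reduce to the case where $\mathcal{A}_1,\mathcal{A}_2$ are the arcs of $\mathcal{A}$ immediately adjacent to $\mathcal{B}_1$ (which is harmless since $\mathcal{B}\neq\mathcal{A}$). The Jordan-arc cleanup you flag is routine: one may replace an almost-minimizing $W^{1,2}$ path by an embedded arc in $D^2$ with the same endpoints and no larger $\Phi_k$-length.
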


\begin{proof}
This follows easily from the definition of equivalence classes, since assuming that $\M B$ contains an arc $\M B_2\not\in \M S$, any path $\Delta_k$ connecting $\sigma_k(\M B_1)$ to $\sigma_k(\M B_2)$ would cross any path $\tilde \Delta_k$ connecting $\sigma_k(\M A_1)$ to $\sigma_k(\M A_2)$. By assumptions one can choose such paths so that \eqref{Dk>C} is satisfied. Mixing these paths, one can easily find new paths joining $\sigma_k(\M B_1)$ to $\sigma_k(\M A_1)$ and still satisfying \eqref{Dk>C}, hence proving that $\M A=\M B$.
\end{proof}

\subsection{Proof of Theorem \ref{mainth}}

\begin{Lemma}\label{lemmaleqpi} In case 1 of Theorem \ref{convergencephi} we have $\alpha_j\le \pi$ for $1\le j\le N$.
\end{Lemma}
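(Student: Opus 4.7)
The plan is to compare two local estimates on $|\Phi_\infty'|$ near $a_j$: a singular lower bound of order $|z-a_j|^{-\alpha_j/\pi}$ extracted from the Dirac mass $\alpha_j\delta_{a_j}$ in \eqref{liouvlimit}, against an upper bound of order $\epsilon_r/r$ coming from Cauchy's integral formula applied to $\Phi_\infty-\Phi_\infty(a_j)$, using the continuity $\Phi_k\to\Phi_\infty$ in $C^0(\bar D^2,\C)$ established in Theorem \ref{convergencephi}. Sending $r\to 0^+$ will force $\alpha_j\le\pi$.

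For the lower bound, I would first pin down the asymptotics of $\lambda_\infty$ at $\theta_j$. The fundamental solution of $\lapfr$ on $S^1$ is $G(\theta)=-\frac{1}{2\pi}\log(2(1-\cos\theta))\sim-\frac{1}{\pi}\log|\theta|$, so by Lemma \ref{lemmafund4}, subtracting from $\lambda_\infty$ the Green's-function contributions of every Dirac mass appearing in \eqref{liouvlimit} leaves a remainder whose $\lapfr$ equals $\kappa_\infty e^{\lambda_\infty}-1+\mathrm{const}\in L^1(S^1)$. Since $\kappa_\infty\in L^\infty$, $e^{\lambda_\infty}\in L^1(S^1)$, and $G\in L^q(S^1)$ for every $q<\infty$, a convolution estimate shows this remainder to be bounded near $\theta_j$; the Green's-function contributions from the points $a_\ell$ with $\ell\ne j$ are continuous at $\theta_j$. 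Hence
\[
\lambda_\infty(\theta)=-\frac{\alpha_j}{\pi}\log|e^{i\theta}-a_j|+\rho_j(\theta),\qquad\rho_j\in L^\infty\text{ near }\theta_j.
\]
Since $\Phi_\infty'$ is holomorphic and non-vanishing on $D^2$, $\log|\Phi_\infty'|$ is the harmonic extension of $\lambda_\infty$; as $\log|z-a_j|$ is the harmonic extension of $\log|e^{i\theta}-a_j|$ (being harmonic on $D^2$ since $a_j\notin D^2$), a Poisson-formula bound on the harmonic extension of $\rho_j$ yields
\[
|\Phi_\infty'(z)|\ge c\,|z-a_j|^{-\alpha_j/\pi}
\]
for $z\in D^2$ with $\dist(z,\de D^2)$ comparable to $|z-a_j|$.

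For the upper bound, continuity of $\Phi_\infty$ at $a_j$ gives $\epsilon_r:=\sup_{|z-a_j|\le 2r,\,z\in\bar D^2}|\Phi_\infty(z)-\Phi_\infty(a_j)|\to 0$ as $r\to 0^+$. Choosing $z_r\in D^2$ with $|z_r-a_j|=r$ and $\dist(z_r,\de D^2)\ge r/2$, and applying Cauchy's integral formula to $h:=\Phi_\infty-\Phi_\infty(a_j)$ on the circle $\{|w-z_r|=r/2\}\subset D^2$,
\[
|\Phi_\infty'(z_r)|=|h'(z_r)|\le \frac{C\,\epsilon_{2r}}{r}.
\]
Matched with the lower bound, $c\,r^{1-\alpha_j/\pi}\le C\,\epsilon_{2r}\to 0$ as $r\to 0^+$; if $\alpha_j>\pi$ the left-hand side tends to $+\infty$, a contradiction, so $\alpha_j\le\pi$. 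The main technical hurdle is the boundedness of the remainder $\rho_j$ and of its harmonic extension at the nontangential interior points $z_r$, which rests on a careful localization of the convolution $G*(\kappa_\infty e^{\lambda_\infty})$ combined with control of the Poisson-kernel tails against the $L^1$-norm of $\rho_j$ on $S^1$.
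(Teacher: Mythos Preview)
Your approach is analytic and quite different from the paper's, which is geometric: the paper restricts $\Phi_k$ to a smooth Jordan curve $\Delta$ through $a_j$, uses that the rotation index of $\Phi_k\circ\Delta$ is $1$, passes to the limit, and then invokes Theorem~\ref{trmBlank} (rotation index $\ge 1$ for an immersion with one boundary singularity) to bound the exterior angle, giving $\alpha_j\le\pi$.

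There is, however, a genuine gap in your argument. The claim that the remainder $\rho_j:=\lambda_\infty+\tfrac{\alpha_j}{\pi}\log|e^{i\theta}-a_j|$ is bounded near $\theta_j$ does \emph{not} follow from ``$G\in L^q$ for all $q<\infty$ and $\kappa_\infty e^{\lambda_\infty}\in L^1$'': convolution of a logarithmic kernel with $L^1$ data lands in every $L^q$ (indeed in BMO), but not in $L^\infty$. One needs $e^{\lambda_\infty}\in L^p_{\loc}$ near $\theta_j$ for some $p>1$, which is not known a~priori. Worse, the lower bound you want is circular: if $\alpha_j\ge\pi$ and $\rho_j$ were bounded from below, then $e^{\lambda_\infty}\ge c\,|e^{i\theta}-a_j|^{-\alpha_j/\pi}$ near $\theta_j$, which is \emph{not} in $L^1$, contradicting $e^{\lambda_\infty}\in L^1(S^1)$. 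So precisely in the regime $\alpha_j\ge\pi$ where you seek a contradiction, the integrability of $e^{\lambda_\infty}$ forces $\rho_j\to-\infty$ and your lower bound $|\Phi_\infty'(z_r)|\ge c\,r^{-\alpha_j/\pi}$ collapses. (When $\kappa_\infty\ge 0$ one does get $G*(\kappa_\infty e^{\lambda_\infty})$ bounded below, since $G$ is bounded below; but with sign-changing $\kappa_\infty$---which is the whole point of the lemma in this paper---the negative part of $\kappa_\infty e^{\lambda_\infty}$ can concentrate at $\theta_j$ and drive $G*f$ to $-\infty$ there.)

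A quick sanity check confirms the gap: your comparison $c\,r^{1-\alpha_j/\pi}\le C\epsilon_{2r}$ would actually yield $\alpha_j<\pi$ \emph{strictly} (for $\alpha_j=\pi$ the left side is a fixed positive constant while the right side tends to $0$). But the completed proof of Theorem~\ref{mainth} shows that in case~1 one always has $\alpha_j=\pi$, so your argument, as stated, would prove too much. The paper's topological route via Theorem~\ref{trmBlank} sidesteps this entirely by never attempting a pointwise estimate on $|\Phi_\infty'|$.
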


\begin{proof}
Assume that we are in case 1 of Theorem \ref{convergencephi}. Then up to a subsequence $\lambda_k\to\lambda_\infty$ in $W^{1,p}_{\loc}(S^1\setminus \{a_0,\dots,a_N\})$ and
\begin{equation}\label{defmu}
\kappa_k e^{\lambda_k}\weak \mu=\kappa_\infty e^{\lambda_\infty} +\sum_{j=0}^N\alpha_j\delta_{a_j},
\end{equation}
for some $\alpha_j\in \R$, $0\le j\le N$.
Choose a blow-up point $a_j\in\{a_1,\dots, a_N\}$. We want to prove that $\alpha_j\le \pi$.

\begin{figure}
\begin{center}
\includegraphics[width=15cm]{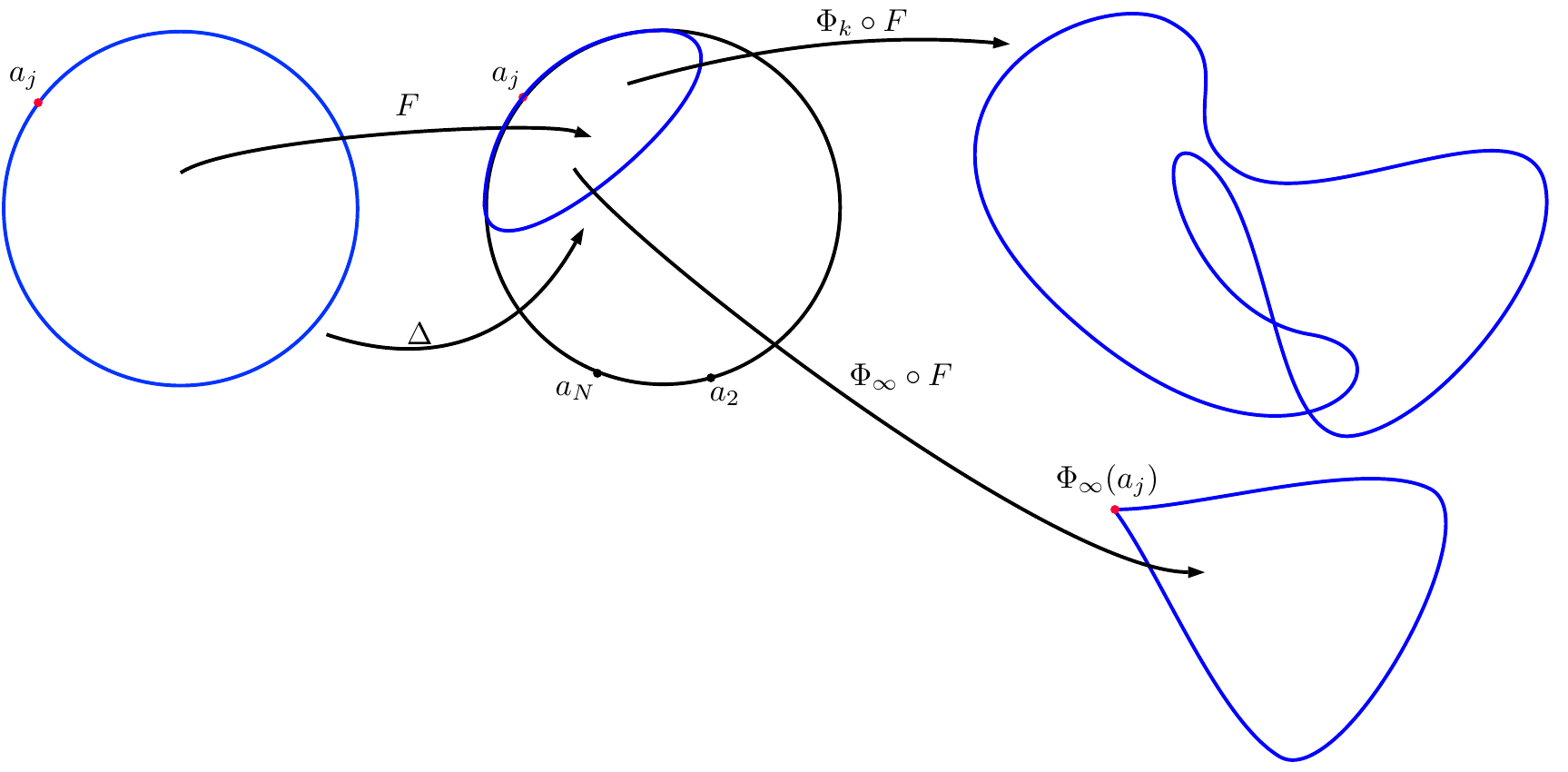}
 
\end{center}
\caption{\small Since $\phi_\infty$ can be extended to an immersion $F\circ \Phi_\infty$ only singular at $a_1$, one has $r(\phi_\infty)\ge 1$}\label{FDelta}
\end{figure}

Let $A=\arc(a_je^{-i\delta}, a_je^{i\delta})$, where $\delta$ is fixed but so small that $A\cap \{a_0,\dots,a_N\}=a_j$. Let now $\Delta: S^1\to \bar D^2$ be a given  regular $C^2$-Jordan curve with $\sigma(S^1)\cap \de \bar D^2= \bar A$, positively oriented. Assume also that $\Delta(z)=z$ for $z\in A$ (in other words $\Delta$ extends the arc $A$ to a regular Jordan curve). Consider now the closed curves $\phi_k:=\Phi_k\circ \Delta$. Since $\Delta$ can be extended to an immersion $F$ of $\bar D^2$ into $\bar D^2$, then $\phi_k$ can be extended to an immersion $\Phi_k\circ F$ of $\bar D^2$ into $\C$, with rotation index $r(\phi_k)=1$, see Definition \ref{defrot}. In particular, since $\phi_k$ does not have corners, it follows easily that
$$2\pi=\int_{S^1}\kappa_{\phi_k} |d\phi_k|,$$
where $\kappa_{\phi_k}$ denotes the curvature of $\phi_k$. On the other hand, our definitions and \eqref{defmu} imply that
$$\int_A \kappa_{\phi_k}|d\phi_k| =\int_A \kappa_k(z) e^{\lambda_k(z)}|dz|\to \int_A \kappa_\infty(z)e^{\lambda_\infty(z)}|dz|+\alpha_j\quad \text{as }k\to\infty.$$
Since away from $a_1$ the convergence of $\phi_k$ to $\phi_\infty:=\Phi_\infty\circ \Delta$ is regular enough, we obtain
$$2\pi=\int_{S^1}\kappa_{\phi_k} |d\phi_k| \to \int_{S^1\setminus \{a_j\}}\kappa_{\phi_\infty} |d\phi_\infty|+\alpha_j,$$
i.e.
$$\int_{S^1\setminus \{a_1\}}\kappa_{\phi_\infty} |d\phi_\infty|=2\pi-\alpha_j.$$
On the other hand, since $\phi_\infty$ can be extended to an immersion $\Phi_\infty\circ F\in C^1_{\loc}(\bar D^2\setminus \{a_1\}) \cap C^0(\bar D^2)$ with $J(\Phi_\infty\circ F)>0$ in $\bar D^2\setminus\{a_1\}$, by Theorem \ref{trmBlank} we infer that $r(\phi_\infty)\ge 1$. Looking at the definition of $r(\phi_\infty)$ (see Definition \ref{defrot} with $N=1$, $b_1=a_j$), we obtain that for a function $\varphi \in W^{1,1}(S^1\setminus \{a_j\})$ such that
$$\frac{\dot \phi_\infty}{|\dot \phi_\infty|}=e^{i\varphi},\quad \text{on }S^1\setminus \{a_j\}$$
(here $\dot \phi_\infty$ is the derivative with respect to the arc-length on $S^1$, taken with the usual counter-clockwise orientation), then
$$\varphi(a_j^-)- \varphi(a_j^+)=\int_{S^1\setminus \{a_j\}}\kappa_{\phi_\infty} |d\phi_\infty|= 2\pi-\alpha_j.$$
On the other hand
$$1\le r(\phi_\infty):=\frac{\varphi(a_j^-)- \varphi(a_j^+)+\ve}{2\pi} =\frac{2\pi-\alpha_j +\ve}{2\pi},$$
where $\ve\in [-\pi,\pi]$ is the exterior angle formed between $\dot \phi_\infty(a_j^-)$ and $\dot\phi_\infty(a_j^+)$. Then we conclude that
$$\alpha_j\le \ve\le \pi.$$
\end{proof}


\begin{Lemma}\label{lemmagepi} Assume that we are in case 2 of Proposition \ref{convergencephi}. Then $\alpha_j\ge \pi$ for $j=1,\dots,N$.  Moreover if for some $1\le j\le N$ and some $\ve>0$ we have
\begin{equation}\label{Kpos}
K_k|_{S^1\cap B_\ve(\alpha_j)}\ge 0,\quad \text{for every k},
\end{equation}
then $\alpha_j>\pi$.
\end{Lemma}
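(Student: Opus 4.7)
The plan is to realize the closed loop $\gamma_\infty|_{\arc(p_{j-1},p_j)}$ as the boundary of a holomorphic immersion and apply Theorem \ref{trmBlank}. By Lemma \ref{lemmapinch} we have $\alpha_j=\varphi_\infty(p_j)-\varphi_\infty(p_{j-1})$, and since $p_{j-1}$ and $p_j$ are pinched, $\gamma_\infty(p_{j-1})=\gamma_\infty(p_j)$. Thus $\gamma_\infty|_{\arc(p_{j-1},p_j)}$ is a closed loop in $\C$, piecewise $C^1$ away from its base point, whose total smooth tangent rotation equals exactly $\alpha_j$. To get $\alpha_j\ge\pi$ it suffices to show that this loop has rotation index at least $1$, because the exterior angle at the base point is bounded in absolute value by $\pi$.

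To construct the required immersion I would use the M\"obius rescaling of Proposition \ref{eqclbis}. Using the nested structure of equivalence classes (Lemma \ref{lemmanest}) I would descend, if necessary, through the hierarchy to an innermost single-component equivalence class $\M A=\arc(q^-,q^+)$ whose endpoints are pinched to each other and whose image under $\sigma_k$ concentrates curvature mass only at $a_j$. Proposition \ref{eqclbis} then produces M\"obius transformations $f_k$ such that $\tilde\Phi_k:=\Phi_k\circ f_k\to\tilde\Phi_\infty$ in $W^{2,p}_{\loc}(\bar D^2\setminus\{\tilde a_1\})$, with $\tilde\Phi_\infty$ a holomorphic immersion of $\bar D^2\setminus\{\tilde a_1\}$ whose boundary $\tilde\Phi_\infty|_{S^1\setminus\{\tilde a_1\}}$ parametrizes the loop. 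Theorem \ref{trmBlank} applied to $\tilde\Phi_\infty|_{S^1}$ gives rotation index $\ge 1$, and its total rotation decomposes as the smooth rotation on $\arc(q^-,q^+)$ --- which by weak convergence of $\kappa_k e^{\lambda_k}$ equals $\alpha_j$ --- plus the exterior angle $\varepsilon\in[-\pi,\pi]$ at $\tilde a_1$. Thus $\alpha_j+\varepsilon\ge 2\pi$, hence $\alpha_j\ge\pi$.

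For the strict inequality when $K_k\ge 0$ near $a_j$, the weak-$*$ limit $\kappa_\infty$ of $\kappa_k$ is non-negative on the bubble boundary. Via Theorem \ref{geomequiv} (applied in reverse, after composing with a M\"obius sending $\tilde a_1$ to $-i$ and then stereographically projecting), $\tilde\Phi_\infty$ corresponds to a solution $\tilde u\in L_{1/2}(\R)$ of $\lapfr\tilde u=\tilde K e^{\tilde u}$ with $\tilde K\ge 0$, $e^{\tilde u}\in L^1(\R)$, and $\int_\R \tilde K e^{\tilde u}\,dx=\alpha_j$; the strict part of Theorem \ref{thmpi} then forces $\alpha_j>\pi$.

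The hard part is the combinatorial descent through the nested hierarchy to isolate a single-component equivalence class whose $\sigma_k$-image captures exactly the curvature $\alpha_j$ at $a_j$: if $\arc(p_{j-1},p_j)$ contains additional pinched points (produced by fine-scale oscillations of $K_k$), the relevant class has several components, the bubble boundary has several corners, and Theorem \ref{trmBlank} alone yields only $\sum_i R_i\ge(2-M)\pi$, which is useless for $M\ge 2$. One must then iterate the construction on each nested level or pair up the contributions of inner bubbles using Lemma \ref{lemmanest}, showing that each nested bubble's smooth rotation is itself $\ge\pi$ and that these bounds combine to give the bound on the outermost arc $\arc(p_{j-1},p_j)$.
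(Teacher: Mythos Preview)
Your first paragraph is right: by Lemma \ref{lemmapinch}, $\alpha_j=\varphi_\infty(p_j)-\varphi_\infty(p_{j-1})$, and the game is to bound this by $\pi$ from below. But the argument you then sketch has a real gap, and the ``iteration'' you propose in the last paragraph is not how the paper closes it.

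\textbf{The gap.} Descending to an \emph{innermost} single-component class $\M A=\arc(q^-,q^+)\subset\arc(p_{j-1},p_j)$ gives you, after M\"obius rescaling, an immersion with one boundary singularity whose smooth boundary rotation is $\varphi_\infty(q^+)-\varphi_\infty(q^-)$, not $\alpha_j$. Unless $q^-=p_{j-1}$ and $q^+=p_j$ (no interior pinched points), this is strictly less than $\alpha_j$, so Theorem \ref{trmBlank} applied to that bubble tells you nothing about $\alpha_j$. Your final paragraph concedes this and suggests combining bounds from nested bubbles, but no such combination works: each inner bubble contributes a rotation $\ge\pi$, yet the contributions of nested bubbles are not additive in any way that forces $\alpha_j\ge\pi$.

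\textbf{What the paper does instead.} Take the equivalence class $\M A$ containing the two arcs \emph{immediately adjacent} to $p_{j-1}$ and $p_j$ from inside $\arc(p_{j-1},p_j)$ (by Lemma \ref{lemmanest} they lie in the same class). Apply Proposition \ref{eqclbis}: the rescaled $\tilde\Phi_k$ converge to a genuine immersion $\tilde\Phi_\infty$ with $M$ boundary singularities $\tilde a_1,\dots,\tilde a_M$, so we are in \emph{case 1} of Theorem \ref{convergencephi}. Now comes the key duality: at the singular point $\tilde a_1$ (the one separating $\M A_M$ from $\M A_1$), the curvature that concentrates is exactly the tangent rotation of $\gamma_\infty$ along the \emph{complementary} arc from $p_j$ back to $p_{j-1}$ through $-i$, i.e.\ $\tilde\alpha_1=2\pi-\alpha_j$. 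Lemma \ref{lemmaleqpi} (the case-1 upper bound, itself proved via Theorem \ref{trmBlank}) gives $\tilde\alpha_1\le\pi$, hence $\alpha_j\ge\pi$. No induction on the nested hierarchy is needed; the whole interior structure is absorbed into the \emph{other} singular points $\tilde a_2,\dots,\tilde a_M$, which play no role. The case where $p_{j-1}$ or $p_j$ is a limit of pinched points is then handled by an $\varepsilon$-approximation.

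\textbf{The strict inequality.} Your route through Theorem \ref{thmpi} is also problematic: the bubble $\tilde\Phi_\infty$ generically has $M\ge 2$ singularities and there is no clean correspondence between its total curvature and $\alpha_j$; moreover non-negativity of $\kappa_k$ near $a_j$ does not obviously transfer to the curvature of $\tilde\Phi_\infty|_{S^1}$. The paper avoids bubbles entirely here: since $\Phi_k\to Q$ away from $a_j$, the curves $\Phi_k|_{S^1\cap B_\ve(a_j)}$ are almost closed, have lengths $\le\bar L$ and non-negative curvatures $\le\bar\kappa$; a direct Arzel\`a--Ascoli compactness argument then shows their total curvature is $\ge\pi+C(\bar L,\bar\kappa)$ for a fixed $C>0$, giving $\alpha_j>\pi$.
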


\begin{proof} According to Lemma \ref{lemmapinch} we have to prove that
$$\varphi_\infty(p_{j})-\varphi_\infty(p_{j-1})\ge \pi.$$
Let us first assume that for some $\delta>0$ we have
$$\arc(p_{j-1}, p_{j-1}e^{i\delta})\cup\arc(p_{j}e^{-i\delta}, p_{j})\subset S^1\setminus\mathcal{P}.$$
(That this is not always the case can be seen e.g. in Figure \ref{Fpinchcomp}, where two pinched points $p_1$ and $p_2$ belong to arcs of pinched points of positive measure.)  Since the equivalence classes in $S^1\setminus \mathcal{P}$ are nested by Lemma \ref{lemmanest}, we have that
$$\arc(p_{j-1}, p_{j-1}e^{i\delta})\cup\arc(p_{j}e^{-i\delta}, p_{j})\subset\M A$$
for an equivalence class $\M A$ of points in $S^1\setminus \M P$ (compare to Figure \ref{Fpinchcomp}, without the arcs $\M P_1$ and $\M P_2$, but it could also be that $\M A$ covers the whole $\arc(p_{j-1},p_j)$). Let $f_k$ and $\tilde\Phi_k=\Phi_k\circ f_k$ be as in Proposition \ref{eqclbis}, corresponding to this equivalence class $\M A$. Then $\tilde \Phi_k\to \tilde \Phi_\infty$ in $W^{2,p}_{\loc} (\bar D^2\setminus\{\tilde a_1,\dots,\tilde a_M\})$, and
$$\tilde \kappa_k e^{\tilde \lambda_k}\weak \sum_{j=1}^M \tilde \alpha_j \delta_{\tilde a_j},$$
where $\tilde \kappa_k=\kappa\circ f_k^{-1}$ and $\tilde \lambda_k= \log |\tilde\Phi_k'|_{S^1}|$.
Up to relabelling we can assume that $\M A_1=\psi_\infty (\arc(\tilde a_1,\tilde a_2))$ is the connected component component of $\M A$ starting at $p_{j-1}$ and $\M A_M=\psi_\infty (\arc(\tilde a_{M-1}, \tilde a_M))$ is the one ending at $p_{j}$ (see Figure \ref{Fmobius}). Then, by Lemma \ref{lemmapinch} applied to $\tilde \alpha_1$, we have (by the definition of $\varphi_\infty$)
$$\tilde\alpha_1=2\pi - (\varphi_\infty (p_{j})-\varphi_\infty(p_{j-1})),$$
which is to say that $\tilde \alpha_1$ is the amount of curvature, including the singularity $\beta_\infty\delta_{-i}$ at $-i$, accumulated along the curve from $\gamma_\infty(p_{j})$ to $\gamma_\infty(p_{j-1})$, which is $2\pi$ minus the curvature on accumulated along the curve from $\gamma_\infty(p_{j-1})$ to $\gamma_\infty(p_{j})$.
According to Lemma \ref{lemmaleqpi}, we have $\tilde \alpha_1\le \pi$,
hence
$$\alpha_j= \varphi_\infty(p_{j})- \varphi_\infty(p_{j-1})=2\pi -
\tilde \alpha_1\ge\pi. $$

\begin{figure}
\begin{center}
\includegraphics[width=16cm]{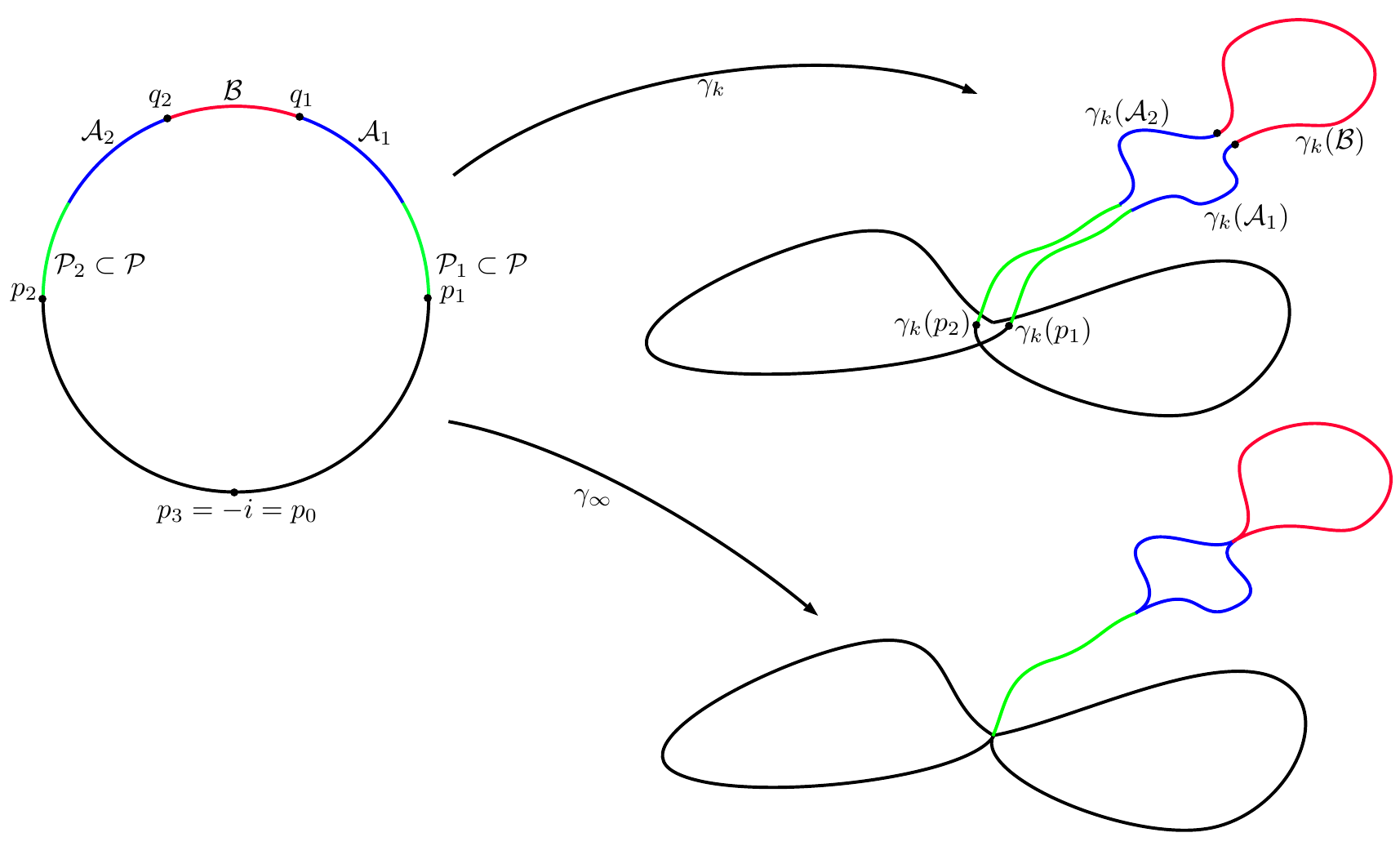}
\end{center}
\caption{\small The pinching set $\mathcal{P}$ can also contain arcs. Its complement contains the equivalent class $\mathcal{B}$ nested between $\mathcal{A}_1$ and $\mathcal{A}_2$.}\label{Fpinchcomp}
\end{figure}

This completes the proof, under the assumption that $p_{j-1}e^{it}$ and $p_{j}e^{-it}$ are not pinched for $t>0$ small. If they are pinched for \emph{every} $t\in (0,\delta)$ for a small $\delta>0$, one has $\gamma_\infty(p_{j-1}e^{it})=\gamma_{\infty}(p_{j}e^{-it})$ for $t\in (0,\delta)$ for a maximal $\delta>0$ (see e.g. Figure \ref{Fpinchcomp}). In this case
$$\int_{\arc(p_{j-1},p_{j-1} e^{i\delta})} \kappa_{\gamma_\infty}d\theta =- \int_{\arc(p_{j}e^{-i\delta},p_{j} )} \kappa_{\gamma_\infty}d\theta,$$
and these contributions to the value of $\alpha_j$ can be ignored.

Then we can reduce to the case in which a right neighbourhood of $p_{j-1}$ and a left neighbourhood of $p_{j}$ contain a sequence of connected components of $S^1\setminus \M P$. Since the equivalence classes of $S^1\setminus \M P$ are nested, we can find arcs $\M A_+$ arbitrarily close to $p_{j-1}$ and $\M A_-$ arbitrarily close to $p_{j}$, say $\dist (\M A_+, p_{j-1})\le \ve$, $\dist (\M A_-, p_{j})\le \ve$,  both belonging to an equivalence class $\mathcal A\subset S^1\setminus \M P$. Applying the previous reasoning (Proposition \ref{eqclbis} in particular) to $\M A$ one finds again $\tilde \alpha_1 \le \pi$, but because of the curvature lost in the arcs joining $p_{j-1}$ to $\M A_+$ and $p_{j}$ to $\M A_-$, which is at most $2\bar \kappa \ve$, one has
$$\alpha_j\ge 2\pi - \tilde \alpha_1 -2\bar\kappa \ve \ge \pi-2\bar\kappa\ve,$$
and since $\ve>0$ is arbitrarily small we conclude again that $\alpha_j\ge \pi$.

Finally, assume \eqref{Kpos}. Then, up to possibly taking $\ve>0$ smaller, so that
$$B_\ve(a_j)\cap \{a_1,\dots, a_N\}=\{a_j\},$$
we have
$$\Phi_k|_{B_\ve(a_j)\setminus B_{\ve/2}(a_j)}\to Q\in \C,$$
so that the curves $\Phi_k|_{S^1\cap B_\ve(a_j)}$ have the following properties:
\begin{enumerate}
\item They have non-negative curvatures uniformly bounded by $\bar\kappa$ and lengths uniformly bounded by $\bar L$;
\item They are almost closed, in the sense that, writing $S^1\cap B_\ve(a_j)=\arc (a_j^-,a_j^+)$, we have $\Phi_k(a_j^+)-\Phi_k(a_j^-)\to 0$.
\end{enumerate}
We now claim that these properties imply that for constant $C(\bar L,\bar \kappa)>0$ depending on $\bar L$ and $\bar\kappa$ one has
\begin{equation}\label{pi+}
\int_{\arc(a_j^-,a_j^+)} \kappa_k e^{\lambda_k}|dz|\ge \pi +C(\bar L,\bar \kappa)+o(1)
\end{equation}
with $o(1)\to 0$ as $k\to \infty$, which implies
$$\alpha_j\ge \pi +C(\bar L,\bar \kappa)>\pi.$$
In order to prove \eqref{pi+} one can argue by contradiction as follows. Assume that for a sequence of $W^{2,\infty}$-curves $s_k:[0,L_k]\to \C$, which we may assume arc-length parametrized with $L_k\le \bar L$ and non-negative curvatures $\kappa_{s_k}$ uniformly bounded by $\bar \kappa$, and with
$$|s_k(0)-s_k(L_k)|\to 0,$$ we have
$$\int_{0}^{L_k}\kappa_{s_k}(t)dt \le \pi +o(1),\quad \text{as }k\to\infty.$$
Up to translation we can assume that $s_k(0)=0$ and by Arzel\`a-Ascoli's theorem we have convergence of a subsequence to a $W^{2,\infty}$-curve $s_\infty:[0,L_\infty]\to \C$ with curvature $0\le \kappa_{s_\infty}\le \bar\kappa$ and
$$\int_{0}^{L_\infty}s_{\gamma_\infty}(s)ds \le \pi,\quad s_\infty(0)=s_\infty(L_\infty),$$
which is impossible (see e.g. Lemma 3.10 in \cite{DMR}).
\end{proof}

\medskip

\noindent\textbf{Proof of Theorem \ref{mainth} (completed).} We start by applying Theorem \ref{convergencephi}. If case 1 occurs, then by Lemma \ref{lemmaleqpi} we have $\alpha_j\le \pi$ for every $j=1,\dots,N$. We claim that in fact $\alpha_j=\pi$ for $j=1,\dots,N$. To do so, we compose $\Phi_k$ with M\"obius diffeomorphisms
$$f_k=\frac{z-t_k a_j}{1-t_k \bar a z},$$
with $t_k\to 1^-$ sufficiently slow, so that for $\tilde \Phi_k=\Phi_k\circ f_k$, $\tilde \kappa_k:=\kappa_k\circ f_k$, $\tilde \lambda_k=\log|\tilde \Phi'_k|_{S^1}|$ we have
\begin{align*}
\tilde\alpha_j&:=\lim_{r\to 0}\lim_{k\to\infty} \int_{S^1\cap B_r(a_j)} \tilde \kappa_k e^{\tilde \lambda_k} |dz|\\
&=\lim_{r\to 0}\lim_{k\to\infty} \int_{S^1\cap B_r(a_j)}  \kappa_k e^{ \lambda_k} |dz|\\
&=\alpha_j.
\end{align*}
Now $\tilde \Phi_k$ is as in case 2 of Theorem \ref{convergencephi} because $\Phi_k\to \Phi_\infty$ with $\Phi_\infty$ an immersion (away from $B$), while $f_k$ concentrates. It follows from Lemma \ref{lemmagepi} that $\tilde\alpha_j\ge \pi$, hence $\alpha_j=\pi.$ Moreover this cannot  happen if $K_k\ge 0$, since in this case Lemma \ref{lemmagepi} would imply $\alpha_j> \pi$. In particular, if $K_k\ge 0$ for every $k$ and case 1 of Theorem \ref{convergencephi} occurs, then $N=0$, i.e. there is no blow-up.

If case $2$ occurs, we apply directly Lemma \ref{lemmagepi} to obtain $\alpha_j\ge \pi$ for $1\le j\le N$, and in fact $\alpha_j>\pi$ if $K_k\ge 0$ in a neighbourhood of $a_j$.

The convergence of $v_k\to v_\infty$ and the formula for $v_\infty$ follow exactly as in the proof of Theorem \ref{convergencephi} of \cite{DMR}.
\hfill$\square$

\section{Immersions of the disk with boundary singularities}

\begin{Definition} A curve $\gamma\in C^1_{\loc}(S^1\setminus \{b_1,\dots,b_N\},\C)$ is called piecewise of class $C^1$ if $\gamma\in C^0(S^1,\C)$ and there exist
\begin{equation}\label{gammadot}
\dot \gamma(b_j^\pm):=\lim_{t\to 0^\pm} \frac{\de \gamma(b_je^{it})}{\de t}, \quad \text{for }1\le j\le N.
\end{equation}
\end{Definition}


\begin{Definition}[Rotation index]\label{defrot}
Given a closed piecewise $C^1$-curve 
$$\gamma \in C^1(S^1\setminus \{b_1,\dots, b_N\}, \C)$$
arc-length parametrized, with finitely many self-intersections, with $b_1,\dots,b_N$ ordered in the counter-clockwise direction, (and w.l.o.g. we can consider $N\ge 1$) define $\varphi_j$ as follows: 
\begin{enumerate}
\item Setting $b_{N+1}:=b_1$, we have $\varphi_j\in C^0(\arc(b_j,b_{j+1}))$ and
$$\dot \gamma= e^{i\varphi_j},\quad \text{in }\arc(b_j,b_{j+1}), \text{ for } 1\le j\le N.$$
According to \eqref{gammadot} the quantities
$$\varphi_j(b_j^+):=\lim_{t\to 0^+} \varphi_j(b_j e^{it}),\quad  \varphi_j(b_{j+1}^-):=\lim_{t\to 0^-} \varphi_j(b_{j+1} e^{it})$$
are also well defined.
\item For $1\le j\le N$ set $\ve_{j}:=\varphi_j(b_j^+)-\varphi_{j-1}(b_j^-)\in [-\pi,\pi]$ ($\varphi_0:=\varphi_N$); this condition uniquely determines $\ve_j$ (called exterior angle) and $\varphi_{j+1}$ given $\varphi_j$, unless $\dot \gamma(b_{j+1}^+)=- \dot \gamma(b_{j+1}^-)$, and if $\dot \gamma(b_{j+1}^+)=- \dot \gamma(b_{j+1}^-)$ we prescribe $\varphi_{j+1}(b_{j+1}^+)-\varphi_j(b_{j+1}^-)=\pi$ if the curve turns left at $\gamma(b_{j+1})$ and $\varphi_{j+1}(b_{j+1}^+)-\varphi_j(b_{j+1}^-)=-\pi$ if the curve turns right (see Figure \ref{Findex2}). Turning left or right is well defined since $\gamma$ has finitely many self-intersections, so it is injective in a sufficiently small neighbourhood of $b_{j+1}$.
\end{enumerate}
Then we define the rotation index of $\gamma$ as
\begin{align*}
r(\gamma)&:=\frac{\varphi_N(b_{N+1}^-)-\varphi_1(b_1^+)+\ve_{1}}{2\pi}\\
&=\frac{1}{2\pi}\sum_{j=1}^N (\varphi_j(b_{j+1}^-)-\varphi_j(b_j^+)+\ve_j)\in \mathbb{Z}.
\end{align*}

\begin{figure}
\begin{center}
\includegraphics[width=14cm]{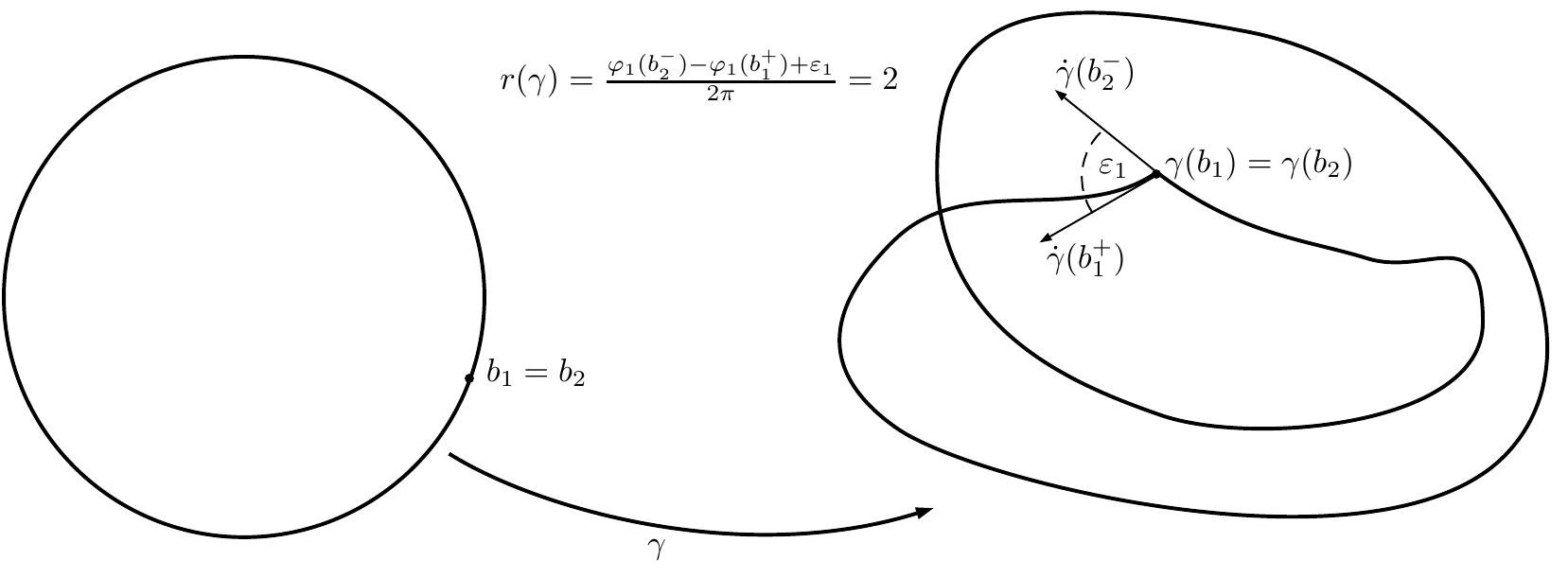}
 
\end{center}
\caption{\small Definition \ref{defrot}, case $N=1$. The curve has rotation index $2$.}\label{Findex2}
\end{figure}

A similar definition applies if $\gamma$ is regular ($|\dot \gamma| \ge \ve>0$ in $S^1\setminus \{b_1,\dots,b_N\}$) but not arc-length parametrized, by replacing $\dot \gamma$ with $\dot\gamma/|\dot\gamma|$, or if we replace $S^1$ by the image $\Gamma=f(S^1)$ of a piecewise $C^1$ Jordan curve $f\in C^1_{\loc}(S^1\setminus\{b_1,\dots, b_n\},\C)$, so that the rotation index of $\gamma: \Gamma\to \C$ is well defined as $r(\gamma\circ f)$.
\end{Definition}

The following theorem generalizes a result of S. J. Blank \cite{bla}. In the result of Blank an immersion $\Phi\in C^1(\bar D^2, \C)$ is considered and properties of the curve $\gamma:=\Phi|_{S^1}$ are studied, such as algebraic conditions on its self-intersections. If we allow that immersion $\Phi$ to be singular at finitely many points on the boundary, things become more comples. For instance it is not true anymore that $r(\Phi|_{S^1})=1$, but we can at least prove that $r(\Phi|_{S^1})\ge 1$, which is the statement of Theorem \ref{trmBlank} below. This will use many ideas of the original work of Blank, which we will recall during the proof.

\begin{Theorem}\label{trmBlank} Let $\Phi:\bar D^2\to \R^2$ satisfy the following:
\begin{enumerate}
\item $\Phi\in C^1_{\loc}(\bar D^2\setminus \{ b_1,\dots, b_N\})\cap C^0(\bar D^2)$ for a finite set of points $\{b_1,\dots, b_N\}\subset S^1$.
\item $J\Phi>0$ everywhere in $\bar D^2\setminus \{ b_1,\dots, b_N\}$.
\item $\gamma:=\Phi|_{S^1}\in C^1_{\loc}(S^1\setminus \{b_1,\dots, b_N\})$ is piecewise $C^1$ and regular.
\end{enumerate}
Then the rotation index of $\gamma$ is positive, i.e. $r(\gamma)\ge 1$. Moreover any word of Blank associated to $\gamma$ can be completely contracted (see definitions in the proof).
\end{Theorem}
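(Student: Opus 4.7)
My plan is to reduce the theorem to Blank's classical result via a smooth exhaustion of $\bar D^2$ by subdomains avoiding the singular points, and then to pass to the limit while carefully accounting for the local behaviour of $\Phi$ at each $b_j$.

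First, for $\delta>0$ small, I would construct $\Omega_\delta\subset\bar D^2$ as a smoothly bounded topological disk obtained by removing $B_\delta(b_j)$ from $\bar D^2$ for each $j=1,\dots,N$ and smoothing the four corners where $\partial B_\delta(b_j)$ meets $S^1$. Since $\Omega_\delta\subset\bar D^2\setminus\{b_1,\dots,b_N\}$, the restriction $\Phi|_{\Omega_\delta}$ is a $C^1$ immersion of $\Omega_\delta$ with regular boundary $\gamma_\delta:=\Phi|_{\partial\Omega_\delta}$. Blank's original theorem applies: by Whitney's classical result for immersions of a disk, $r(\gamma_\delta)=1$, and the word of Blank associated to $\gamma_\delta$ admits a complete contraction.

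Next I would decompose $\partial\Omega_\delta$ into arcs of $S^1$ (whose $\Phi$-image is a portion of the original $\gamma$) and $N$ semicircular filling arcs $A_j(\delta)$ lying in the interior of $\bar D^2$; set $\Gamma_j(\delta):=\Phi(A_j(\delta))$. By continuity of $\Phi$ at each $b_j$, $\mathrm{diam}(\Gamma_j(\delta))\to 0$. Letting $R_j(\delta)$ denote the sum of the total smooth rotation of $\Gamma_j(\delta)$ and the two exterior-angle contributions at its junction corners with the adjacent $\gamma$-pieces, the identity $r(\gamma_\delta)=1$ expands as
\begin{equation*}
2\pi=\Bigl(2\pi\, r(\gamma)-\sum_{j=1}^N\ve_j\Bigr)+\sum_{j=1}^NR_j(\delta)+o(1),\qquad \delta\to 0.
\end{equation*}
The technical heart is the formula
\begin{equation*}
R_j(\delta)=\ve_j-2\pi k_j+o(1),
\end{equation*}
where $k_j\in\mathbb{Z}_{\ge 0}$ is a non-negative integer encoding the local rotation index of $\Phi$ at $b_j$. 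To establish this, I would apply Whitney's turning-number identity to the closed loop obtained by concatenating the $\gamma$-arc through $b_j$ (corner $\ve_j$ included) with $\Gamma_j(\delta)$ reversed; this loop bounds the immersed region $\Phi(B_\delta(b_j)\cap\bar D^2)$, whose boundary has total rotation $2\pi(k_j+1)$, where $k_j+1\in\mathbb{Z}_{\ge 1}$ is the multiplicity with which $\Phi$ covers a generic interior point of this region. The positivity of $J\Phi$ on $\bar D^2\setminus\{b_j\}$ forces this preimage count to be a non-negative integer, yielding $k_j\ge 0$. Substituting the asymptotic and cancelling terms gives
\begin{equation*}
r(\gamma)=1+\sum_{j=1}^N k_j\ge 1.
\end{equation*}

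For the complete contraction of the word of Blank of $\gamma$: the word of $\gamma_\delta$ consists of letters inherited from self-intersections of $\gamma$ that persist after slightly perturbing near each $b_j$, plus finitely many extra letters coming from self-intersections involving the small filling arcs $\Gamma_j(\delta)$. Since each $\Gamma_j(\delta)$ collapses to the single point $\Phi(b_j)$, such extra self-intersections come in pairs bounding small topological disks near $\Phi(b_j)$, producing Blank substrings of the form $\ell_i \ell_i^{-1}$ that admit trivial contractions; performing these first on the complete contraction of the word of $\gamma_\delta$ guaranteed by Blank's theorem leaves a valid complete contraction of the word of $\gamma$. The main obstacle is the precise justification of the asymptotic $R_j(\delta)=\ve_j-2\pi k_j+o(1)$ with $k_j\in\mathbb{Z}_{\ge 0}$: this requires extending Whitney's turning formula to the boundary-singular immersion $\Phi|_{B_\delta(b_j)\cap\bar D^2}$ (whose boundary develops cusp-type singularities in the limit) and controlling the possibly non-conformal angle distortion of $D\Phi$ at the two junction corners where $\Gamma_j(\delta)$ meets the adjacent $\gamma$-pieces, while verifying that the local covering multiplicity $k_j+1$ is forced to be a positive integer by the sign of the Jacobian.
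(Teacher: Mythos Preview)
Your exhaustion strategy is natural, but its core step is essentially circular. The asymptotic $R_j(\delta)=\ve_j-2\pi k_j+o(1)$ with $k_j\in\Z_{\ge 0}$ is claimed to follow from Whitney's turning identity applied to the loop $\Phi|_{\partial(B_\delta(b_j)\cap\bar D^2)}$, with $k_j+1$ identified as the covering multiplicity of $\Phi$ over a generic interior point of the image. But that multiplicity is the \emph{winding number} (mapping degree) of the loop around such a point, not its \emph{rotation index}; positivity of $J\Phi$ forces all winding numbers to be non-negative, which does not by itself bound the turning from below. (A counterclockwise circle carrying a small inward clockwise sub-loop has all winding numbers in $\{0,1\}$ yet rotation index $0$.) More fundamentally, the loop you build still carries the singular corner at $\Phi(b_j)$, so asserting that its rotation index is at least $1$ is precisely the $N=1$ instance of the very theorem under proof. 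You correctly flag this as ``the main obstacle'', but the proposal does not resolve it, and without it the identity $r(\gamma)=1+\sum_j k_j$ has no content. The Blank-word portion is similarly sketchy: the extra crossings produced by the filling arcs $\Gamma_j(\delta)$ need not arrange themselves as adjacent cancelling pairs in the word, and Blank contraction is not group-theoretic cancellation of $\ell\,\ell^{-1}$.

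The paper sidesteps any local analysis at the $b_j$. It observes that since $\Phi\in C^0(\bar D^2)$, the Blank segments $\Sigma_a,\Sigma_b,\ldots$ can be drawn so as to miss each $\Phi(B_\ve(b_j))$, so that the preimages $\Phi^{-1}(\Sigma_a)$ are families of $C^1$ arcs in $\bar D^2$ lying entirely in the region where $\Phi$ is a local diffeomorphism. These arcs slice $\bar D^2$ into finitely many topological disks $\Omega_1,\ldots,\Omega_n$; an orientation check shows that each $\Phi|_{\partial\Omega_\ell}$ carries a Blank word with only positive letters, hence (by a short Seifert-decomposition lemma) has rotation index $\ge 1$, and gluing along the slicing arcs gives $r(\gamma)=\sum_\ell r(\Phi|_{\partial\Omega_\ell})-(n-1)\ge 1$. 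The combinatorics thus takes place entirely away from the singularities, which is what makes the argument go through without ever quantifying the local turning at a $b_j$.
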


\begin{proof}
Up to slightly deforming $\Phi$ we can assume that $\gamma$ has finitely many self-intersections, each of them is transversal and away from the points $\gamma(b_j)$, $1\le j\le N$. This of course does not change $r(\gamma)$. It is now possibly to locally deform $\Phi$ near each point $b_j$ in a way that $\gamma\in C^1(S^1)$, i.e. it is possibly to flatten the angles between $\dot \gamma(b_j^-)$ and $\dot\gamma(b_{j}^+)$, for instance by composing with suitable functions $\psi_j\in C^0(\C)\cap C^1(\C\setminus \{b_j\})$ with $\psi_j(z)=z$ away from $b_j$, in a way that the rotation number $r(\gamma)$ remains unchanged, therefore we can work with this stronger assumptions. 

Following the work of Blank \cite{bla} (see also \cite{poe} - \cite[Section 4]{MST}), to every bounded connected component of $\C\setminus \gamma(S^1)$ we associate a letter ($a,b,c,\dots$), and construct a segment ($\Sigma_a, \Sigma_b,\dots$) from the interior of the component and extending until it reaches the unbounded component of $\C\setminus \gamma(S^1)$. We can do that in such a way that the segment crosses $\gamma(S^1)$ in generic points (not at self-intersection points) and transversally. On each segment we move from the extreme in the interior of the corresponding component towards the other extreme and to each crossing with $\gamma(S^1)$ we associate a natural number (starting from $0$ for first crossing, then $1$, etc.) and an orientation ($+$ if $\gamma$ crosses the segment from the right, $-$ otherwise). For instance $\Sigma_a$ determines finitely many points on $\gamma(S^1)$ named $a_0^+, a_1^-,a_2^-,\dots$ (see Fig. \ref{FBlank}, where $\Sigma_a$ determines $a_0^-$ and $a_1^+$).

Since $\Phi\in C^0(\bar D^2)$ we can also assume that each segment stays away from $\Phi(B_\ve(b_j))$ for a small $\ve>0$ and $1\le j\le N$, so that $\Phi$ is a local $C^1$-diffeomorphism near the segments.

We construct now the ``word of Blank'' associated to $\gamma$ and this (non-unique) choice of segments by choosing a starting point for $\gamma$, following the curve and collecting all the letters corresponding to the crossing of the segments with $\gamma(S^1)$. For instance a word of Blank could be $a_0^- b_1^+ c_0^+ a_1^+b_0^+$ (as in Fig. \ref{FBlank}).

As in the work of Blank, we now contract such word as follows. Whenever a letter with minus sign appears (for instance $a_0^-$) in the above example, if we can find the same letter with plus sign so that between them (seeing the word in a periodic sense, i.e. up to a cyclic permutation: $a_0^- b_1^+ c_0^+ a_1^+b_0^+=b_1^+ c_0^+ a_1^+b_0^+a_0^-$) there is no other letter with minus sign, we cancel all such letters: for instance $b_1^+ c_0^+ a_1^+b_0^+a_0^-$ can be contracted to $b_1^+ c_0^+$ by removing $a_1^+$, $a_0^-$ and the letters in between. A word is completely contracted when no minus sign is left. In general a word of Blank associated to a curve cannot be completely contracted (to the curve $\infty$ one can associate the words $a_0^+b_0^-$ or $a_0^-b_0^+$ (see Fig. \ref{FBlank}), neither of whom can be contracted), but using that $\gamma=\Phi|_{S^1}$ for an immersion $\Phi$ as in the statement of the theorem we will contract the word of Blank  in a way that each contraction corresponds to a slice of the domain $\bar D^2$.

Indeed consider $\Phi^{-1}(\Sigma_a)$ for any segment $\Sigma_a$ as above. Because away from $\{b_1,\dots, b_N\}$ the map $\Phi$ is a locally invertible $C^1$-map, we see that $\Phi^{-1}(\Sigma_a)\cup\Phi^{-1}(\Sigma_b)\cup \dots $ is a union of $C^1$ curves $\sigma_{a,1},\sigma_{a,2},\dots, \sigma_{b,1},\dots$ with extremes, not intersecting, and with an orientation induced by the orientation of the segments $\Sigma_a,\Sigma_b,\dots$. Because of the orientation, all curves end on $S^1$ on points which are preimages of letters with positive sign, some curves start in the interior of $\bar D^2$, an those who start on $S^1$, start from preimages of points with negative sign. This implies that if negative signs exist (so the word of Blank is not completely contracted), there is at least a curve, say $\sigma_a$ starting and ending on $S^1$ splitting $\bar D^2$ into two compact components $U$ and $V$ (intersecting on $\sigma_a$), one of which, say $U$ does not contain preimages of points with negative sign (the proof of this fact is easy), and this leads to the first contraction of the word of Blank, by removing all letters in $\gamma(S_1\cap U)$.

Call $\Omega_1:=U$ and consider $\Phi|_{\Omega_1}$. A word of Blank associated to it, is easily seen to be given by the letters of the previous word of Blank with pre-image in $\gamma(S^1\setminus V)$, which by assumption have all positive sign. By deforming the domain $\Omega_1$ into a disk $\bar D^2$ (flattening the two angles at which $\sigma_a$ intersects $S^1$) we do not change the rotation index of $\Phi|_{\de \Omega_1}$, and by Lemma \ref{lemmapositive} below, we have that $r(\Phi|_{\de \Omega_1})\ge 1$.

We can now repeat the same procedure on $V$ if at least a letter with preimage in $S^1\setminus \Omega_1$ has negative sign. In finitely many step we end up with a decomposition $\bar D^2=\Omega_1\cup\dots\cup \Omega_n$ so that the 
$$r(\Phi|_{\Omega_j})\ge 1\quad  \text{for }1\le j\le n,$$
and we can also order them so that $\Omega_j\cap \Omega_{j+1}=\sigma_{\ell,j}\subset\Phi^{-1}(\Sigma_\ell)$ for some letter $\ell$. A brief look at the definition shows that
$$r(\Phi|_{\de(\Omega_j\cup \Omega_{j+1})}) = r(\Phi|_{\de \Omega_j})+r(\Phi|_{\de \Omega_{j+1}})-1$$
and similarly when gluing several components, so that finally
$$r(\Phi|_{S^1})= \sum_{j=1}^n r(\Phi|_{\de\Omega_j}) -(n-1)\ge 1.$$
This completes the proof.
\end{proof}

\begin{figure}
\begin{center}
\includegraphics[width=14cm]{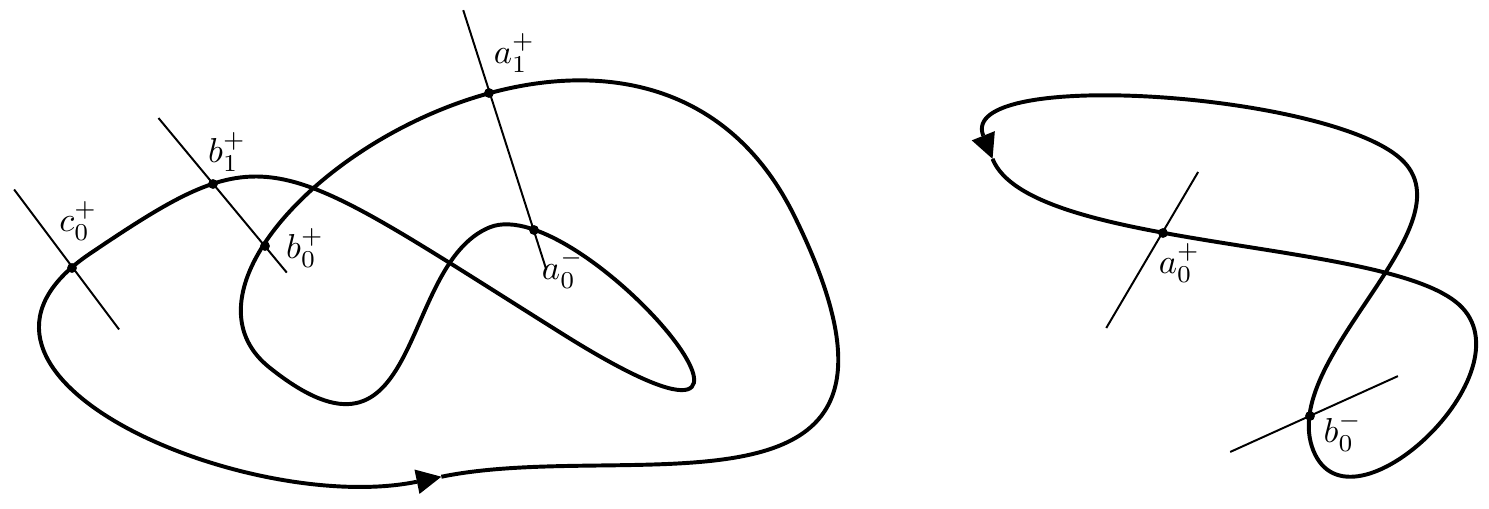}
 
\end{center}
\caption{\small Two words of Blank: $a_0^- b_1^+ c_0^+ a_1^+b_0^+$ and $a_0^+ b_0^-$. The first one contracts, so that the first curve can be extended to an immersion of the disk, the second one does not and the curve cannot be extended to an immersion of the disk.}\label{FBlank}
\end{figure}

\begin{figure}
\begin{center}
\includegraphics[width=14cm]{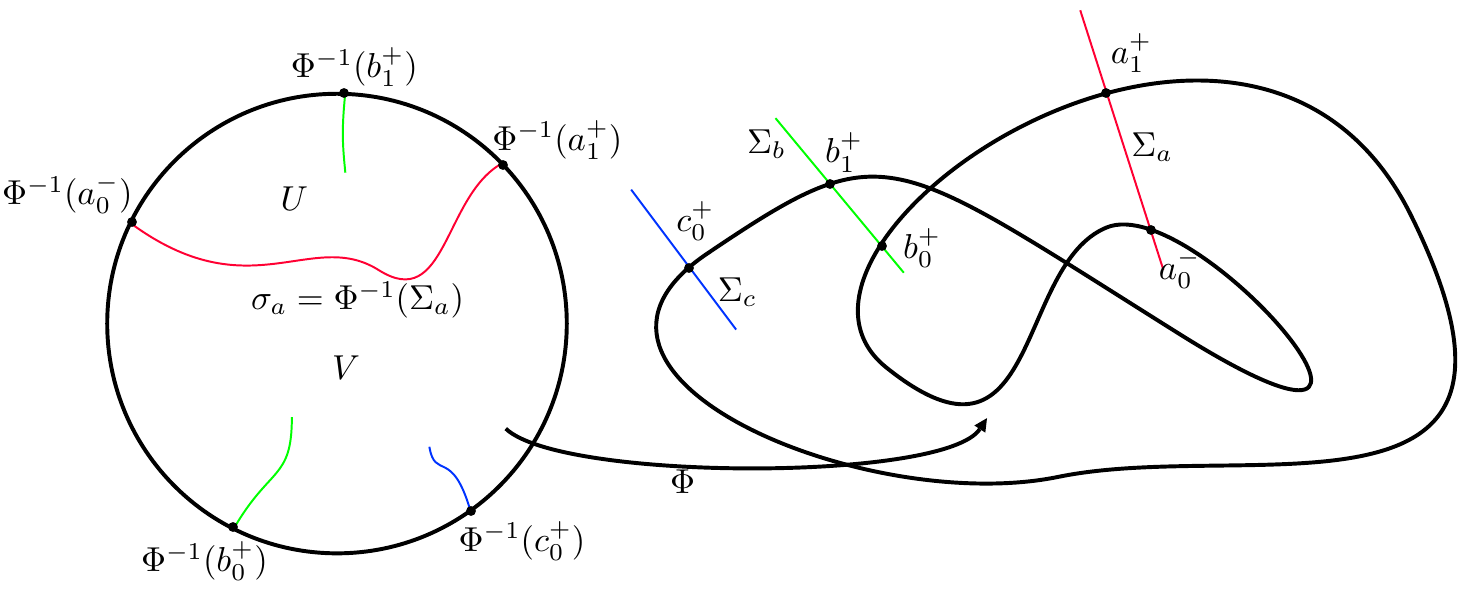}
 
\end{center}
\caption{\small The proof of Theorem \ref{trmBlank}.}\label{FBlank2}
\end{figure}

\begin{Corollary}\label{corangle} Let $\Phi$ and $\gamma$ be as in Theorem \ref{trmBlank}, with $N=1$ and let $\varphi=\varphi_1:S^1\setminus\{b_1\}\to \R$ be as in Definition \ref{defrot}. Then
$$\varphi(b_2^-)-\varphi(b_1^+)\ge \pi.$$
In particular if $\gamma|_{S^1\setminus \{b_1\}}\in W^{2,1}(S^1\setminus\{b_1\})$, then
$$\int_{S^1\setminus \{b_1\}} \kappa_\gamma d\theta\ge \pi.$$
As a consequence, given $\lambda\in L^1(S^1)$ solution of 
$$\lapfr \lambda = \kappa e^{\lambda}  -1 +\beta \delta_{a},$$
with $\kappa\in L^\infty(S^1)$,  $e^\lambda\in L^1(S^1)$, $a\in S^1$ and $\beta\in\R$, one always has $\beta\le \pi$.
\end{Corollary}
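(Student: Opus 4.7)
The plan is to deduce the three parts of the corollary in succession from Theorem \ref{trmBlank}. \textbf{First}, I would unpack Definition \ref{defrot} in the case $N=1$, using the conventions $b_2 = b_{N+1} = b_1$ and $\varphi_0 = \varphi_N = \varphi_1$. The rotation index then reduces to
$$r(\gamma) = \frac{\varphi(b_2^-) - \varphi(b_1^+) + \ve_1}{2\pi},$$
with $\ve_1 \in [-\pi,\pi]$. Theorem \ref{trmBlank} yields $r(\gamma) \ge 1$, and rearranging gives $\varphi(b_2^-) - \varphi(b_1^+) \ge 2\pi - \ve_1 \ge \pi$, which is the first inequality.

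\textbf{Second}, assuming $\gamma \in W^{2,1}(S^1\setminus\{b_1\})$, the angle function $\varphi$ is absolutely continuous on $S^1\setminus\{b_1\}$ with $\dot \varphi = \kappa_\gamma |\dot\gamma|$. The fundamental theorem of calculus then turns the angular bound from the first step into the integral bound $\int_{S^1\setminus\{b_1\}} \kappa_\gamma |\dot\gamma|\, d\theta \ge \pi$, which is $\int \kappa_\gamma\, d\theta \ge \pi$ in the arc-length parametrization of the corollary's statement.

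\textbf{Third}, for the bound on $\beta$, a rotation of $S^1$ reduces to $a=-i$. I would then set $u(x) := \lambda(\Pi^{-1}(x)) + \log\bigl(2/(1+x^2)\bigr)$ and invoke Proposition \ref{proppull2}: $u \in L_{1/2}(\R)$ solves $\lapfr u = Ke^u$ on $\R$ with $K := \kappa \circ \Pi^{-1} \in L^\infty$ and $e^u \in L^1(\R)$, and the coefficient of the Dirac mass is $\beta = 2\pi - \int_\R Ke^u\, dx$. Theorem \ref{geomequiv} then supplies a holomorphic immersion $\Phi : \bar D^2 \setminus\{-i\} \to \C$ which fits the hypotheses of Theorem \ref{trmBlank} with $N=1$, $b_1 = -i$, and whose boundary curve $\gamma = \Phi|_{S^1}$ has curvature $\kappa$. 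From \eqref{cond1bis} together with the substitution $x = \Pi(e^{i\theta})$, the arc-length total curvature along $\gamma$ is recognized as $\int_\R Ke^u\, dx$, which the second step bounds from below by $\pi$. Substituting back gives $\beta \le 2\pi - \pi = \pi$.

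\textbf{The main obstacle} is verifying the regularity hypotheses of Theorem \ref{trmBlank} for $\gamma$ at the singular point $-i$: when $\beta \ne 0$ the magnitude $|\dot\gamma| = e^\lambda$ may blow up or vanish, so $\dot\gamma$ itself need not have one-sided limits there. This is resolved by working with the tangent direction $\dot\gamma/|\dot\gamma|$ in place of $\dot\gamma$, as permitted by the final paragraph of Definition \ref{defrot}, or equivalently by passing to the regularization $\lambda_1 := \lambda + (\beta/2\pi)\log(2(1+\sin\theta))$ introduced in the proof of Theorem \ref{geomequiv}, which has no Dirac singularity and preserves the tangent direction of the underlying curve.
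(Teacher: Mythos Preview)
Your proposal is correct and follows essentially the same route as the paper: the paper's proof is simply the one-line computation $1\le r(\gamma)=(\varphi(b_2^-)-\varphi(b_1^+)+\ve_1)/2\pi$ with $\ve_1\in[-\pi,\pi]$ for the first part, and ``apply Theorem~\ref{geomequiv} to construct $\Phi$ and $\gamma$, then use the first part'' for the last. Your version spells out the passage through Proposition~\ref{proppull2} and the identification $\int_{S^1\setminus\{-i\}}\kappa e^\lambda\,d\theta=\int_\R Ke^u\,dx$, which the paper leaves implicit, and you additionally flag and resolve the regularity issue at the singular point (one-sided limits of the tangent \emph{direction} rather than of $\dot\gamma$ itself), which the paper does not comment on.
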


\begin{proof} Indeed by Theorem \ref{trmBlank} we have
$$1\le r(\gamma)=\frac{\varphi(b_2^-)-\varphi(b_1^+)+\ve_1}{2\pi},$$
with $\ve_1\in [-\pi,\pi].$ The last statement follows by first applying Theorem \ref{geomequiv} to construct $\Phi$ and $\gamma$, to which we can then apply the first part of the Corollary.
\end{proof}

\begin{figure}
\begin{center}
\includegraphics[width=14cm]{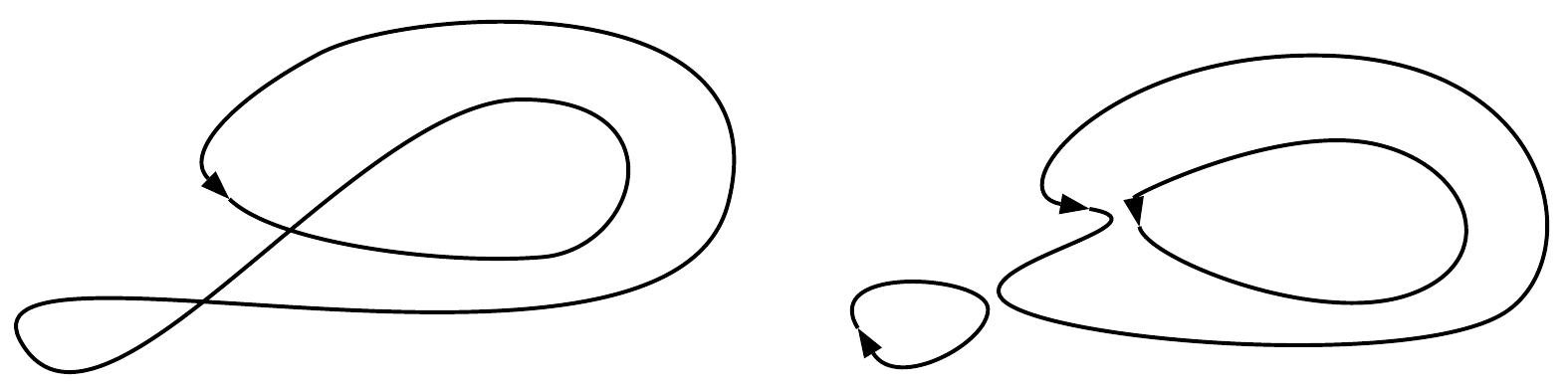}
 
\end{center}
\caption{\small The Seifert decomposition of an oriented curve with rotation index $1$ into $3$ simple oriented curves.}\label{Fseifert}
\end{figure}

\begin{Lemma}\label{lemmapositive} If a closed piecewise $C^1$-curve $\gamma\in C^1(S^1\setminus \{b_1,\dots,b_n\})$ admits a word of Blank without negative signs, then $r(\gamma)\ge 1$.
\end{Lemma}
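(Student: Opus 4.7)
The plan is to translate the combinatorial hypothesis on the word of Blank into a positivity statement about winding numbers of $\gamma$, construct a branched immersion $\Phi\colon\bar D^{2}\to\mathbb{C}$ with $\Phi|_{S^{1}}=\gamma$ by a Blank-style gluing of sheets, and conclude via Gauss--Bonnet. First, I would verify that every bounded component $F_{a}$ of $\mathbb{C}\setminus\gamma(S^{1})$ satisfies $w(\gamma,F_{a})\ge 1$. Indeed, along the segment $\Sigma_{a}$ oriented from $F_{a}$ outwards to the unbounded component, the winding number of $\gamma$ around a point moving along $\Sigma_{a}$ decreases by $1$ at each $+$-crossing and increases by $1$ at each $-$-crossing (this is precisely how the sign convention in the construction of the word is fixed). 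Since the word contains only $+$'s and the winding in the unbounded face is zero, $w(\gamma,F_{a})$ equals the strictly positive number of crossings of $\Sigma_{a}$ with $\gamma$.

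Next, following Blank \cite{bla}, I would stack $w(\gamma,F_{a})$ copies of $\bar F_{a}$ for each bounded face and glue them along the arcs of $\gamma$ according to the data encoded in the word. Because every sign is $+$, the word is trivially already ``fully contracted'' and all gluings are compatible, producing a compact, orientable $2$-complex $X$ with a single boundary component canonically identified with $(S^{1},\gamma)$ and carrying a tautological projection $\Phi\colon X\to\mathbb{C}$ that is an orientation-preserving local diffeomorphism away from finitely many interior branch points. A direct Euler-characteristic count then gives $\chi(X)=1$, so $X\cong\bar D^{2}$. Applying Gauss--Bonnet to $\bar D^{2}$ equipped with the flat pulled-back metric $\Phi^{*}g_{\mathrm{eucl}}$, whose interior conical singularities of multiplicity $k_{\ell}$ contribute angle defects $-2\pi(k_{\ell}-1)\le 0$, then yields
$$2\pi\, r(\gamma) \;=\; \int_{S^{1}}\kappa_{\gamma}\,ds + \sum_{j=1}^{N}\varepsilon_{j} \;=\; 2\pi + 2\pi\sum_{\ell}(k_{\ell}-1) \;\ge\; 2\pi,$$
so $r(\gamma)\ge 1$.

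The main obstacle will be verifying that Blank's gluing really does produce a topological disk rather than a higher-genus surface or a surface with additional boundary circles. This is the combinatorial heart of Blank's theorem; in our setting it simplifies considerably because the word contraction is trivial, but a careful finite bookkeeping along the segments $\Sigma_{a}$ is still needed to rule out disconnected or multiply-bounded outcomes and to pin down the precise branching behaviour of $\Phi$ at the finitely many interior branch points.
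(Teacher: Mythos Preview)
Your first step---reading off from the all-positive Blank word that every bounded face has strictly positive winding number---is correct and is in fact the only geometric observation required. But you then take a long detour through Blank's reconstruction of a (branched) immersion, and this detour carries precisely the obstacle you flag at the end: you must show the glued $2$-complex is a disk, and you have not done so. Worse, the description ``stack $w(\gamma,F_a)$ copies of $\bar F_a$ and glue along the arcs of $\gamma$'' is not, as written, the construction that produces a disk with interior branch points. For the double circle (winding number $2$ over a single face), naively gluing two copies of the disk along a common boundary circle gives a sphere, not the branched cover $z\mapsto z^2$ that your Gauss--Bonnet computation needs. Making the gluing precise enough to control $\chi(X)$ and locate the branch points amounts to reproving a branched version of Blank's theorem, which is out of proportion with the statement.

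The paper avoids all of this via a Seifert decomposition. After a small perturbation so that $\gamma$ has finitely many transverse self-intersections away from the corners, one resolves each crossing in the orientation-preserving way to obtain disjoint piecewise $C^1$ Jordan curves $\gamma_1,\dots,\gamma_m$. The all-positive Blank word forces every $\gamma_j$ to be \emph{positively} oriented: were some $\gamma_j$ negative, the Blank segment issued from the face bounded by $\gamma_j$ would pick up a negative letter at its first crossing with $\gamma$. Hence $r(\gamma_j)=1$ for each $j$ by the Umlaufsatz for piecewise $C^1$ Jordan curves, and since rotation index is additive under Seifert resolution,
\[
r(\gamma)=\sum_{j=1}^m r(\gamma_j)=m\ge 1.
\]
This is a few lines and uses only elementary plane topology---no surface reconstruction, no Euler-characteristic bookkeeping, no Gauss--Bonnet with cone points. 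Your positive-winding-number observation is exactly what drives the orientation claim for the $\gamma_j$, so you were one short step from the simple argument.
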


\begin{proof} This (probably well-known) fact is based on what is called Seifert decomposition (Fig. \ref{Fseifert}). Up to a small perturbation of $\gamma$ we can assume that it has finitely many self-intersections, avoiding corners, and transversal. Decompose $\gamma(S^1)$ into finitely many piecewise $C^1$ Jordan curves $\gamma_j:S^1\to \C$, $j=1,\dots,m$ (with no self-intersections but new angles at the self-intersection points of $\gamma(S^1)$, which can be smoothed out, although this is not necessary in our setting) endowed with the same orientation given by $\gamma$. Each $\gamma_j$ is positively oriented, otherwise the segment leaving the component bounded by $\gamma_j(S^1)$ would give rise to a negative letter in the word of Blank. Then $r(\gamma_j)=1$ for every $j$ and one easily sees that
$$r(\gamma)=\sum_{j=1}^m r(\gamma_j)=m\ge 1.$$
\end{proof}

\appendix
 
We shall collect here some useful results and definitions about half-Laplacians and holomorphic maps.

\section{Appendix}\label{sec:lapl}
\subsection{The half-Laplacian on $S^1$ }\label{fraclaps1}

Given $u\in L^1(S^1)$ we define its Fourier coefficients as
$$\hat{u}(n)=\frac{1}{2\pi} \int_{S^1}u(\theta)e^{-in\theta}d\theta,\quad n\in\mathbb{Z}.$$
If $u$ is smooth we can define
\begin{equation}\label{fraclapl5}
\lapfr u(\theta)=\sum_{n\in\mathbb{Z}} |n|\hat{u}(n) e^{in\theta}.
\end{equation}
For $u\in L^1(S^1)$ we can define $\lapfr u\in \mathcal{D}'(S^1)$ as distribution as
\begin{equation}\label{fraclapl6}
\langle \lapfr u,\varphi\rangle := \int_{S^1} u\lapfr\varphi d\theta,\quad \varphi\in C^\infty(S^1).
\end{equation}

If $u\in L^1(S^1)$, set $\tilde u(r,\theta)$ (in polar coordinates) as
\begin{equation}\label{Poisson1}
\tilde u(r,\theta)=\frac{1}{2\pi}\int_0^{2\pi}P(r,\theta-t)u(t)dt,\quad P(r,\theta)=\sum_{n\in\mathbb{Z}}r^{|n|}e^{in\theta}=\frac{1-r^2}{1-2r\cos\theta +r^2}.
\end{equation}
This is the Poisson integral of $u$ and it is harmonic. Then 
\begin{equation}\label{fraclapl7}
\lapfr u=\frac{\de \tilde u}{\de r}\bigg|_{r=1}\quad \text{in }\mathcal{D}'(S^1), \quad \text{i.e.} \quad\left\langle \frac{\de \tilde u}{\de r}\bigg|_{r=1},\varphi\right\rangle := \int_{S^1} u\,\frac{\de \tilde \varphi}{\de r}\bigg|_{r=1} d\theta
\end{equation}
for every $\varphi\in C^{\infty}(S^1)$, where $\tilde\varphi$ is the harmonic extension of $\varphi\,$ in $D^2\,.$\par



For $u\in C^{1,\alpha}(S^1)$ there is also the following pointwise definition of $(-\Delta)^\frac12 u$:

\begin{Proposition} If $u\in C^{1,\alpha}(S^1)$ for some $\alpha\in (0,1]$, then $\lapfr u\in C^{0,\alpha}(S^1)$ and
\begin{equation}\label{fraclapl8}
\lapfr u(e^{i\theta})=\frac{1}{\pi}P.V.\int_0^{2\pi} \frac{u(e^{i\theta})-u(e^{it})}{2-2\cos(\theta-t)}dt.
\end{equation}
\end{Proposition}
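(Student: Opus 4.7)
The approach is via the Poisson integral representation set up in \eqref{Poisson1}--\eqref{fraclapl7}. Recall that $\lapfr u=\partial_r\tilde u|_{r=1}$, where $\tilde u(r,\theta)=\frac{1}{2\pi}\int_0^{2\pi}P(r,\theta-t)u(t)\,dt$ with Poisson kernel $P(r,\phi)=\frac{1-r^2}{1-2r\cos\phi+r^2}$. The idea is to differentiate under the integral, pass to the boundary, and recognize the limit kernel as $-\frac{2}{2-2\cos\phi}$, all while exploiting the evenness of $P(r,\cdot)$ to make the boundary integral well-defined as a principal value.

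First I would use $\frac{1}{2\pi}\int_0^{2\pi}P(r,\phi)\,d\phi=1$ (valid for all $r\in[0,1)$) to obtain $\int_0^{2\pi}\partial_r P(r,\phi)\,d\phi=0$, which lets me rewrite
\[
\partial_r\tilde u(r,\theta)=\frac{1}{2\pi}\int_0^{2\pi}\partial_rP(r,\theta-t)\,[u(t)-u(\theta)]\,dt.
\]
Next, a direct computation of $\partial_rP$ and its limit as $r\to 1^-$ gives the pointwise identity $\partial_rP(r,\phi)\to -\frac{2}{2-2\cos\phi}$ for $\phi\in(0,2\pi)$, together with the uniform pointwise majorant $|\partial_rP(r,\phi)|\le\frac{C}{1-\cos\phi}$.

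The third step is passing to the limit $r\to 1^-$. Since $P(r,\cdot)$ is even, so is $\partial_rP(r,\cdot)$; hence the contribution of $u'(\theta)(t-\theta)$ to the integral vanishes by odd symmetry over any symmetric neighborhood of $\theta$. Writing $u(t)-u(\theta)=u'(\theta)(t-\theta)+R(\theta,t)$ with $|R(\theta,t)|\le C|t-\theta|^{1+\alpha}$ (thanks to $u\in C^{1,\alpha}$), the remaining integrand is bounded by $\frac{C|t-\theta|^{1+\alpha}}{(t-\theta)^2}=\frac{C}{|t-\theta|^{1-\alpha}}$, which is integrable on $S^1$. Dominated convergence then yields
\[
\lapfr u(\theta)=\frac{1}{2\pi}\int_0^{2\pi}\frac{-2}{2-2\cos(\theta-t)}[u(t)-u(\theta)]\,dt=\frac{1}{\pi}P.V.\int_0^{2\pi}\frac{u(e^{i\theta})-u(e^{it})}{2-2\cos(\theta-t)}\,dt,
\]
where the principal value interpretation comes precisely from the symmetric cancellation of the $u'(\theta)(t-\theta)$ term. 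The Hölder regularity $\lapfr u\in C^{0,\alpha}(S^1)$ then follows from standard estimates on the resulting singular integral (alternatively, from $\lapfr u=\partial_\theta\mathcal{H}(u)$ via \eqref{lapHilb} and the fact that $\mathcal{H}$ preserves $C^{1,\alpha}(S^1)$ by Privalov's theorem).

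The main technical obstacle is the justification of dominated convergence in step three: one must simultaneously (i) use the symmetry of the even Poisson kernel to discard the non-integrable odd part of $u(t)-u(\theta)$ \emph{uniformly in} $r$, and (ii) produce an $r$-independent integrable majorant for what remains. Once this symmetrization-plus-Taylor-subtraction trick is set up, the rest of the argument is essentially calculus plus dominated convergence, and the principal value appears naturally as the limit of the manifestly well-defined expression $\partial_r\tilde u(r,\theta)$ as $r\uparrow 1$.
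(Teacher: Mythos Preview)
The paper states this proposition in the appendix as a known fact and gives no proof, so there is nothing to compare your argument against. Your approach is correct and is the standard one: pass through the harmonic extension \eqref{Poisson1}--\eqref{fraclapl7}, subtract the constant to introduce $u(t)-u(\theta)$, and use the $C^{1,\alpha}$ Taylor remainder together with the evenness of $\partial_r P(r,\cdot)$ to justify dominated convergence. The key majorant $|\partial_r P(r,\phi)|\le C/(1-\cos\phi)$ for $r\in[1/2,1)$ is valid; writing $a=(1-r)^2$ and $b=1-\cos\phi$ one has $\partial_r P(r,\phi)=\dfrac{2(a-(1+r^2)b)}{(a+2rb)^2}$, hence $|\partial_r P|\le \dfrac{C(a+b)}{(a+b)^2}\le \dfrac{C}{b}$. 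With this bound in hand, the symmetrization kills the odd linear part $u'(\theta)(t-\theta)$ exactly (on the symmetric fundamental domain $(\theta-\pi,\theta+\pi)$), and the remaining integrand is dominated by $C|t-\theta|^{\alpha-1}\in L^1$, so the limit and the principal-value formula follow. The $C^{0,\alpha}$ regularity via $\lapfr u=\partial_\theta\mathcal{H}(u)$ and Privalov is a clean way to finish; alternatively one can estimate the singular integral directly by the usual near/far splitting.

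One small point worth tightening: make explicit that the principal value in \eqref{fraclapl8} is taken symmetrically around $t=\theta$, since it is precisely this symmetric truncation that matches the evenness of the kernel and makes the $u'(\theta)(t-\theta)$ contribution vanish identically for each $\varepsilon>0$ (not just in the limit).
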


\subsection{The half-Laplacian on $\R$}

For $u\in \mathcal{S}$ (the Schwarz space of rapidly decaying functions) we set
\begin{equation}\label{fraclapl0}
\widehat{(-\Delta)^\frac{1}{2}u}(\xi)=|\xi|\hat u(\xi),\quad \hat{f}(\xi):=\int_{\R}f(x)e^{-ix\xi}dx.
\end{equation}
One can prove that it holds
\begin{equation}\label{fraclapl}
(-\Delta)^\frac{1}{2} u(x)=\frac{1}{\pi} P.V.\int_{\R{}}\frac{u(x)-u(y)}{(x-y)^2}dy:=\frac{1}{\pi} \lim_{\varepsilon\to 0}\int_{\R{}\setminus [-\ve,\ve]}\frac{u(x)-u(y)}{(x-y)^2}dy,
\end{equation}
from which it follows that 
$$\sup_{x\in \R}|(1+x^2)(-\Delta)^{\frac{1}{2}}\varphi(x)|<\infty,\quad\text{for every }\varphi\in \mathcal{S}\,.$$
Then one can set
\begin{equation}\label{L12}
L_\frac{1}{2}(\R):=\left\{u\in L^1_{\loc}(\R):\int_{\R}\frac{|u(x)|}{1+x^2}dx<\infty   \right\},
\end{equation}
and for every $u\in L_{\frac{1}{2}}(\R)$ one defines the tempered distribution $(-\Delta)^\frac{1}{2}u$ as
\begin{equation}\label{fraclapl2}
\langle (-\Delta)^\frac{1}{2}u,\varphi\rangle :=\int_{\R} u(-\Delta)^\frac{1}{2} \varphi dx =\int_{\R}u\,\mathcal{F}^{-1}(|\xi|\hat \varphi(\xi))\,dx,\quad\text{for every }\varphi \in\mathcal{S}.
\end{equation}
An alternative definition of $\lapfr$ can be given via the Poisson integral. For $u\in L_{\frac{1}{2}}(\R)$ define the Poisson integral
\begin{equation}\label{Poisson2}
\tilde u(x,y):=\frac{1}{\pi}\int_{\R}\frac{yu(y)}{(y^2+(x-\xi)^2)}d\xi, \quad y>0,
\end{equation}
which is harmonic in $\R\times(0,\infty)$ and whose trace on $\R\times\{0\}$ is $u$.
Then we have
\begin{equation}\label{fraclapl3}
\lapfr u =- \frac{\de \tilde u}{\partial y}\bigg|_{y=0},
\end{equation}
where the identity is pointwise if $u$ is regular enough (for instance $C^{1,\alpha}_{\loc}(\R)$), and has to be read in the sense of distributions in general, with
\begin{equation}\label{fraclapl3b}
\bigg\langle -\frac{\de \tilde u}{\partial y}\bigg|_{y=0},\varphi\bigg\rangle:=\bigg\langle u, -\frac{\de \tilde \varphi}{\partial y}\bigg|_{y=0} \bigg\rangle,\quad \varphi\in\mathcal{S},\quad\tilde\varphi\text{ as in \eqref{Poisson2}}.
\end{equation}

More precisely:

\begin{Proposition}\label{lapeq} If $u\in L_{\frac{1}{2}}(\R)\cap C^{1,\alpha}_{\loc}((a,b))$ for some interval $(a,b)\subset\R$ and some $\alpha\in (0,1)$, then the tempered distribution $\lapfr u$ defined in \eqref{fraclapl2} coincides on the interval $(a,b)$ with the functions given by \eqref{fraclapl} and \eqref{fraclapl3}. For general $u\in L_\frac12(\R)$ the definitions \eqref{fraclapl2} and \eqref{fraclapl3} are equivalent, where the right-hand side of \eqref{fraclapl3} is defined by \eqref{fraclapl3b}.
\end{Proposition}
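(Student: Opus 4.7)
\textbf{Proof proposal for Proposition \ref{lapeq}.}

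The plan is to proceed by a cut-off/splitting argument, reducing to the classical Schwartz case where all three definitions of $\lapfr$ are known to coincide pointwise.

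\emph{Step 1: Schwartz case.} For $u\in\mathcal{S}$, I would first verify by direct Fourier computation that the Fourier multiplier definition \eqref{fraclapl0}, the principal-value integral \eqref{fraclapl} and the Poisson-extension boundary derivative \eqref{fraclapl3} all coincide pointwise. Indeed, writing $\tilde u$ as the convolution of $u$ with the Poisson kernel $P_y(x)=\tfrac{1}{\pi}\tfrac{y}{y^2+x^2}$, whose Fourier transform is $e^{-y|\xi|}$, one gets $\widehat{-\partial_y\tilde u|_{y=0}}(\xi)=|\xi|\hat u(\xi)$, matching \eqref{fraclapl0}. The equality with the PV integral is the standard computation using that the singular kernel $\tfrac{1}{\pi}\mathrm{PV}\tfrac{1}{x^2}$ has Fourier symbol $|\xi|$.

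\emph{Step 2: Local $C^{1,\alpha}$ case.} Fix $(a',b')\Subset(a,b)$ and a cut-off $\chi\in C^\infty_c(a,b)$ with $\chi\equiv 1$ on $(a',b')$. Decompose $u=u_1+u_2$ with $u_1:=\chi u\in C^{1,\alpha}_c(\R)$ and $u_2:=(1-\chi)u\in L_{1/2}(\R)$ vanishing on $(a',b')$. For $u_1$, mollify to get $u_1^\ve:=u_1*\rho_\ve\in\mathcal{S}$, apply Step 1, and pass to the limit as $\ve\to 0^+$:
\begin{itemize}
\item The PV expression in \eqref{fraclapl} converges pointwise on $(a',b')$, because the integrand $(u_1(x)-u_1(y))/(x-y)^2$ is dominated by $C|x-y|^{\alpha-1}$ near $y=x$ and by $C/(1+y^2)$ at infinity.
\item The Poisson extension $\widetilde{u_1^\ve}$ converges to $\tilde u_1$ harmonically in the upper half-plane, with convergence in $C^1$ up to the boundary away from the support of $u_1$, yielding the convergence of the normal derivatives on $(a',b')$.
\item As tempered distributions, $\lapfr u_1^\ve\to\lapfr u_1$ in $\mathcal{S}'$.
\end{itemize}
For $u_2$, which vanishes on $(a',b')$, the PV integral reduces to the absolutely convergent integral $\lapfr u_2(x)=-\tfrac{1}{\pi}\int_\R\tfrac{u_2(y)}{(x-y)^2}dy$, a smooth function of $x\in(a',b')$; similarly $\tilde u_2$ is harmonic in a full neighborhood of $(a',b')\times\{0\}$, so $-\partial_y\tilde u_2|_{y=0}$ is a smooth function on $(a',b')$. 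One checks by direct differentiation under the integral sign that these two smooth functions coincide (using that $-\partial_y P_y(x-\cdot)|_{y=0}=\tfrac{1}{\pi(x-\cdot)^2}$ away from the diagonal), and both agree with the distributional $\lapfr u_2$ pared against test functions supported in $(a',b')$.

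\emph{Step 3: General $u\in L_{1/2}(\R)$, equivalence of \eqref{fraclapl2} and \eqref{fraclapl3}.} For any $\varphi\in\mathcal{S}$, Step 1 gives $\lapfr\varphi(x)=-\partial_y\tilde\varphi(x,0)$ pointwise, and this function lies in $\mathcal{S}$ (or at least decays like $\tfrac{1}{1+x^2}$ by \eqref{fraclapl}). Therefore
\[
\langle\lapfr u,\varphi\rangle:=\int_\R u(x)\lapfr\varphi(x)\,dx=\int_\R u(x)\bigl(-\partial_y\tilde\varphi(x,0)\bigr)dx=:\Bigl\langle -\partial_y\tilde u\big|_{y=0},\,\varphi\Bigr\rangle,
\]
where the rightmost equality is precisely the definition \eqref{fraclapl3b}. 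The middle integral converges absolutely because $|u(x)|/(1+x^2)\in L^1$ and $-\partial_y\tilde\varphi(x,0)$ decays like $(1+x^2)^{-1}$.

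\emph{Main obstacle.} The delicate step is Step 2: justifying that the passage to the limit along $u_1^\ve\to u_1$ yields \emph{pointwise} identities on $(a',b')$ for both the PV integral and the Poisson normal derivative, and that these pointwise objects coincide with the distributional $\lapfr u_1$ as a function on $(a',b')$. The $C^{1,\alpha}$ assumption is essential here to make the PV integral classically integrable and to give uniform control on the modulus of continuity of $\nabla\tilde u_1$ up to the boundary, thereby identifying the boundary trace of $-\partial_y\tilde u_1$.
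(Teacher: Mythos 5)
The paper states Proposition~\ref{lapeq} without proof (it is invoked as a known fact, essentially from \cite{DMR}), so there is no ``paper's own proof'' to compare against; I will therefore assess your argument on its merits. Your three-step strategy (Schwartz case, cut-off and mollification for $u_1=\chi u$ plus far-field computation for $u_2=(1-\chi)u$, and a purely distributional duality for the last assertion) is the right one and leads to a correct proof. However, two of your supporting claims are wrong as stated and need repair.

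First, the domination you invoke for the PV integral of $u_1$ is incorrect: for $u_1\in C^{1,\alpha}_c(\R)$ the integrand $(u_1(x)-u_1(y))/(x-y)^2$ is only $O(|x-y|^{-1})$ near the diagonal, which is not integrable, so there is no pointwise dominating function of the form $C|x-y|^{\alpha-1}$. Convergence of the principal value relies on the symmetric cancellation: one must subtract the odd term $u_1'(x)(x-y)/(x-y)^2$, which the PV annihilates, and only the remainder $u_1(x)-u_1(y)-u_1'(x)(x-y)=O(|x-y|^{1+\alpha})$ yields the $|x-y|^{\alpha-1}$ bound. The same remark is needed to obtain a dominating function \emph{uniform in $\ve$} for the mollified family $u_1^\ve$; this works because $[u_1^\ve]_{C^{1,\alpha}}\le[u_1]_{C^{1,\alpha}}$.

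Second, the statement ``convergence in $C^1$ up to the boundary away from the support of $u_1$'' is inconsistent with what you need: the arc $(a',b')$ lies \emph{inside} $\mathrm{supp}\,u_1$. The correct assertion is that since $u_1\in C^{1,\alpha}_c(\R)$, its Poisson extension is $C^{1,\alpha}$ up to the \emph{entire} boundary $\R\times\{0\}$ (Schauder up to the boundary for the half-plane), and $\widetilde{u_1^\ve}\to\widetilde{u_1}$ in $C^1$ locally up to the boundary on all of $\R$; restricting to $(a',b')$ then gives the normal-derivative convergence. With these two corrections, the rest of the argument — the far-field treatment of $u_2$ (absolutely convergent kernel, reflection across $(a',b')\times\{0\}$, differentiation under the integral sign) and the purely dual Step~3 — is sound, and it would be worth adding one sentence at the end of Step~2 noting that, since $\lapfr u_1^\ve\to\lapfr u_1$ in $\mathcal{S}'$ and also pointwise on $(a',b')$ with a uniform bound, the distributional and pointwise limits coincide on $(a',b')$.
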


\begin{Lemma}[Fundamental solution of $(-\Delta)^\frac12$ on $S^1$]\label{lemmafund4} The function
$$G(\theta):=-\frac{1}{2\pi}\log(2(1-\cos (\theta)))$$
belongs to $ BMO(S^1)$,
can be decomposed as
\begin{equation}\label{Gdec}
G(\theta)=\frac{1}{\pi}\log\frac{\pi}{|\theta|} +H(\theta),\quad \theta\in [-\pi,\pi]\sim S^1, \quad \text{with }H\in C^0(S^1),
\end{equation}
and satisfies
\begin{equation}\label{eqF}
(-\Delta)^\frac12 G=\delta_1-\frac{1}{2\pi} \quad \text{in }S^1,\quad \int_{S^1}G(\theta)d\theta=0,
\end{equation}
and for every function $u\in L^1(S^1)$ with $(-\Delta)^\frac12 u\in L^1(S^1)$ one has
\begin{equation}\label{rapr}
u-\bar u = G*(-\Delta)^\frac{1}{2}u:=\int_{S^1} G(\cdot -\theta)(-\Delta)^\frac{1}{2}u(\theta)d\theta, \quad \text{for almost every }t\in S^1.
\end{equation}
\end{Lemma}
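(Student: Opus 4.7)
The plan is to reduce everything to the explicit Fourier series of $G$, after first extracting the singularity.

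\emph{Step 1 (decomposition and BMO).} I would begin by rewriting $G$ using the identity $1-\cos\theta=2\sin^2(\theta/2)$, which gives
\[
G(\theta)=-\frac{1}{\pi}\log\bigl(2|\sin(\theta/2)|\bigr).
\]
Adding and subtracting $\frac{1}{\pi}\log(\pi/|\theta|)$ yields the claimed decomposition with
\[
H(\theta)=\frac{1}{\pi}\log\frac{|\theta|}{2\pi|\sin(\theta/2)|},\qquad \theta\in[-\pi,\pi].
\]
Since $|\theta|/(2|\sin(\theta/2)|)\to 1$ as $\theta\to 0$, the function $H$ extends continuously to $0$, and away from $0$ it is manifestly smooth, so $H\in C^0(S^1)$. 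The $BMO(S^1)$ statement then follows because $\log(\pi/|\theta|)$ is the standard $BMO$ log and $H\in L^\infty\subset BMO$.

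\emph{Step 2 (Fourier series and mean zero).} Next I would invoke the classical expansion
\[
-\log\bigl|1-e^{i\theta}\bigr|=\sum_{n=1}^{\infty}\frac{\cos(n\theta)}{n},
\]
(valid in $L^2(S^1)$, hence a fortiori distributionally), which, since $|1-e^{i\theta}|=2|\sin(\theta/2)|$, gives
\[
G(\theta)=\frac{1}{\pi}\sum_{n=1}^{\infty}\frac{\cos(n\theta)}{n}=\frac{1}{2\pi}\sum_{n\ne 0}\frac{e^{in\theta}}{|n|}.
\]
Reading off Fourier coefficients in the convention of Section \ref{fraclaps1}, $\hat G(0)=0$ and $\hat G(n)=\frac{1}{2\pi|n|}$ for $n\ne 0$. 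The vanishing of $\hat G(0)$ is exactly $\int_{S^1}G\,d\theta=0$.

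\emph{Step 3 (the equation).} Now I would apply \eqref{fraclapl5} (in the distributional sense \eqref{fraclapl6}) to get
\[
(-\Delta)^{1/2}G=\sum_{n\in\mathbb Z}|n|\hat G(n)\,e^{in\theta}=\frac{1}{2\pi}\sum_{n\ne 0}e^{in\theta}=\delta_1-\frac{1}{2\pi}\quad\text{in }\mathcal D'(S^1),
\]
using the Dirac comb identity $\sum_{n\in\mathbb Z}e^{in\theta}=2\pi\delta_1$ on $S^1$ (the base point $1=e^{i\cdot 0}$ matches the convention of the lemma).

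\emph{Step 4 (representation formula).} For $u\in L^1(S^1)$ with $f:=(-\Delta)^{1/2}u\in L^1(S^1)$, set $v:=G*f\in L^1(S^1)$ (well-defined and in $L^1$ by Young's inequality since $G\in L^1$). A direct Fubini computation of Fourier coefficients gives $\widehat{G*f}(n)=2\pi\hat G(n)\hat f(n)$; combining this with $\hat f(n)=|n|\hat u(n)$ (which is the Fourier version of the definition of $\lapfr u$) yields
\[
\widehat{G*f}(n)=\hat u(n)\quad(n\ne 0),\qquad \widehat{G*f}(0)=0=\widehat{u-\bar u}(0).
\]
By uniqueness of Fourier coefficients in $L^1$, $G*f=u-\bar u$ a.e., which is \eqref{rapr}.

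\emph{Main obstacle.} The only mildly subtle point is justifying the Fourier identity for $G$ and the convolution computation at the level of distributions/$L^1$ rather than smooth functions; the cleanest route is to establish $\widehat{G*f}(n)=2\pi\hat G(n)\hat f(n)$ by Fubini, which is legal because $G,f\in L^1$, and to interpret the distributional equation $\lapfr G=\delta_1-\frac{1}{2\pi}$ by pairing with $\varphi\in C^\infty(S^1)$ using \eqref{fraclapl6} and the absolute convergence of $\sum|n|\hat G(n)\hat\varphi(-n)$ (guaranteed by the rapid decay of $\hat\varphi$). Everything else is routine.
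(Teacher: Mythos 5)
Your proof is correct, and it follows the natural Fourier-analytic approach one would expect for this classical fact. The paper itself does not present a proof — Lemma \ref{lemmafund4} is stated as a standard background result in the appendix — so there is nothing in the source text to compare against step by step. For the record, the key facts you use all check out under the paper's conventions ($\hat u(n)=\frac{1}{2\pi}\int_{S^1}u\,e^{-in\theta}d\theta$, convolution without a $\frac{1}{2\pi}$ factor, length of $S^1$ equal to $2\pi$): the identity $G(\theta)=-\tfrac{1}{\pi}\log(2|\sin(\theta/2)|)$, the expansion $G=\tfrac{1}{2\pi}\sum_{n\ne 0}|n|^{-1}e^{in\theta}$ giving $\hat G(n)=\tfrac{1}{2\pi|n|}$, the ensuing $\widehat{G*f}(n)=2\pi\hat G(n)\hat f(n)$ by Fubini since $G,f\in L^1$, and the compatibility $\hat f(n)=|n|\hat u(n)$ obtained by pairing the distribution $\lapfr u$ with $e^{-in\theta}$ via \eqref{fraclapl6}. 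One small point worth mentioning explicitly: the conclusion $G*f=u-\bar u$ a.e. from coincidence of Fourier coefficients invokes uniqueness for $L^1(S^1)$, which is legitimate but should be named; you essentially do this. Also, you never actually need Young's inequality beyond ``$L^1*L^1\subset L^1$'', which is fine as stated.
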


\begin{Theorem}\label{MT4} There exist constants $C_1,C_2>0$ such that for any $\ve\in (0,\pi)$ one has
\begin{equation}\label{stimaMT4}
C_1\le  \sup_{u=G*f\,:\, \|f\|_{L^1(S^1)}\le 1}\ve \int_{S^1}e^{(\pi-\ve)|u|}  d\theta \le C_2,
\end{equation}
and in particular
\begin{equation}\label{stimaMT4bis}
C_1\le  \sup_{\stackrel{u\in L^1 (S^1):\, \|(-\Delta)^{1/2} u-\alpha\|_{L^1(S^1)}\le 1 }{\text{for some }\alpha\in\R}}\ve \int_{S^1}e^{(\pi-\ve)|u-\bar u|}  d\theta \le C_2.
\end{equation}
\end{Theorem}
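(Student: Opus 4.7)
The plan is to prove the two estimates in \eqref{stimaMT4} directly using the logarithmic decomposition \eqref{Gdec} of the Green function, and then to deduce \eqref{stimaMT4bis} from \eqref{stimaMT4} via the reproducing identity \eqref{rapr}.

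\emph{Upper bound in \eqref{stimaMT4}.} Given $u=G*f$ with $\|f\|_{L^1}\le 1$ and $f\not\equiv 0$, let $d\mu(t):=|f(t)|\,dt/\|f\|_{L^1}$ be the associated probability measure on $S^1$. Using $\|f\|_{L^1}\le 1$,
$$(\pi-\ve)|u(\theta)|\le (\pi-\ve)\int_{S^1}|G(\theta-t)|\,d\mu(t),$$
and Jensen's inequality applied to $e^x$ and $d\mu$ gives $e^{(\pi-\ve)|u(\theta)|}\le \int_{S^1} e^{(\pi-\ve)|G(\theta-t)|}d\mu(t)$. By \eqref{Gdec} the integrand is bounded by $C|\theta-t|^{-(\pi-\ve)/\pi}$ for a constant $C$ depending only on $\|H\|_{L^\infty}$. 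Integrating in $\theta$, exchanging integrals via Fubini, and using the explicit formula
$$\int_{-\pi}^{\pi}|\theta|^{-(\pi-\ve)/\pi}\,d\theta = \frac{2\pi^{1+\ve/\pi}}{\ve}\le \frac{2\pi^2}{\ve},$$
yields $\ve\int_{S^1}e^{(\pi-\ve)|u|}\,d\theta\le C_2$ independent of $\ve$ and of $u$.

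\emph{Lower bound in \eqref{stimaMT4}.} I would use the Moser-type test function $\delta_\ve:=e^{-\pi/\ve}$, $f_\ve:=\chi_{[-\delta_\ve,\delta_\ve]}/(2\delta_\ve)$, $u_\ve:=G*f_\ve$, with $\|f_\ve\|_{L^1}=1$. For $2\delta_\ve\le |\theta|\le 3/4$ and $|t|\le \delta_\ve$ one has $|\theta-t|\le 3|\theta|/2<\pi$, so by \eqref{Gdec}
$$G(\theta-t)\ge \frac{1}{\pi}\log\frac{1}{|\theta|} - C'$$
with $C'$ depending only on $\|H\|_{L^\infty}$; averaging in $t$ gives the same bound for $u_\ve$. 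Thus
$$\int_{2\delta_\ve}^{3/4}e^{(\pi-\ve)|u_\ve|}\,d\theta \ge \frac{\pi}{\ve}e^{-(\pi-\ve)C'}\Bigl((3/4)^{\ve/\pi}-(2\delta_\ve)^{\ve/\pi}\Bigr),$$
and the choice $\delta_\ve=e^{-\pi/\ve}$ makes $(2\delta_\ve)^{\ve/\pi}=2^{\ve/\pi}/e$. An easy monotonicity check shows that $g(\ve):=(3/4)^{\ve/\pi}-2^{\ve/\pi}/e$ is strictly decreasing on $[0,\pi]$ with minimum $g(\pi)=3/4-2/e>0$, so $g(\ve)\ge g(\pi)>0$ uniformly in $\ve\in(0,\pi)$. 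Multiplying by $\ve$ produces a positive constant $C_1$ independent of $\ve$.

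\emph{Passage from \eqref{stimaMT4} to \eqref{stimaMT4bis} and main obstacle.} If $\|\lapfr u-\alpha\|_{L^1}\le 1$, then $\lapfr u\in L^1(S^1)$ and \eqref{rapr}, combined with $\int_{S^1}G\,d\theta=0$ from Lemma \ref{lemmafund4}, yields
$$u-\bar u=G*\lapfr u=G*(\lapfr u-\alpha),$$
so $f:=\lapfr u-\alpha$ satisfies the hypotheses of \eqref{stimaMT4} and the upper bound in \eqref{stimaMT4bis} follows at once. For the lower bound, applying $\lapfr$ to $u_\ve=G*f_\ve$ and using $\lapfr G=\delta_1-1/(2\pi)$ gives $\lapfr u_\ve=f_\ve-1/(2\pi)$; choosing $\alpha=-1/(2\pi)$ yields $\|\lapfr u_\ve-\alpha\|_{L^1}=1$ and $\bar u_\ve=0$ (since $\bar G=0$), so the same extremal family realizes the lower bound. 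The delicate point is the choice of the Moser scale $\delta_\ve=e^{-\pi/\ve}$: it is precisely the scale at which the exponent $\pi$ in \eqref{stimaMT4} becomes sharp, since $(2\delta_\ve)^{\ve/\pi}$ stays bounded away from $1$ uniformly in $\ve$ only thanks to this exact matching of scales.
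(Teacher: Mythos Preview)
The paper states Theorem~\ref{MT4} in the appendix without proof, presumably treating it as known (a version appears in \cite{DMR}), so there is no argument in the paper to compare against. Your proof is correct and self-contained.

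Your upper bound is the standard Jensen-plus-logarithmic-kernel argument: the decomposition \eqref{Gdec} gives $e^{(\pi-\ve)|G(\theta)|}\le C|\theta|^{-(\pi-\ve)/\pi}$, and Jensen with respect to the probability measure $|f|\,dt/\|f\|_{L^1}$ reduces matters to integrating this power, which produces exactly the $1/\ve$ factor. Your lower bound via the normalised indicator $f_\ve$ is also the natural extremiser; the only point worth a remark is that you use $e^{(\pi-\ve)|u_\ve|}\ge e^{(\pi-\ve)u_\ve}$ (valid regardless of the sign of $u_\ve$), so no positivity of $u_\ve$ is needed. The numerical check $g(\pi)=3/4-2/e\approx 0.014>0$ is correct, and the monotonicity of $g$ follows since both $(3/4)^{\ve/\pi}\log(3/4)$ and $-2^{\ve/\pi}\log 2/e$ are negative. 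The passage to \eqref{stimaMT4bis} via \eqref{rapr} and $\int_{S^1}G=0$ is clean; note in particular that $\lapfr u\in L^1(S^1)$ is automatic once $\lapfr u-\alpha\in L^1$ and $S^1$ has finite measure, so \eqref{rapr} applies.
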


 \subsection{Useful results from complex analysis}

\begin{Lemma}\label{lemmahconst} Let $h\in C^0(\bar D^2,\mathbb{C})$ be holomorphic in $D^2$ with $h(S^1)\subset S^1$ and $0\not\in h( D^2)$. Then $h$ is constant.
\end{Lemma}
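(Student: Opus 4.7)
The plan is to show $h$ has constant modulus on $\bar D^2$, from which holomorphicity forces $h$ to be constant. The key observation is that the hypotheses $0 \notin h(D^2)$ and $h(S^1) \subset S^1$ together rule out both the possibility that $|h|$ vanishes inside and the possibility that $|h|$ is non-constant at the boundary.

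First I would pass to a logarithm. Since $D^2$ is simply connected and $h$ is holomorphic and nowhere zero on $D^2$, there exists a holomorphic function $g \colon D^2 \to \C$ with $e^g = h$. Setting $u := \Re g = \log|h|$, the function $u$ is harmonic on $D^2$. Moreover, because $h \in C^0(\bar D^2,\C)$ and $|h| \geq 1$ near $S^1$ (using continuity together with $|h|=1$ on $S^1$), there is a neighborhood of $S^1$ in $\bar D^2$ on which $|h|$ is bounded away from $0$; combined with $0 \notin h(D^2)$ this gives $|h| > 0$ on all of $\bar D^2$, so $u$ extends continuously to $\bar D^2$ with boundary values $u|_{S^1} = \log 1 = 0$.

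Next I apply the maximum principle: $u$ is harmonic on $D^2$, continuous on $\bar D^2$, and vanishes on $\partial D^2 = S^1$, hence $u \equiv 0$ on $\bar D^2$. This means $|h| \equiv 1$ throughout $\bar D^2$, so $h(D^2) \subset S^1$. Finally, a non-constant holomorphic function is an open map, so $h(D^2)$ would have to be an open subset of $\C$; since $S^1$ has empty interior in $\C$, this forces $h$ to be constant. (Alternatively, from $|h|^2 = h\bar h \equiv 1$ one gets $\bar h = 1/h$ holomorphic, and a function that is both holomorphic and antiholomorphic on a connected open set is constant.)

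There is no real obstacle here beyond being careful that the logarithm is defined on $D^2$ (which uses simple connectedness and $h$ nonvanishing on $D^2$) and that $u$ indeed extends continuously up to $S^1$ with the correct boundary values (which uses $h \in C^0(\bar D^2)$ and $h(S^1) \subset S^1$, so in particular $|h|$ stays away from $0$ near $S^1$).
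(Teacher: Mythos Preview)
Your argument is correct. The paper itself does not prove this lemma; it is stated in the appendix as a known fact and immediately followed by the remark that Lemma~\ref{lemmabla} (Burckel's Blaschke-product characterization) is a generalization of it. Your approach via $\log|h|$, the maximum principle, and the open mapping theorem is the standard elementary route.

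One small inaccuracy: you write ``$|h|\ge 1$ near $S^1$'' as a consequence of continuity and $|h|=1$ on $S^1$, but continuity only gives $|h|$ \emph{close to} $1$ near $S^1$, not $\ge 1$. This does not affect the argument, since the conclusion you actually use---that $|h|$ is bounded away from $0$ near $S^1$---follows just as well. In fact the whole neighborhood discussion is unnecessary: $0\notin h(D^2)$ gives $|h|>0$ on $D^2$, and $h(S^1)\subset S^1$ gives $|h|=1>0$ on $S^1$, so $|h|>0$ on the compact set $\bar D^2$; continuity of $h$ then makes $\log|h|$ continuous on $\bar D^2$ directly.
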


The following is a generalization of Lemma \ref{lemmahconst}.

\begin{Lemma}[Burckel \cite{bur}]\label{lemmabla} Let $h\in C^0(\bar D^2,\mathbb{C})$ be holomorphic in $D^2$ with $h(S^1)\subset S^1$ and $\deg h|_{S^1}=n\ge 0$. Then $h$ is a Blaschke product of degree $n$, i.e.
$$h(z)=e^{i\theta_0}\prod_{k=1}^{n} \frac{z-a_k}{1-\bar a_k z},\quad a_1,\dots, a_n \in D^2,\;\theta_0\in\R.$$
\end{Lemma}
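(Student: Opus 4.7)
The plan is to reduce to Lemma \ref{lemmahconst} by dividing $h$ by an appropriate Blaschke product that absorbs its zeros. First I would locate the zeros of $h$ in $D^2$ using the argument principle: since $h \in C^0(\bar D^2)$ with $h(S^1)\subset S^1$, $h$ is nonvanishing on $\partial D^2$, and the number of zeros of $h$ in $D^2$ (counted with multiplicity) equals the winding number of $h|_{S^1}$ around $0$. Because $h(S^1)\subset S^1$, this winding number is precisely $\deg h|_{S^1}=n$. Denote the zeros by $a_1,\dots,a_n\in D^2$, repeated according to multiplicity.

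Next I would form the finite Blaschke product
\begin{equation*}
B(z):=\prod_{k=1}^{n}\frac{z-a_k}{1-\bar a_k z},
\end{equation*}
which is holomorphic on a neighborhood of $\bar D^2$ (each factor $1-\bar a_k z$ is nonzero on $\bar D^2$ since $|a_k|<1$), satisfies $|B(z)|=1$ for $z\in S^1$, and has exactly the same zeros as $h$ in $D^2$ with the same multiplicities. Consider the quotient $g:=h/B$. Each $a_k$ is a removable singularity (the order of vanishing matches), so $g$ extends holomorphically to all of $D^2$. Since $B$ does not vanish on $S^1$ and $h,B$ are both continuous on $\bar D^2$, $g$ is continuous on $\bar D^2\setminus\{a_1,\dots,a_n\}$, and holomorphicity at the $a_k$ yields continuity there too. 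Hence $g\in C^0(\bar D^2,\C)$, holomorphic in $D^2$, with $|g|=|h|/|B|=1$ on $S^1$.

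Now I would verify the hypotheses of Lemma \ref{lemmahconst} for $g$. We already have $g(S^1)\subset S^1$. For non-vanishing in $D^2$: the zeros of $h$ have been cancelled by construction, so $g$ never vanishes in $D^2$, and thus $0\notin g(D^2)$. Lemma \ref{lemmahconst} therefore applies and yields $g\equiv c$ for some constant $c\in\C$; since $|g|=1$ on $S^1$, $c=e^{i\theta_0}$ for some $\theta_0\in\R$. This gives $h(z)=e^{i\theta_0}B(z)$, which is the claimed Blaschke product of degree $n$.

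The only delicate step is justifying that $g=h/B$ is genuinely continuous up to $\bar D^2$ and has no interior zeros, which is where the matching between the zero multiplicities of $h$ (produced by the argument principle) and those of $B$ (built into its definition) is essential. Once that matching is in place, everything reduces to the non-vanishing case handled by Lemma \ref{lemmahconst}, so this is not a serious obstacle but merely a bookkeeping check.
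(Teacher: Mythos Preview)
Your argument is correct and is in fact the standard proof of this classical result. Note, however, that the paper does not give its own proof of this lemma: it is stated with a reference to Burckel \cite{bur} and used as a black box. So there is nothing to compare against beyond observing that your reduction to Lemma \ref{lemmahconst} via the argument principle and division by the Blaschke product built from the zeros of $h$ is exactly the route one would expect, and all the steps (the zero count equals the boundary degree, the removability of the singularities of $h/B$, the continuity up to $\bar D^2$, and the application of Lemma \ref{lemmahconst}) are sound.
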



     \end{document}